\newtheorem{theorem}{Theorem}[section]
\newtheorem  {lemma}[theorem]         {Lemma}
\newtheorem  {prop}[theorem]   {Proposition}
\newtheorem  {defi}[theorem]    {Definition}
\newtheorem {notation}[theorem]    {Notation}
\newtheorem* {notation*}    {Notation}
\newtheorem {algorithm}[theorem]    {Algorithm}
\newtheorem* {theorem*}      {Theorem}
\newtheorem* {lemma*}        {Lemma}
\newtheorem* {corollary*}    {Corollary}
\newtheorem* {prop*}  {Proposition}
\newtheorem* {defi*}   {Definition}
\newtheorem* {remark*}       {Remark}
\newtheorem  {remark}[theorem]        {Remark}
\newtheorem* {claim*}        {Claim}
\newtheorem* {assum*}        {Assumption}
\newtheorem  {assum}[theorem]        {Assumption}
\def \eps{\epsilon}
\def \Z {\mathbb{Z}}
\def \N {\mathbb{N}}
\def \P {\mathbb P}
\def \E {\mathbb E}
\def \var {\text{Var}}
\def \cov {\text{Cov}}
\def \unif {\mathrm{Unif}}
\def \ber {\mathrm{Ber}}
\def \Lvert {\middle|\:}
\def \given {\;|\;}
\def \Given {\;\Lvert\;}
\def \wst {\mathsf{WST}}
\def \mst {\mathsf{MST}}
\def \maxst {\mathsf{MaxST}}
\def \ust {\mathsf{UST}}
\def \path {\mathrm{Path}}
\def \diam {\mathrm{Diam}}
\def \lp {\mathrm{lp}}
\def \c {\boldsymbol{\mathit{c}}}
\def \C {\mathcal{C}}
\def \bcond{\mathrm{bigc}}
\def \mcond{\mathrm{mediumc}}
\def \scond{\mathrm{smallc}}
\author{\'Agnes K\'usz \footnote{Department of Stochastics, Institute of Mathematics, Budapest University of Technology and Economics, M\H{u}egyetem rkp.~3., Budapest 1111 Hungary. HUN-REN Alfr\'ed R\'enyi Institute of Mathematics, Re\'altanoda u. 13-15, Budapest 1053 Hungary. kusza [at] renyi.hu, kuszagnes1205 [at] gmail.com}}
\title{The diameter of random spanning trees interpolating between the UST and the MST of the complete graph}
\date{}
\begin{document}
\maketitle

\begin{abstract}
We introduce $\wst^{\beta_n}(K_n)$ as the weighted spanning tree of the complete graph $K_n$ w.r.t.~the random electric network of conductances $\{\exp(-\beta_nU_{e})\}_{e\in E(K_n)}$ with $\unif[0,1]$ i.i.d.~$U_e$'s. 

Moving from $\beta_n\equiv 0$ to faster and faster growing $\beta_n$'s, the model interpolates between the \emph{uniform} and the \emph{minimum} spanning trees: $\wst^{0}(K_n)=\ust(K_n)$, and there are phase transitions for $\wst^{\beta_n}(K_n)$ behaving more and more like
$\mst(K_n)$
\begin{itemize}
    \item[] around $\beta_n=n^{3+o(1)}$ regarding the agreement of the two standard algorithms generating these models : Aldous-Broder and Prim's invasion algorithms, 
    \item[] around $\beta_n=n^{2+o(1)}$ regarding the models consisting of exactly the same edges, and 
    \item[] around $\beta_n=n^{1+o(1)}$ regarding the expected total length $\E\left[\sum_{e\in \wst^{\beta_n}(K_n)}U_e\right]$.
\end{itemize}

But most importantly, we study the global geometry of the model: we prove that the typical diameter of $\wst^{\beta_n}(K_n)$ grows like $\Theta(n^{1/3})$ for $\beta_n\ge n^{4/3+o(1)}$ likewise the $\mst(K_n)$ case, and it grows like $\Theta(n^{1/2})$ for $\beta_n\le n^{1+o(1)}$ similarly to the $\ust(K_n)$ case. For $\beta_n=n^{\alpha}$ with $1<\alpha<4/3$, the behavior of the typical diameter is a more delicate open question, but we conjecture that its exponent strictly between 1/2 and 1/3.
\end{abstract}

\tableofcontents

\section{Introduction}\label{sec:intro}

\subsection{Motivation for the model}

A spanning tree of a graph $G$ is a connected subgraph of $G$ which does not contain any cycles. We introduce the following random spanning tree model.

\begin{defi}\label{def:wst_beta}
We consider a graph $G$ and $\beta\ge 0$. Given some i.i.d.~$\unif[0,1]$ random variables $\mathbf{U}:=\{U_e\}_{e\in E(G)}$, the probability of a spanning tree $T$ of $G$ under the 
 \emph{weighted spanning tree} model $\wst^{\beta}(G)$ is defined as
 \begin{align*}
     \P\left(\wst^{\beta}(G)=T\Given\mathbf{U}\right)&:= \frac{\exp\left(-\beta\sum_{e\in T}U_e\right)}{Z_{\beta}(G\given \mathbf{U})} \text{, where }\ \\
     Z_{\beta}(G\given \mathbf{U})&:=\sum_{\substack{T'\text{ is a spanning }\\\text{ tree of } G}}\exp\left(-\beta\sum_{e\in T'}U_e\right).
 \end{align*}

In this paper, we always choose $G$ to be the complete graph $K_n$ on $n$ vertices, $\beta=\beta_n$ dependent on $n$, and we often abbreviate $\wst^{\beta_n}(K_n)$ as $\wst_n^{\beta_n}$.

We refer to $U_e$ as the label of $e$ and to $\exp(-\beta_n U_e)$ as the weight or the conductance of $e$ in our random electric network.
\end{defi}

Recall the definition of the uniform spanning tree model.
\begin{defi}
    For a finite, connected graph $G$, the uniform spanning tree $\ust(G)$ is defined as the uniform distribution on the set of spanning trees of $G$. We often write
    $\ust_n:=\ust(K_n)$.
\end{defi}

For $\beta=0$, the model $\wst^{0}(G)$ becomes $\ust(G)$ by definition. Many properties of $\ust(G)$ are understood for several graphs $G$. 

We are especially interested in $\diam(\ust(G))$, which was firstly studied in \cite{szekeres1983distribution} for $K_n$, where the limiting distribution of $\diam(\ust(K_n))/n^{1/2}$ was determined. For several graphs $G$ other than $K_n$, it has been shown that the diameter of $\ust(G)$ is typically of order $|V(G)|^{1/2}$. For example, for any regular $G$ with a spectral gap separated from 0, $\E[\diam(\ust(G))]$ is of order $|V(G)|^{1/2}$ up to a polylogarithmic factor \cite{aldous, chung2012diameter}. For the $d$-dimensional torus $\Z^d_n$ with $d\ge 5$, the distance w.r.t.~$\ust(\Z^d_n)$ between two uniformly chosen vertices is $\Theta(|V(\Z^d_n)|^{1/2})$, and the joint distribution of the distances of $k$ uniformly chosen vertices is also understood \cite{peres2004scaling}. Recently, the typical diameter of $\ust(G)$ has been proven to be $|V(G)|^{1/2}$---in the sense of tightness of $\diam(\ust(G))/|V(G)|^{1/2}$ as $|V(G)|\rightarrow\infty$---for any graph with linear minimum degree \cite{alon2022diameter} and for any high dimensional graph \cite{michaeli2021diameter}, including balanced expanders, tori of dimension $d\ge 5$ and hypercubes.

Understanding the order of the typical diameter is essential for scaling limit results w.r.t.~the Gromov-Hausdorff
topology. For $G=K_n$, the scaling limit of $\ust(G)$ is the \emph{Continuum Random Tree} introduced by Aldous \cite{aldous1991continuumI, aldous1991continuumII, aldous1993continuum}, and this scaling limit is universal for high dimensional vertex transitive graphs \cite{archer2024ghpHigh} and for dense graphs \cite{archer2024ghpDense}. 

Complementing the global perspective, the local limit of $\ust(G)$ for many graphs has also been determined: it is the Poisson(1) branching process conditioned to survive forever; see, e.g.,~in \cite{hladky2018local, nachmias2022local}. 

Recently in \cite{makowiec2023diameter}, for i.i.d.~edge weights of any fixed distribution, it has been proven that the typical diameter of the weighted spanning tree of $G$ is of order $|V(G)|^{1/2+o(1)}$ if $G$ is a bounded degree expander or a finite box of $\Z^d$ with $d\ge 5$. Their tool was to generalize the results of \cite{michaeli2021diameter} to the $\wst$ on any electric network satisfying the analogous high dimensionality conditions. 

In \cite{makowiec2023diameter}, they worked on high dimensional bounded degree graphs, and the emphasis was on the existence of a large set of edges of comparable weights such that the effect of the incomparable edges on the typical diameter was small, resulting in similar behavior as for the $\ust$. On $K_n$, since the degrees tend to infinity, there can be more edges with large weights making it impossible to handle the diameter of $\wst$ this way: for large $\beta_n$'s, we will see later that $\wst^{\beta_n}(K_n)$ behaves unlike the $\ust$ but like the $\mst$, defined below.

Very recently, papers \cite{makowiec2024diameter, makowiec2024local} with some of their questions and results overlapping with ours have been uploaded to arXiv. See Subsection \ref{subsec:similar_papers}.

\begin{defi}\label{def:mst}
Consider a graph $G$ and some i.i.d.~$\unif[0,1]$ random variables $\mathbf{U}:=\{U_e\}_{e\in E(G)}$. Given $\mathbf{U}$, the \emph{minimum spanning tree} $\mst(G)$ is the spanning tree $T$ of $G$ which minimizes $\sum_{e\in T}U_e$. We often abbreviate $\mst_n:=\mst(K_n)$.
\end{defi}
Observe that $\mst_n$ is well-defined since, almost surely, all the $\sum_{e\in T}U_e$'s are different as $T$ runs on the spanning trees of $K_n$.

The properties of $\mst$ really differ from the properties of the $\ust$. The expected diameter $\E[\diam(\mst(K_n))]$ grows like $\Theta(n^{1/3})$ \cite{addario2009critical}. Moreover, the Gromov-Hausdorff scaling limit \cite{addario2017scaling, addario2021geometry}, and the local limit \cite{addario2013local} of $\mst(K_n)$ differ from the ones for $\ust(K_n)$. The asymptotics of its expected total length $\E\left[\sum_{e\in \mst_n}U_e\right]$ were shown to be $\zeta(3)$ \cite{frieze1985value, aldous1990random, steele2002minimal}.

For any fixed $\ell$ and $\beta_n\rightarrow\infty$, we have that $\P(\wst^{\beta_n}_{\ell}=\mst_{\ell})\rightarrow 1$ as $n\rightarrow \infty$, since the weights assigned by $\wst^{\beta_n}_{\ell}$ to the spanning trees of $K_{\ell}$ become more and more different. In this paper we study the following question: as $n\rightarrow\infty$, how fast should $\beta_n$ tend to infinity to make the behavior of $\wst^{\beta_n}_n$ and $\mst_n$ similar?

The $\wst$ and the $\mst$ are related to different areas of probability theory, making our question even more exciting. The weighted spanning trees can be generated nicely using random walks on electric networks, so the behavior of the $\wst$ can be handled by understanding the random walk trajectories and some properties of the underlying electric network. On the other hand, the properties of the $\mst$ are deeply connected to the component structure of the Erdős-Rényi graphs. 

In this paper, we compare $\wst_n^{\beta_n}$ and $\mst_n$ by determining how fast $\beta_n$ should tend to infinity to ensure that the random walk on our random electric network follows the component structure of the Erdős-Rényi graphs in different senses.

\subsection{Main results and open questions}
We generate $\wst_n^{\beta_n}$ by the Aldous-Broder algorithm \cite{aldous, broder1989generating} on the random electric network of conductances $\{\exp(-\beta_nU_e)\}_{e\in E(K_n)}$: given i.i.d.~labels $U_e$, we start a random walk from any vertex $s\in V(K_n)$ with transition probabilities $p(x,y):=\frac{\exp(-\beta_nU_{(x,y)})}{\sum_{v\in V(K_n)\setminus\{x\}}\exp(-\beta_nU_{(x,v)})}$. We build a spanning tree started from the subtree $\{s\}$ containing only one vertex. Whenever the random walk steps to a vertex which has not been visited before, it adds the vertex and the current edge---the edge through which the new vertex is first explored.

We generate $\mst_n$ by Prim's invasion algorithm \cite{prim1957shortest}: starting from the tree $T_0:=\{s\}$, $T_{k+1}$ is obtained from $T_k$ by adding the edge with minimal $U_e$ running between $V(T_k)$ and $V(K_n)\setminus V(T_k)$. 

An introduction to these models and algorithms on general electric networks is given in Subsection \ref{subsec:det_intro}.

If $\beta_n$ is large, then both the Aldous-Broder and Prim's invasion algorithms connect a new vertex to the current subtree by an edge with small $U_e$ in each step. By determining how different the labels of the edges built by these two algorithms can be, we compare the properties of $\wst_n^{\beta_n}$ and $\mst_n$. We remark that, although our approach gives strong results by the theoretical study of the random walk, the computer simulations of the Aldous-Broder algorithm are extremely slow if the conductances are of very different order, since the cover time of the electric network becomes huge.

\paragraph{Agreement of the algorithms.} Around $\beta_n=n^{3+o(1)}$, there is a phase transition regarding whether the probability of the two algorithms adding the same edge in every step tends to 0 or 1.

\begin{theorem} \label{thm:ab_vs_inv}
We consider some i.i.d.~$\unif[0,1]$ random variables $\mathbf{U}:=\{U_e\}_{e\in E(K_n)}$, and $s\in V(K_n)$. We denote by $f_i$ and $g_i$ the $i^{th}$ edges added by the Aldous-Broder and Prim's invasion algorithms respectively, and by $\P_s(\cdot)$ the coupling between these algorithms started at $s$ obtained by using the same $\mathbf{U}$. 

Then the following holds.
\begin{itemize}
\item[a)] If $\beta_n\gg n^{3}\log n$, then the Aldous-Broder and Prim's invasion algorithms add the same edges in every step with probability $\rightarrow 1$, i.e.,
$$\P_{s}(f_{i}= g_{i}\, \forall i\in \{1, \ldots, n-1\})=1.$$
\item[b)] If $\beta_n\ll n^{3}$, then the two algorithms typically differ in some steps, i.e.,
$$\P_{s}(f_{i}= g_{i}\, \forall i\in \{1, \ldots, n-1\})=0.$$
\end{itemize}
\end{theorem}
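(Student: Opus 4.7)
}
The plan is to compare, step by step, the edges added by each algorithm under the common coupling via $\mathbf U$. At step $k+1$, Prim's deterministically picks the minimum-label cut edge $g_{k+1}=(x^*,y^*)$ with label $U^*$; Aldous-Broder picks the first-exit edge of its random walk from $V(T_k)$. Writing out this first-exit distribution, the probability that the walk at $v\in V(T_k)$ first exits through a cut edge $e=(x,y)$ equals $G_v(x)\cdot \exp(-\beta_n U_e)/Z_x$, where $G_v$ is the Green's function of the walk killed upon leaving $V(T_k)$ and $Z_x=\sum_u \exp(-\beta_n U_{(x,u)})$. Hence the probability of exiting through a ``wrong'' cut edge, relative to $g_{k+1}$, carries an exponential suppression $\exp(-\beta_n(U_e-U^*))$ times a geometric prefactor coming from $G_v$ and the normalizers $Z_x$.

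The main quantitative input is the label gap. I would show that, with high probability over $\mathbf U$, at every step $k\in\{1,\ldots,n-1\}$ the minimum and second-minimum cut labels differ by at least $\Omega(1/n^3)$: at each individual step the cut has $k(n-k)\le\Theta(n^2)$ edges whose labels (modulo Prim's conditioning, which only truncates from below) are approximately i.i.d.\ Uniform$[0,1]$, giving a typical single-step gap of $\Theta(1/(k(n-k)))$, while a union bound over $n-1$ steps costs an extra factor $1/n$ in the tail, yielding the worst-case scale $1/n^3$. For part~(a) with $\beta_n\gg n^3\log n$, the factor $\exp(-\beta_n\cdot\min_k\mathrm{gap}_k)$ is smaller than $n^{-C}$ for any fixed $C$; combined with a polynomial bound on the geometric prefactor and a union bound over the $O(n^3)$ pairs (step, non-minimum cut edge), this yields $\P(\forall i:f_i=g_i)\to 1$. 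For part~(b) with $\beta_n\ll n^3$, at the step realizing the worst gap the suppression $\exp(-\beta_n/n^3)$ is of constant order, so Aldous-Broder picks an edge different from $g_{k^*+1}$ with constant probability, driving $\P(\forall i:f_i=g_i)\to 0$.

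The main obstacle is controlling the geometric prefactor $G_v(x)/Z_x$ relative to $G_v(x^*)/Z_{x^*}$, which by reversibility reduces to a ratio of hitting probabilities of the form $\P_x(\tau_v<\tau_{\mathrm{exit}})/\P_{x^*}(\tau_v<\tau_{\mathrm{exit}})$ for the killed walk in $V(T_k)$. On the highly anisotropic electric network this ratio could a priori be super-polynomial, since the walk may be trapped near low-label vertices and fail to reach $x^*$. I expect to bound it by $\mathrm{poly}(n)$ with high probability by exhibiting a well-connected sub-network inside $V(T_k)$: for instance, using that Prim's history gives useful upper bounds on the labels of the tree edges already built, so that hitting probabilities along Prim's own tree are bounded below by a polynomial in $1/n$. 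This mixing-type estimate is the technical heart of the upper bound in part~(a), and is the place where the $\log n$ factor in the threshold $n^3\log n$ can be expected to enter.
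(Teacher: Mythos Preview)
Your plan for part~(a) is essentially the paper's approach. The paper packages the ``geometric prefactor'' control into a clean lemma (Lemma~\ref{lemma:middlecond}) proved via Dirichlet's principle: for any vertex $u$ reachable from $t$ by edges of conductance $\ge a$ and any edge $e$ with $c(e)\le \eps_0 a$, one has $\P_t(\tau_u>\tau_e)\le \sqrt{2\eps_0 n}$. This bypasses the Green's function computation entirely and gives the polynomial bound you anticipate directly, without needing to argue about hitting probabilities along Prim's tree. Your route via $G_v(x)/Z_x$ would also work, but Dirichlet's principle is the cleaner tool here.

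Part~(b), however, has a genuine gap on two levels. First, even granting that at the step $k^*$ with the smallest gap the Aldous--Broder walk picks a non-Prim edge with probability bounded away from $0$, this only yields $\P_s(f_i=g_i\ \forall i)\le 1-c$ for some constant $c>0$, not convergence to $0$. To drive the probability to $0$ you need \emph{many} such opportunities, and then some independence structure to multiply the failure probabilities. The paper obtains $k_n\to\infty$ steps with comparable $g_j,h_j$ by a second-moment argument on isolated vertices of $G_{n,2/n}$ joining the giant (Proposition~\ref{prop:number_of_small_gaps}), and then shows the failure events at well-separated steps dominate independent Bernoullis.

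Second, and more subtly, the claim that at the worst-gap step the walk exits through $h_{k^*+1}$ with probability $\Theta(1)$ is itself unjustified: the suppression factor $\exp(-\beta_n(U_{h}-U_{g}))$ being $\Theta(1)$ says nothing about the geometric prefactor \emph{from below}. If the walk started at $w_{k^*}$ is overwhelmingly likely to reach the endpoint of $g_{k^*+1}$ before the endpoint of $h_{k^*+1}$, the exit through $h$ can still have vanishing probability. The paper handles this via a symmetrization trick (Proposition~\ref{prop:ab_is_not_prim}): on the network with $c(g)$ and $c(h)$ swapped, the voltage computation shows that $\P(A)+\P^{g\leftrightarrow h}(A)\ge 2\eps_0/(1+\eps_0)$, so at least one of the two networks gives a uniform lower bound on disagreement. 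Since swapping two labels preserves the i.i.d.\ law of $\mathbf U$, this is enough. Your plan does not contain this ingredient, and without it the lower bound does not go through.
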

The proof of this theorem is in Subsections \ref{subsec:ideas} and \ref{subsec:ab_vs_invasion}.

Note that $\P_s(\cdot)$ couples not only the algorithms, but also the models $\wst_{n}^{\beta_n}$ and $\mst_n$: they use the same $\mathbf{U}$ in Definitions \ref{def:wst_beta} and \ref{def:mst}.

\paragraph{Agreement of the models.}Around $\beta_n=n^{2+o(1)}$, there is a phase transition regarding whether $\P(\wst^{\beta_n}_n=\mst_n)$ is tending to 0 or 1. 
\begin{theorem} \label{thm:wst_vs_mst}
We consider some i.i.d.~$\unif[0,1]$ random variables $\mathbf{U}:=\{U_e\}_{e\in E(K_n)}$. We denote by $\P(\cdot)$ the coupling of $\wst_{n}^{\beta_n}$ and $\mst_n$ obtained by using the same $\mathbf{U}$.

Then the following holds.
\begin{itemize}
\item[a)] If $\beta_n\gg n^{2}\log n$, then
$$\lim_{n\rightarrow\infty}\P(\wst_{n}^{\beta_n}=\mst_n)=1.$$
\item[b)] If $\beta_n\ll n^{2}$, then
$$\lim_{n\rightarrow\infty}\P(\wst_{n}^{\beta_n}\ne \mst_n)=1.$$
\end{itemize}
\end{theorem}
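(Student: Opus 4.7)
I start from the conditional identity
$$\P\left(\wst_n^{\beta_n}=\mst_n\Given \mathbf U\right)=\frac{1}{1+R(\mathbf U)},\qquad R(\mathbf U):=\sum_{T\ne \mst_n}e^{-\beta_n\,(W(T)-W(\mst_n))},$$
reducing the theorem to showing $R(\mathbf U)\to 0$ in probability when $\beta_n\gg n^2\log n$ (part (a)) and $R(\mathbf U)\to\infty$ when $\beta_n\ll n^2$ (part (b)). The dominant contributions to $R(\mathbf U)$ in both regimes come from the \emph{simple swap} trees $T_{e,e^*}$ obtained by removing $e^*\in\mst_n$ and inserting $e\notin\mst_n$ with $e^*\in C(e)$, each contributing $e^{-\beta_n(U_e-U_{e^*})}$. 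The main combinatorial input is the cut property of $\mst$: for each $e^*\in\mst_n$ with associated tree-cut $[A,A^c]$ of cardinality $m(e^*):=|A|\cdot|A^c|$, the non-$\mst$ edges with $e^*\in C(e)$ are exactly those in $E(A,A^c)\setminus\{e^*\}$, and $e^*$ carries the smallest label among the $m(e^*)$ crossing edges.

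For part (b), I would lower-bound $R(\mathbf U)$ by its simple-swap contribution. Since the $m$ crossing labels are i.i.d.~$\unif[0,1]$, a standard computation with uniform order-statistic spacings gives, conditionally on $m(e^*)=m$ and for $\beta_n\gg 1$,
$$\E\!\left[\sum_{e\in E(A,A^c)\setminus\{e^*\}}e^{-\beta_n(U_e-U_{e^*})}\,\Big|\,m(e^*)=m\right]\;\asymp\; m/\beta_n,$$
the sum being dominated by the smallest gap $U_{(2)}-U_{(1)}\asymp 1/m$. For a balanced cut with $m\asymp n^2$ this is $\asymp n^2/\beta_n\to\infty$ as soon as $\beta_n\ll n^2$. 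Since $\mst_n$ contains $\Theta(n)$ edges whose associated cuts are of size $\asymp n^2$ --- a consequence of the critical Erd\H os--R\'enyi core of size $n^{2/3}$ together with its surrounding attachments (\cite{addario2017scaling}) --- a Paley--Zygmund (second-moment) argument then yields $R(\mathbf U)\to\infty$ in probability.

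For part (a), I would combine a union bound over non-$\mst$ edges with the electrical identity $\P\left(e\in \wst_n^{\beta_n}\Given \mathbf U\right)=c_e\,R_{\mathrm{eff}}(e\mid\mathbf U)$, $c_e:=e^{-\beta_n U_e}$, and use Thomson's principle to upper-bound $R_{\mathrm{eff}}(e)$. Using only the $\mst$-path resistance and the same cut-based accounting yields only the coarser sufficient condition $\beta_n\gg n^{7/3}$, since $\sum_{e^*\in\mst_n}m(e^*)=\sum_{\{u,v\}}d_{\mst_n}(u,v)\asymp n^{7/3}$. To reach the sharp $n^2\log n$ threshold I would improve the resistance bound using multiple parallel flow paths through the Erd\H os--R\'enyi component containing the endpoints of each non-$\mst$ edge, since the surplus edges of each near-critical component act as short-circuits. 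A maximal inequality applied to the $\Theta(n^2)$ candidate non-$\mst$ edges then absorbs the extra $\log n$ factor.

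\textbf{Main obstacle.} Two kinds of structural input on $\mst_n$ are needed: the existence of $\Theta(n)$ balanced cuts for part (b) (readily available from near-critical Erd\H os--R\'enyi theory), and the parallel-path improvement of Thomson's bound for part (a). The naive $\mst$-path bound is loose by roughly a factor of $n^{1/3}/\log n$, so the technically delicate step is carefully exploiting the dense connectivity within each near-critical component through which non-$\mst$ edges close their $\mst$ cycles.
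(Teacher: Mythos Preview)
Your overall strategy via the ratio $R(\mathbf U)$ and single-swap trees is sound and close to the paper's, but there is a real oversight in part~(a) and an avoidable detour in part~(b).

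\textbf{Part (a).} Your claim that the $\mst$-path Thomson bound yields only $\beta_n\gg n^{7/3}$ comes from summing the \emph{untruncated} quantity $c_e\,R_{\mst\text{-path}}=\sum_{f\in C(e)}e^{-\beta_n(U_e-U_f)}$ over all pairs and noting that there are $\sum_{e^\ast}m(e^\ast)\asymp n^{7/3}$ of them; but you have discarded the free inequality $\P(e\in\wst_n^{\beta_n}\mid\mathbf U)\le 1$. Split the external edges according to whether $U_e-m_e>\kappa\log n/\beta_n$ for a fixed $\kappa>3$. If yes, every summand in your Thomson bound is below $n^{-\kappa}$, hence $\P(e\in\wst_n^{\beta_n}\mid\mathbf U)\le |C(e)|\,n^{-\kappa}\le n^{1-\kappa}$, and the sum over $O(n^2)$ such edges is $n^{3-\kappa}=o(1)$. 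If no, use the trivial bound $1$; since conditioned on $\mst_n$ and its labels one has $U_e-m_e\sim\unif[0,1-m_e]$, the expected number of these ``significant'' edges is $O(n^2\log n/\beta_n)$. Thus $\E\bigl[|E(\wst_n^{\beta_n})\setminus E(\mst_n)|\bigr]=O(n^2\log n/\beta_n)+o(1)\to 0$ exactly for $\beta_n\gg n^2\log n$, and your parallel-flow refinement is not needed. The paper performs the identical significance split; its only difference is that for non-significant edges it bounds $\P(e\in\wst_n^{\beta_n}\mid\mathbf U)$ by a hitting-time estimate (Lemma~\ref{lemma:middlecond}) rather than Thomson, but either device suffices here.

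\textbf{Part (b).} Your swap-tree lower bound on $R(\mathbf U)$ is the right idea, but indexing by tree edges $e^\ast$ and their cuts forces you into a second-moment argument with heavily overlapping cut systems and an extraneous appeal to the existence of $\Theta(n)$ balanced $\mst$ cuts. Both complications vanish if you index by \emph{external} edges instead. The key fact (the paper's Lemma~\ref{lemma:conditioning}) is that, conditioned on $\mst_n$ together with its labels, the variables $\{U_e\}_{e\notin\mst_n}$ are independent with $U_e\sim\unif[m_e,1]$. Hence the events $\{U_e-m_e<\log 2/\beta_n\}$ are conditionally independent Bernoullis of probability at least $\log 2/\beta_n$, their count exceeds $Cn^2/\beta_n\to\infty$ with probability tending to $1$ by the law of large numbers, and each such $e$ contributes at least $\tfrac12$ to $R(\mathbf U)$ via the swap with $f_{\max}(e)$. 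No Paley--Zygmund and no structural Erd\H os--R\'enyi input are required.
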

The proof can be found in Subsection \ref{subsec:wst_vs_mst}.

Theorems \ref{thm:ab_vs_inv} and \ref{thm:wst_vs_mst} give us for $n^{2}\log n\ll \beta_n\ll n^3$, that the Aldous-Broder and Prim's invasion algorithms typically build the same edges but in different orderings.

\paragraph{Typical diameter.}
For $\beta_n\ll n^2$, the models $\wst^{\beta_n}_n$ and $\mst_n$ typically differ, so we are interested in when some of their properties are still similar. For $n\log n\ll \beta_n\ll n^2$, we show in (\ref{formula:expected_diff}) that these models differ only by $o(n)$ edges, so it is reasonable to think that they behave the same way locally. 

However, it is natural to expect that, for determining the typical diameter of $\wst^{\beta_n}_n$, we have to study our random electric network more carefully than for understanding its local properties. As an analogy, the local limit of $\ust(G_n)$ for any sequence of $d_n$-regular graphs $G_n$ with $d_n\rightarrow \infty$, is the same \cite{nachmias2022local}, but the order of the typical diameter can vary, detailed in Subsection 7.1 of \cite{nachmias2022local}. In some cases, there are no such counterexamples, e.g., in Corollary 1.8 of \cite{hladky2024graphon}, it shown for dense graphs without a sparse cut, that the local convergence of the $\ust$ implies the GHP convergence for the scaling limit. Of course, if $\beta_n$ grows fast, then our random electric network contains sparse cuts, so we do not hope that this result can be generalized here, but we expect that although locally $\wst^{\beta_n}_n$ behaves like $\mst_n$ for $\beta_n\gg n\log n$, we think that it is not the case for the global geometry. So, how fast $\beta_n$ should grow to ensure that the typical diameter of $\wst^{\beta_n}_n$ grows like for $\mst_n$?

\begin{theorem} \label{thm:has_diam_mst}If 
$\beta_n\gg n^{4/3}\log n$, then for any $\eps>0$ there exist $c=c(\eps)$ and $C=C(\eps)$ such that
 $$\P\left(cn^{1/3}\le \diam(\wst_{n}^{\beta_n})\le Cn^{1/3}\right)\ge 1-\eps. $$
\end{theorem}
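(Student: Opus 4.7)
The plan is to couple $\wst_n^{\beta_n}$ via Aldous--Broder with $\mst_n$ via Prim's invasion through the common labels $\mathbf{U}$, and bootstrap from $\diam(\mst_n) = \Theta(n^{1/3})$ (Addario-Berry--Broutin--Reed~\cite{addario2009critical}). The threshold $n^{4/3}\log n$ is the natural one: $n^{-4/3}$ is the width of the Erd\H{o}s--R\'enyi critical window that controls $\diam(\mst_n)$, and $\beta_n \gg n^{4/3}\log n$ is precisely what is needed for the conductances $e^{-\beta_n U_e}$ to resolve label differences at that scale. Fix a large constant $K$ and set $t^* := 1/n + K n^{-4/3}$; standard estimates for $G(n,t^*)$ give that, with high probability, every connected component $C$ has size and diameter of orders $n^{2/3}$ and $n^{1/3}$ up to logarithmic factors, and only $O(1)$ surplus edges.

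The main structural step is an \emph{internal coverage} lemma: on entering any component $C$ of $G(n,t^*)$, the Aldous--Broder walk covers $C$ before leaving. For a typical $v \in C$, internal edges at $v$ have $U_e \le t^*$ and so conductance $\ge e^{-\beta_n t^*}$, while the external edges at $v$ have labels distributed as order statistics of i.i.d.~uniforms on $[t^*, 1]$, so their total conductance is essentially $e^{-\beta_n(t^* + \Omega(1/n))}$. The resulting ratio is $e^{\Omega(\beta_n/n)}$, super-polynomial under our hypothesis; a Rayleigh/effective-resistance argument upgrades this to a super-polynomial expected exit time from $C$, dominating any polynomial bound on the biased cover time of $C$. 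Consequently the edges of $\wst_n^{\beta_n}$ inside $C$ form a spanning tree of $C$, and since $C$ has $O(1)$ surplus edges, this spanning tree has diameter $\Theta(\diam(C))$.

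The lower bound is then immediate: pick a critical cluster $C^*$ with $\diam(C^*) \ge cn^{1/3}$ (existing w.h.p.\ by classical ER estimates); the WST restricted to $C^*$ is a spanning tree of $C^*$ and therefore contains a path of length $\ge \diam(C^*)$. For the upper bound, contract each component $C_i$ of $G(n,t^*)$ to a single point, producing the cluster-quotient trees $\widetilde T_{\wst}$ and $\widetilde T_{\mst}$. I would show $\widetilde T_{\wst} = \widetilde T_{\mst}$ w.h.p.\ by comparing bridges: between any two adjacent clusters, the minimum-label bridge typically beats the second by $\Omega(n^{-4/3})$, and $\beta_n \gg n^{4/3}\log n$ is strong enough to force AB to pick the minimum with failure probability $n^{-c}$ per pair, which a union bound over the $O(n^{2/3})$ active pairs handles. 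Combined with the internal coverage lemma (which says the within-cluster WST has the same diameter scaling as the within-cluster MST), this transfers the bound $\diam(\mst_n) = O(n^{1/3})$ to $\wst_n^{\beta_n}$.

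The principal obstacle is the internal-coverage lemma: at a leaf of $C$, the pointwise escape probability toward $V(K_n)\setminus C$ can be as large as $\Omega(1)$ (because a single external edge with label barely above $t^*$ can carry comparable conductance to the lone internal one), so the super-polynomial expected exit time must be recovered by a \emph{global} electric-network argument on the whole cluster rather than through a step-by-step comparison. Showing that the biased-walk cover time of $C$ is indeed polynomial, in tandem with the resistance estimate, is where the bulk of the technical work will lie.
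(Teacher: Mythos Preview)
Your internal coverage lemma is too strong, and the argument you sketch for it does not go through. The per-vertex ratio $e^{\Omega(\beta_n/n)}$ is correct for a \emph{typical} vertex, but it fails globally: a cluster $C$ of size $\Theta(n^{2/3})$ has $\Theta(n^{5/3})$ boundary edges, so $\Theta(n^{1/3})$ of them land with labels in $[t^*,t^*+n^{-4/3}]$, carrying conductances comparable to internal edges whose labels sit near $t^*$. The walk does cross such edges before covering $C$; the paper notes explicitly (remark after Lemma~\ref{lemma:middlecond}) that $\wst_n^{\beta_n}[V(C)]$ is in general disconnected. The ``global electric-network argument'' you anticipate is exactly Lemma~\ref{lemma:middlecond}, and what it actually delivers is weaker: the walk covers $C$ before crossing any edge with label $\ge t^*+O(\log n/\beta_n)$. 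For $\beta_n\gg n^{4/3}\log n$ this shift is $o(n^{-4/3})$, hence still inside the critical window, but the $\wst$-path connecting two vertices of $C$ may detour through $G_{n,\,t^*+o(n^{-4/3})}\setminus C$ (Lemma~\ref{lemma:H_comps}).

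With only this relaxed conclusion a single-threshold contraction cannot work, and the paper instead runs the multi-scale scheme of \cite{addario2009critical}: it tracks $\diam\langle V(H_{n,p_i})\rangle_{E(\wst)}$ along the geometric ladder $p_i=\tfrac{1}{n}+(5/4)^i f_0 n^{-4/3}$ from critical to barely supercritical, bounding increments by longest paths in $G_{n,p_{i+1}}[V(K_n)\setminus V(H_{n,p_i})]$ (Proposition~\ref{prop:to_supcrit} and Lemma~\ref{lemma:from_supcrit_to_whole}). Your quotient-tree route has independent problems: $G(n,t^*)$ has $\Theta(n)$ components, not $O(n^{2/3})$; and for the large component the gap between its two smallest boundary labels is $\Theta(n^{-5/3})\ll n^{-4/3}$, so ``minimum bridge beats the second by $\Omega(n^{-4/3})$'' fails precisely where it matters most. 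For the lower bound the paper avoids surplus edges altogether by taking a slightly \emph{subcritical} largest component $H_{n,\,1/n-2f_0 n^{-4/3}}$, which remains a tree w.h.p.\ even after the $o(n^{-4/3})$ enlargement, so the $\wst$ is forced to agree with it exactly and one reads off the $n^{1/3}$ diameter from the $\ust$ on $\Theta(n^{2/3})$ vertices.
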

The proof is in Subsection \ref{subsec:wst_diam_abr}, where we follow the ideas of \cite{addario2009critical}. 

Their proof is based on that the component structure of the Erdős-Rényi graph $G_{n, p}$ with edges $E(G_{n, p}):=\{e\given U_e\le p\}$ is the same as the component structure of $\mst_n\cap G_{n, p}$. They could use this connection to determine $\E[\diam(\mst_n)]$ by understanding the longest paths of the some subgraphs of the Erdős-Rényi graphs.

We modify their method to work on $\wst_{n}^{\beta_n}$: by Lemma~\ref{lemma:E-R_comps_A-B}, any connected component of $G_{n, p}$ is connected w.r.t.~$E(G_{n, p+\frac{\log n}{\beta_n} })\cap \wst_{n}^{\beta_n}$, where $p+\frac{\log n}{\beta_n}$ is in the critical window $\left[\frac{1}{n}-\frac{O(1)}{n^{4/3}}, \frac{1}{n}+\frac{O(1)}{n^{4/3}}\right]$ of the Erdős-Rényi graph if $p$ is in the critical window and $\beta_n\gg n^{4/3}\log n$.

It will turn out that this is the crucial property needed for our proof. We think that the break down of our proof for $\beta_n=O(n^{4/3}\log n)$ is not a coincidence, and it is in line with the conjecture of Remark 6.2 of \cite{makowiec2023diameter} which can be translated to our setting by Lemma~\ref{lemma:connection-gamma-alpha}: for $\forall \alpha\in \left(1, \frac{4}{3}\right)$ and  $\beta_n=n^{\alpha}$, it is conjectured that the diameter of $\wst_n^{\beta_n}$ grows like $n^{a}$ for an $a\in \left(\frac{1}{3}, \frac{1}{2}\right)$. We detail this in Remark \ref{rmrk:intermediate}.

If $\beta_n\ll \frac{n}{\log n}$, then the typical diameter of $\wst_{n}^{\beta_n}$ grows like the typical diameter of $\ust_n$.
\begin{theorem} \label{thm:has_diam_ust}
If $\beta_n\ll \frac{n}{\log n}$, then for any $\eps>0$ there exist $c=c(\eps), C=C(\eps)$ such that
 $$\P\left(cn^{1/2}\le \diam(\wst_{n}^{\beta_n})\le Cn^{1/2}\right)\ge 1-\eps. $$   
\end{theorem}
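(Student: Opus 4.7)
The plan is to realize $\wst_n^{\beta_n}$ via Wilson's algorithm on the random electric network with conductances $c_e=\exp(-\beta_n U_e)$, verify quantitative estimates showing that the reversible chain behaves like SRW on $K_n$, and then apply the classical LERW-based arguments that yield $\diam(\ust(K_n))=\Theta(\sqrt n)$ (as in \cite{szekeres1983distribution} and the high-dimensional network framework of \cite{makowiec2023diameter}).

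First, I would establish several network estimates. With probability $1-o(1)$ over $\mathbf{U}$, uniformly in $x,u,v\in V(K_n)$: the weighted degree $\pi(x):=\sum_{y\ne x}c_{(x,y)}$ concentrates at $\Theta(n/\beta_n)$, so the stationary measure of the reversible chain is within a constant factor of uniform; the total conductance satisfies $m=\sum_e c_e=\Theta(n^2/\beta_n)$; every vertex has $\Theta(n/\beta_n)\gg\log n$ ``heavy'' neighbours (those $y$ with $U_{(x,y)}\le 1/\beta_n$), and these heavy edges form a random graph distributed as $G(n,1/\beta_n)$ of mean degree well above the connectivity/expansion threshold, hence an expander with high probability; and the effective resistance $R_{\mathrm{eff}}(u,v)=\Theta(\beta_n/n)$, obtained from the expansion of the heavy-edge subgraph via a standard multi-path flow or spectral argument. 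Combining these, the commute time $2m\,R_{\mathrm{eff}}=\Theta(n)$ matches SRW on $K_n$, the spectral gap is $\Theta(1)$, and the mixing time is $O(\log n)$.

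With these inputs, I would import the LERW analysis underlying $\diam(\ust_n)=\Theta(\sqrt n)$. For the upper bound, run Wilson's algorithm from an arbitrary root and bound, at each stage, the loop-erased length of the LERW from a new root to the growing tree by a birthday-style self-intersection estimate; summing across stages and using concentration yields $\diam(\wst_n^{\beta_n})\le C\sqrt n$ with probability $\ge 1-\eps$. For the lower bound, pick two uniformly chosen vertices $u,v$ and produce their unique tree-path via Wilson's algorithm rooted successively at $u$ and then $v$: the length of the second LERW is $\ge c\sqrt n$ with constant probability by the same hitting-time and self-intersection bounds.

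The main obstacle is the effective-resistance bound $R_{\mathrm{eff}}(u,v)=O(\beta_n/n)$. Using only the $n-2$ parallel two-edge paths through intermediate vertices yields only the weaker $O(\beta_n^2/n)$, corresponding to a diameter of order $\sqrt{n\beta_n}$ rather than $\sqrt n$; the correct bound requires routing flow along longer paths, which is possible precisely because the heavy-edge subgraph is an expander. The threshold $\beta_n\ll n/\log n$ is exactly where this subgraph has mean degree $\gg\log n$ --- the density above the expansion threshold for Erd\H{o}s--R\'enyi random graphs --- ensuring both the expander-based resistance bound and the rapid mixing needed for the LERW-length estimates.
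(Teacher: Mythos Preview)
Your strategy is correct and is essentially the paper's: show that $(K_n,\c_n)$ is a balanced expander with spectral gap bounded away from $0$, then feed it into Theorem~2.3 of \cite{makowiec2023diameter}, which packages precisely the Wilson/LERW analysis you sketch and outputs $\diam=\Theta(\sqrt n)$. The paper does not pass through effective resistances or commute times; it simply checks the (bal), (mix), (esc) hypotheses of that theorem directly from the spectral gap and the near-uniform stationary measure. The one substantive difference is how the spectral gap is obtained. You work with the single heavy-edge subgraph $\{e:U_e\le 1/\beta_n\}\sim G(n,1/\beta_n)$, whose mean degree $n/\beta_n\gg\log n$ makes it an expander; since those edges already carry a constant fraction of every weighted degree $\pi(x)$, the expansion lifts to a Cheeger bound for the full weighted network. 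The paper (Proposition~\ref{prop:expansion}) instead slices the label interval $[0,1]$ into $\sim n/(K\log n)$ thin layers of width $p_n\sim K\log n/n$: each layer is a copy of $G(n,p_n)$ with spectral gap $1-O(1/\sqrt{\log n})$ by \cite{hoffman2021spectral}, conductances within a layer agree up to a factor $e^{\beta_n p_n}=1+o(1)$, and Cheeger's inequality then combines the layers into an explicit bound $\lambda_2<0.92$. Your single-layer route is a bit simpler; the paper's layered route treats heavy and light edges uniformly and yields numerical constants, but both arrive at the same black box.
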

The proof is in Subsection \ref{subsec:ust_diam}, and it is based on that our random electric network with conductances $\{\exp(-\beta_nU_e)\}_e$ is a balanced expander.

\paragraph{Expected total length and local limit.} For a spanning tree $T$, we denote its total length by $L(T):=\sum_{e\in T}U_e$. Of course, $\E\left[L(\ust_n)\right]=\frac{n-1}{2}$, and it is known that $\E\left[L(\mst_n)\right]\sim \zeta(3)$ as $n\rightarrow \infty$ \cite{frieze1985value, aldous1990random, steele2002minimal}.

How does $\E\left[L( \wst_n^{\beta_n})\right]$ behave? For example, if $\beta_n$ grows fast enough, then we will show that, with probability $\rightarrow 1$ fast, $\wst_n^{\beta_n}$ contains edges only with small labels, and it differs from $\mst_n$ only by few edges. Hence, these models have asymptotically the same expected total lengths.

\begin{theorem}\label{thm:length}We use the notation $L(T):=\sum_{e\in E(T)}U_e$.
    \begin{itemize}
        \item[a)] If $\beta_n\gg n\log n$, then $\E\left[L(\wst_n^{\beta_n})\right]\rightarrow \zeta(3)$ as $n\rightarrow\infty$. 
        
        \item[b)]If $\beta_n=O(n\log n)$, then $\E\left[L(\wst_n^{\beta_n})\right]=O\left(\frac{n\log n}{\beta_n+\log n}\right)$.
        \item[c)] If $\beta_n\ll \frac{n}{\log n}$, then $\E[L(\wst_n^{\beta_n})]=(1+o(1))\frac{n}{\beta_n}\frac{1-\beta_ne^{-\beta_n}-e^{-\beta_n}}{1-e^{-\beta_n}}$.
    \end{itemize}
\end{theorem}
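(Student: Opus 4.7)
The plan is to treat the three regimes separately. For (a) I would exploit the integral representation $L(T) = \int_0^1 |T\setminus G_{n,p}|\,dp$ combined with Lemma~\ref{lemma:E-R_comps_A-B}, while for (b) and (c) I would work directly with the Aldous-Broder construction of $\wst_n^{\beta_n}$ and a size-biased identity for the label of each freshly attached edge. The lower bound $L(\wst_n^{\beta_n}) \ge L(\mst_n)$ is immediate from the minimality of the MST, so in all cases only an upper bound is at issue.

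For (a), the MST satisfies $L(\mst_n)=\int_0^1(C(G_{n,p})-1)\,dp$, where $C(\cdot)$ denotes the number of connected components. Lemma~\ref{lemma:E-R_comps_A-B} implies that the partition of $V(K_n)$ into components of $G_{n,p}$ refines the partition into components of $\wst_n^{\beta_n}\cap G_{n,p+\log n/\beta_n}$, hence $|\wst_n^{\beta_n}\setminus G_{n,q}| \le C(G_{n,q-\log n/\beta_n})-1$ for $q \ge \log n/\beta_n$. Substituting this into the integral identity for $\wst_n^{\beta_n}$ and using the trivial bound $|\wst_n^{\beta_n}\setminus G_{n,q}|\le n-1$ on the residual interval $[0,\log n/\beta_n]$ yields $L(\wst_n^{\beta_n}) \le L(\mst_n)+n\log n/\beta_n$. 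For $\beta_n\gg n\log n$ the excess is $o(1)$, and combined with $\E[L(\mst_n)]\to\zeta(3)$ this gives (a).

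For (b) and (c), write $W_i$ for the label of the edge added at step $i$ of Aldous-Broder, so that $L(\wst_n^{\beta_n})=\sum_{i=1}^{n-1}W_i$. Heuristically, when the walker at vertex $x_i$ first enters the unvisited set $N_i$ of size $n-i$, the chosen label has the size-biased distribution with density $\beta_n e^{-\beta_n v}/(1-e^{-\beta_n})$ on $[0,1]$, whose mean is $\mu(\beta_n):=(1-\beta_n e^{-\beta_n}-e^{-\beta_n})/(\beta_n(1-e^{-\beta_n}))$. For (c), $(n-1)\mu(\beta_n)(1+o(1))$ matches the stated asymptotic exactly. For (b) I would instead use the softmax tail
\begin{equation*}
    \P\bigl(W_i>V_{(1)}^{(i)}+t\,\bigm|\,x_i,\{U_{(x_i,y)}\}_{y\in N_i}\bigr)\le n\,e^{-\beta_n t},
\end{equation*}
where $V_{(1)}^{(i)}=\min_{y\in N_i}U_{(x_i,y)}$; since $\E[V_{(1)}^{(i)}]=O(1/(n-i))$, summing over $i$ gives $\E[L(\wst_n^{\beta_n})]=O(\log n)+O(n\log n/\beta_n)=O(n\log n/(\beta_n+\log n))$.

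The main obstacle---most acute in (c)---is that the labels $\{U_{(x_i,y)}\}_{y\in N_i}$ are \emph{not} i.i.d.\ $\unif[0,1]$ conditional on the walker's history $\mathcal{F}_{i-1}$: prior visits to $x_i$ at which the walker did not cross to $y$ size-bias $U_{(x_i,y)}$ upward. I would control this cumulative distortion by a Bayes/coupling argument combined with Bernstein-type concentration of the weighted degrees $S_i:=\sum_{y\in N_i}e^{-\beta_n U_{(x_i,y)}}$ around $(n-i)(1-e^{-\beta_n})/\beta_n$. In the regime $\beta_n \ll n/\log n$, the walk behaves essentially like simple random walk on a near-regular expander (cf.\ Subsection~\ref{subsec:ust_diam}), so the trajectory-level distortion is $o(1)$ uniformly in $i$; making this precise is the heart of the argument for (c).
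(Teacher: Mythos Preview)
Your treatment of (a) matches the paper's exactly: both use the integral representation $L(T)=\int_0^1(k(T\cap G_{n,p})-1)\,dp$ together with Lemma~\ref{lemma:E-R_comps_A-B} to get $\E[L(\wst_n^{\beta_n})]\le \E[L(\mst_n)]+O(n\log n/\beta_n)$. The paper, however, uses this \emph{same} inequality for (b) as well: for $\beta_n\ge\log n$ it gives $O(n\log n/\beta_n)$ directly, and for $\beta_n\le\log n$ the trivial bound $L\le n-1$ suffices. This is considerably cleaner than your softmax/Aldous--Broder route for (b) and sidesteps the conditioning issue you flag; you should simply reuse the argument from (a).

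For (c) there are two genuine gaps. First, your blanket claim that ``only an upper bound is at issue'' is false here: $L(\mst_n)\to\zeta(3)=O(1)$, whereas the target $\frac{n}{\beta_n}\cdot\frac{1-\beta_ne^{-\beta_n}-e^{-\beta_n}}{1-e^{-\beta_n}}\gg\log n$ when $\beta_n\ll n/\log n$, so the MST lower bound is useless and a matching lower bound is genuinely required. Second, the ``heart of the argument''---controlling the history-induced bias on the labels $\{U_{(x_i,y)}\}$ so that the size-biased heuristic $(n-1)\mu(\beta_n)$ becomes rigorous---is not supplied, and this is not a routine technicality: the Aldous--Broder walk visits vertices many times before covering, and the cumulative conditioning is precisely what makes direct trajectory analysis hard.

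The paper avoids all of this by abandoning the Aldous--Broder viewpoint for (c) and using Kirchhoff's formula $\P(e\in\wst_n^{\beta_n})=c(e)\,\mathcal{R}_{\mathrm{eff}}(e)$ instead. Citing the effective-resistance estimate of \cite{makowiec2024local} that $\mathcal{R}_{\mathrm{eff}}(u\leftrightarrow v)=(1+o(1))\frac{2\beta_n}{(1-e^{-\beta_n})n}$ uniformly over pairs with probability $1-o(n^{-1})$, one obtains
\[
\E[L(\wst_n^{\beta_n})]=(1+o(1))\binom{n}{2}\,\E\bigl[U e^{-\beta_n U}\bigr]\cdot\frac{2\beta_n}{(1-e^{-\beta_n})n},
\]
and evaluating $\E[Ue^{-\beta_n U}]=(1-\beta_ne^{-\beta_n}-e^{-\beta_n})/\beta_n^2$ gives the stated asymptotic directly. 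This yields both the upper and the lower bound simultaneously and never touches the random-walk trajectory; your heuristic $(n-1)\mu(\beta_n)$ is in fact this same computation in disguise, but the Kirchhoff route makes it rigorous without any coupling or conditioning argument.
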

We prove this theorem in Subsections \ref{subsec:mst_length} and \ref{subsec:ust_length}. 

In Theorem \ref{thm:length} c) of the first version of our paper, we have only shown that $\E\left[L(\wst_n^{\beta_n})\right]=\Omega\left(\frac{n}{\beta_n+1}\right)$. We asked whether this result can be strengthened which was answered positively in Theorem 1.4 of \cite{makowiec2024local}: they have obtained for the expected total length that $\E[L(\wst_n^{\beta_n})]=(1+o(1))\frac{n}{\beta_n}\frac{1-\beta_ne^{-\beta_n}-e^{-\beta_n}}{1-e^{-\beta_n}}$ if $\beta_n\ll \frac{n}{\log n}$. Their proof is similar to ours, and their improvement is that they have determined the asymptotics of the effective resistances between any pairs of vertices with probability $1-O(n^{-2})$, resulting in the asyptotics of the expected total length.

The main result of \cite{makowiec2024local} is that the local limit of $\wst_n^{\beta_n}$ is the same as the local limit of $\ust_n$ if $\beta_n\ll \frac{n}{\log n}$, and it is the same as the local limit of $\mst_n$ if $\beta_n\ge n (\log n)^{\gamma(n)}$, where $\gamma(n)\rightarrow\infty$ arbitrarily slowly. We show the agreement of the local limits of $\wst_n^{\beta_n}$ and $\mst_n$ even for $\beta_n\gg n \log n$, which is an improvement of the condition on $\beta_n$ in the case of fast-growing parameters:

\begin{theorem}\label{thm:local_limit} We denote by $o$ a uniformly chosen vertex from $V(K_n)$, by $B_{\mst_n}(o, r)$ and by $B_{\wst_n^{\beta_n}}(o, r)$ the ball of radius $r$ w.r.t.~the (unweighted) graph metric in $\mst_n$ and $\wst_n^{\beta_n}$, respectively.

If $\beta_n \gg n\log n$, then the local limit of $\wst_n^{\beta_n}$ is the same as the local limit of $\mst_n$, since
    $$\E\left[\P\left(B_{\mst_n}(o, r)=B_{\wst_n^{\beta_n}}(o, r)\Given \{U_e\}_{e\in E(K_n)}\right)\right]\rightarrow 1.$$
\end{theorem}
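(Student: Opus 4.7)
The plan is to combine the edge-discrepancy estimate (\ref{formula:expected_diff}), which for $\beta_n\gg n\log n$ yields $\E[|E_\Delta|]=o(n)$ where $E_\Delta:=E(\mst_n)\triangle E(\wst_n^{\beta_n})$, with the local weak limit of $\mst_n$ from \cite{addario2013local}, to prove that the $r$-balls around a uniform root $o$ agree with probability tending to $1$. The deterministic reduction is: if both $B_{\mst_n}(o,r)$ and $B_{\wst_n^{\beta_n}}(o,r)$ are disjoint from the vertex set $V_\Delta$ of endpoints of edges in $E_\Delta$, then the two $r$-balls coincide, because every tree path of length $\le r$ from $o$ in one tree avoids $E_\Delta$ and is therefore also a path of the same length in the other tree. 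Writing $B_T(e,r)$ for the vertices within $T$-distance $r$ of either endpoint of $e$, the set $A$ of roots $o$ for which the two balls disagree thus satisfies
$$
|A|\;\le\;\sum_{e\in E_\Delta}\bigl(|B_{\mst_n}(e,r)|+|B_{\wst_n^{\beta_n}}(e,r)|\bigr),
$$
so it is enough to prove $\E[|A|]=o(n)$.

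Exchanging the order of summation and invoking the $S_n$-symmetry of the edge labels together with vertex-transitivity of $K_n$, the estimate reduces to showing $\sum_{T\in\{\mst_n,\,\wst_n^{\beta_n}\}}\E[X_{v_0}^T]=o(1)$ for a fixed vertex $v_0$, where $X_{v_0}^T:=\#\{e\in E_\Delta:e\text{ incident to }B_T(v_0,r)\}$. Two inputs feed into this: (i) by symmetry, $\E[\deg_{E_\Delta}(u)]=2\,\E[|E_\Delta|]/n=o(1)$ uniformly in $u$; and (ii) the local weak limit of $\mst_n$ from \cite{addario2013local} supplies a finite $C_r$ with $\E[|B_{\mst_n}(v_0,r)|]\le C_r$, together with an analogous bound for $B_{\wst_n^{\beta_n}}(v_0,r)$ that follows because each of the $o(n)$ swaps in $E_\Delta$ perturbs the tree metric only locally near its endpoints.

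The main technical obstacle is the correlation between the events $\{e\in E_\Delta\}$ and the ball sizes $|B_T(v_0,r)|$: a direct Cauchy--Schwarz loses a $\sqrt n$ factor and fails to reach $o(1)$. I would handle this by truncation, working on the high-probability event $\mathcal G:=\{|E_\Delta|\le n\eta_n\}\cap\{|B_{\mst_n}(v_0,r+1)|\le K_r\}$ for a slowly vanishing sequence $\eta_n\to 0$ and a large constant $K_r$ (both obtained from Markov's inequality combined with the local limit bound). On $\mathcal G$ the desired estimate follows by a linearity calculation exploiting vertex-exchangeability, while on $\mathcal G^c$ the trivial bound $|A|\le n$ multiplied by a vanishing probability contributes a negligible amount.
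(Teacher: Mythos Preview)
Your overall strategy matches the paper's: invoke (\ref{formula:expected_diff}) to get $\E[|E_\Delta|]=o(n)$, observe that $B_{\mst_n}(o,r)\cap V_\Delta=\emptyset$ forces the two $r$-balls to agree, and combine this with the local finiteness of the $\mst_n$ limit from \cite{addario2013local}. Two points, however, need correction.

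First, the claim that $\E[|B_{\wst_n^{\beta_n}}(v_0,r)|]\le C_r$ because ``each of the $o(n)$ swaps perturbs the tree metric only locally'' is both unjustified and unnecessary. A single edge swap between two spanning trees can change graph distances globally (replace the edge $(1,2)$ in the path $1\text{--}2\text{--}\cdots\text{--}n$ by the edge $(1,n)$), so the heuristic is false. Fortunately only the $\mst_n$ ball is needed: the condition $B_{\mst_n}(o,r)\cap V_\Delta=\emptyset$ alone already implies $B_{\mst_n}(o,r)=B_{\wst_n^{\beta_n}}(o,r)$, by induction on the radius.

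Second, and more seriously, the truncation you propose does not close the gap. The event $\mathcal G$ singles out $v_0$ and therefore destroys the very vertex-exchangeability you want to invoke afterwards; there is no ``linearity calculation'' that decouples $\mathds{1}[u\in V_\Delta]$ from $\mathds{1}[u\in B_{\mst_n}(v_0,r)]$ once you have restricted to $\mathcal G$. What the paper does instead is a double-counting: truncate on $\{|B_{\mst_n}(o,2r)|\le M\}$ (radius $2r$, not $r{+}1$) and observe that if $s\in V_\Delta\cap B_{\mst_n}(o,r)$ on this event, then $B_{\mst_n}(s,r)\subseteq B_{\mst_n}(o,2r)$ has at most $M$ vertices, so each such $s\in V_\Delta$ is $r$-close to at most $M$ roots $o$ of this kind. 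This yields
\[
\P\bigl(B_{\mst_n}(o,r)\cap V_\Delta\ne\emptyset,\ |B_{\mst_n}(o,2r)|\le M\bigr)\ \le\ \frac{M\,\E[|V_\Delta|]}{n}\ =\ o(1)
\]
for each fixed $M$, so only \emph{tightness} of $|B_{\mst_n}(o,2r)|$ is required---which is immediate from local weak convergence---rather than the uniform integrability your route would implicitly demand to make $\E[|B_{\mst_n}(v_0,r)|\mathds{1}\{|B_{\mst_n}(v_0,r)|>K_r\}]$ vanish.
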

Similarly to their proof, we show that, conditioning on the labels $\{U_e\}_{e\in E(K_n)}$, we have that $|E(\wst^{\beta_n}_n)\triangle E(\mst_n)|=o(1)$ with probability $\rightarrow 1$ for $\beta_n\gg n\log n$ which is also an improvement in $\beta_n$: in Theorem 1.1 of \cite{makowiec2024local}, they have shown this for $\beta_n\gg n(\log n)^2$. Our proof is in Subsection \ref{subsec:mst_length}.

\paragraph{Our results on the model \cite{makowiec2023diameter}.}  For i.i.d.~edge weights of any fixed distribution, it is proven that the typical diameter of the weighted spanning tree of $G$ is of order $|V(G)|^{1/2+o(1)}$ for bounded degree expander graphs and finite boxes of $\Z^d$ with $d\ge 5$ \cite{makowiec2023diameter}. To illustrate that the condition about the bounded degrees cannot be relaxed, in their Proposition 6.1, they gave an example of an electric network on $K_n$ on which the $\wst$ and the $\mst$ typically agree. However, unlike our Theorem \ref{thm:wst_vs_mst}, they did not study the question of the phase transition of the agreement of models, and their method is not delicate enough to obtain this. 

In Section 6.1, they asked about the behavior of the $\wst$ for the random edge weights $w_e=\exp(U_e^{-\gamma})$ on the complete graph $K_n$. They conjectured that for some parameters, the $\wst$ and the $\mst$ typically differ, but their typical diameters grow similarly, which is in line with our results.

\begin{theorem}\label{thm:fixed_distr}
Consider $\c_{\mathrm{fix}, n, \gamma}:=\{\exp(U_e^{-\gamma})\}_{e\in E(K_n)}$, $\wst_{n}^{\mathrm{fix}, \gamma}:=\wst(\c_{\mathrm{fix}, n, \gamma})$ and $s\in V(K_n)$. We denote by $f_i$ and $g_i$ the $i^{th}$ edges added by the Aldous-Broder and Prim's invasion algorithms respectively, and by $\P(\cdot)$ and $\P_s(\cdot)$ the coupling between these models and algorithms started at $s$ obtained by using the same $\mathbf{U}$.
    \begin{itemize}
        \item[a)] For $\gamma>2$,
        $$\P_{s}(f_{i}= g_{i}\, \forall i\in \{1, \ldots, n-1\})=\lim_{n\rightarrow\infty}\P(\wst_{n}^{\mathrm{fix}, \gamma}=\mst_n)=1.$$
        \item[b)] For $1<\gamma<2$,        $$\P_{s}(f_{i}= g_{i}\, \forall i\in \{1, \ldots, n-1\})=0\text{, but still }\lim_{n\rightarrow\infty}\P(\wst_{n}^{\mathrm{fix}, \gamma}=\mst_n)=1.$$
        \item[c)] For $\frac{1}{3}<\gamma<1$, although
        $$\lim_{n\rightarrow\infty}\P(\wst_{n}^{\mathrm{fix}, \gamma}=\mst_n)=0,$$
        for any $\eps>0$ there exist $c=c(\eps), C=C(\eps)$ such that
 $$\P\left(cn^{1/3}\le \diam(\wst_{n}^{\mathrm{fix}, \gamma})\le Cn^{1/3}\right)\ge 1-\eps. $$
    \end{itemize}
\end{theorem}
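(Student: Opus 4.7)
The plan is to recycle the proofs of Theorems~\ref{thm:ab_vs_inv}, \ref{thm:wst_vs_mst} and \ref{thm:has_diam_mst}, with the conductances $\exp(-\beta_n U_e)$ replaced by $\exp(U_e^{-\gamma})$, driven by the dictionary $\beta_n \leftrightarrow n^{\gamma+1}$. Heuristically, whenever either algorithm has to choose between $\Theta(n)$ competing i.i.d.\ $\unif[0,1]$ labels, the minimum is of order $1/n$, and near $u\asymp 1/n$ the function $u\mapsto u^{-\gamma}$ has local slope $\asymp n^{\gamma+1}$; so locally it behaves like multiplication by an ``effective $\beta$'' of size $n^{\gamma+1}$. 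Under this translation, the thresholds $\beta_n \gg n^{3}\log n$, $\beta_n \gg n^{2}\log n$ and $\beta_n \gg n^{4/3}\log n$ from the earlier theorems become $\gamma>2$, $\gamma>1$ and $\gamma>1/3$, which are precisely the three cutoffs of parts~a), b), c).

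For part~a) I would adapt Subsection~\ref{subsec:ab_vs_invasion}: at every exploration step of Aldous--Broder, the probability of picking a non-minimum edge is bounded by $\sum_{e\ne e_{\min}}\exp(U_e^{-\gamma}-U_{e_{\min}}^{-\gamma})$; a mean value expansion at $u\asymp 1/n$ rewrites this as $\sum_k\exp(-c\, n^{\gamma+1}(U_{(k)}-U_{(1)}))$, which by standard order-statistics estimates is $o(1/n)$ with high probability whenever $\gamma+1>3$, so a union bound over the $n-1$ exploration steps closes the argument. Model equality $\wst_n^{\mathrm{fix},\gamma}=\mst_n$ then follows by the same substitution in the proof of Theorem~\ref{thm:wst_vs_mst}a), which is comfortably satisfied since $\gamma>2$ forces $\gamma>1$.

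For the negative half of~b) ($1<\gamma<2$), I would mimic Theorem~\ref{thm:ab_vs_inv}b): the gap between the two smallest labels among $\Theta(n)$ competitors is $\Theta(1/n)$ with uniformly positive probability, so $n^{\gamma+1}(U_{(2)}-U_{(1)}) = n^{\gamma}\cdot O(1)$ fails to dominate $\log n$ when $\gamma<2$, and essentially independent exploration steps upgrade ``one mistake with positive probability'' to probability~$1$. The model-agreement half uses $\gamma>1$ in the dictionary for Theorem~\ref{thm:wst_vs_mst}a). In part~c) ($1/3<\gamma<1$), model disagreement is Theorem~\ref{thm:wst_vs_mst}b) under the dictionary, and the diameter bound follows Subsection~\ref{subsec:wst_diam_abr}: the key analogue of Lemma~\ref{lemma:E-R_comps_A-B} asserts that every connected component of $G_{n,p}$ is already connected inside $E(G_{n,p+\delta_n})\cap\wst_n^{\mathrm{fix},\gamma}$ for $\delta_n$ satisfying $p^{-\gamma}-(p+\delta_n)^{-\gamma}\gg\log n$ on the critical window, and a direct computation shows $\delta_n=O(n^{-(4/3+\eps)})$ works exactly when $\gamma>1/3$, keeping $p+\delta_n$ inside the critical window so the rest of Theorem~\ref{thm:has_diam_mst} transfers verbatim.

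The main obstacle I anticipate is this modified Lemma~\ref{lemma:E-R_comps_A-B}: the original argument leans on the clean one-parameter family $\exp(-\beta_n U_e)$, under which conductances of comparably-labelled edges are themselves comparable, whereas $\exp(U_e^{-\gamma})$ varies violently even between nearby tiny labels. To work around this, I would stratify the edges of each $G_{n,p}$-component by label range and argue separately that (i) the abnormally-small-label edges are automatically included in $\wst_n^{\mathrm{fix},\gamma}$ with high probability, since they dominate any spanning-tree weight they can contribute to, and (ii) the remaining edges carry enough effective conductance to keep the component connected via the standard resistance-versus-random-walk bound. Checking the precise quantitative thresholds in this stratification, and handling the occasional ``monster'' conductances, is where essentially all of the real work of Theorem~\ref{thm:fixed_distr} will lie.
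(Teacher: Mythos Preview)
Your high-level plan and the dictionary $\beta_n\leftrightarrow n^{\gamma+1}$ are exactly what the paper does, and your calculations for the thresholds are correct. Where you diverge from the paper is in your final paragraph: you anticipate that the ``violent variation'' of $\exp(U_e^{-\gamma})$ for small labels will be an obstacle for Lemma~\ref{lemma:E-R_comps_A-B}, and you propose a stratification by label range to work around it. This concern is unfounded, and the stratification is unnecessary.

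The point you are missing is that all the deterministic lemmas of Section~\ref{sec:det} --- in particular Lemma~\ref{lemma:middlecond}, which underlies Lemma~\ref{lemma:E-R_comps_A-B} --- are stated for \emph{arbitrary} electric networks and use only \emph{ratios} of conductances, never their absolute size. Lemma~\ref{lemma:middlecond} needs only that every edge in $E_{\bcond}$ has conductance $\ge a$ and every edge in $E_{\scond}$ has conductance $\le \eps_0 a$; it is completely indifferent to how large the conductances inside $E_{\bcond}$ may be. So for the fixed-distribution model one simply takes $a=\exp(p^{-\gamma})$, $\eps_0 a=\exp((p+\delta_n)^{-\gamma})$, and checks $\eps_0\le n^{-\kappa}$, which is precisely the computation you already sketched for part~c). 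No ``monster conductances'' need separate handling.

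The paper packages this observation into a single comparison lemma (Lemma~\ref{lemma:connection-gamma-alpha}), proved in two lines via the mean value theorem for $x\mapsto x^{-\gamma}$: for labels $\le 2\log n/n$ the ratio $c_{\mathrm{fix},\gamma}(e)/c_{\mathrm{fix},\gamma}(f)$ is \emph{smaller} than the corresponding ratio for $c_{\mathrm{run},\alpha}$ with any $\alpha<\gamma+1$, and for labels $\ge 2/n$ it is \emph{larger} (up to a fixed power). Since the positive results (agreement of algorithms/models, diameter) use only distinguishability and all relevant edges have label $\le 2\log n/n$, while the negative results use only comparability and the witnessing edges from Proposition~\ref{prop:number_of_small_gaps} and~(\ref{formula:prob_wstnotmst}) can be taken with label $\ge 2/n$, the entire theorem transfers mechanically. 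Once you notice that only ratios matter, the proof is much shorter than you anticipate.
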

In Subsection \ref{subsec:heavy_tail}, we check that this follows from our results on $\wst_n^{\beta_n}$ for some $\beta_n=n^{\gamma+1+o(1)}$.

\paragraph{\bf Open problems.} We list some open questions.

\noindent {\bf Problem 1.} What is the typical diameter of $\wst_n^{\beta_n}(K_n)$ for $\frac{n}{\log n}\ll \beta_n\ll n^{4/3}\log n$? We expect that it grows like $\Theta(n^{\alpha})$ for some $\alpha \in (\frac{1}{3}, \frac{1}{2})$. However, one has to understand the underlying random electric network more carefully in order to obtain results about the typical diameter for these parameters. It could be easier to show that, for any $\delta>0$, there exists an $\eps>0$ such that if $\gamma\in (0, \eps)$ and $\beta_n=n^{1+\gamma}$, then the typical diameter of $\diam(\wst_n^{\beta_n})$ grows at least like $n^{1/2-\delta}$, since the random walk on some parts of our electric network could be analyzed for these parameters, see Remark \ref{rmrk:intermediate}.

\noindent {\bf Problem 2.} In Theorems \ref{thm:ab_vs_inv} and \ref{thm:wst_vs_mst}, we have determined the parameters where the phase transitions occur up to a logarithmic factor. By understanding the behavior of the random walk on these random electric networks more carefully, can we improve these results to obtain the parameters for the phase transition up to a constant factor?

In \textbf{Problem 3} of the first version of our paper, we asked if Theorem \ref{thm:length} c) can be improved from $\E\left[L(\wst_n^{\beta_n})\right]=\Omega\left(\frac{n}{\beta_n+1}\right)$ to $\E\left[L(\wst_n^{\beta_n})\right]=\Theta\left(\frac{n}{\beta_n+1}\right)$ for $\beta_n\ll \frac{n}{\log n}$. We were optimistic, since in order to determine the expected total length, one has to understand the effective resistances between the endpoints of the edges, and  it is shown with a robust computation, that in any sequence of $d_n\rightarrow\infty$-regular (unweighted) graphs, the effective resistance between the endpoints of an edge is typically $(1+o(1))\frac{2}{d_n}$ \cite{nachmias2022local}. In Theorem 1.4 of \cite{makowiec2024local}, our question was answered positively:  by studying the effective resistances, 
they could show that $\E[L(\wst_n^{\beta_n})]=(1+o(1))\frac{n}{\beta_n}\frac{1-\beta_ne^{-\beta_n}-e^{-\beta_n}}{1-e^{-\beta_n}}$ for $\beta_n\ll \frac{n}{\log n}$.

\subsection{Outline of the paper}
In Section \ref{sec:det}, after recalling the background about the spanning tree models on deterministic electric networks, we introduce Lemma \ref{lemma:middlecond} which is our key tool to study the similarities of the  algorithms: Prim's invasion algorithm is a greedy algorithm, and this lemma can be seen as a `relaxed version of greediness’ for the Aldous-Broder algorithm. In Subsections \ref{subsec:determ_ab_vs_inv} and \ref{subsec:determ_wst_vs_mst}, we apply this lemma to characterize when the algorithms have the same building steps and when they build the same model.

In Section \ref{sec:wst_vs_mst}, we apply the results of Section \ref{sec:det} to our random electric network. In Subsection \ref{subsec:ideas}, we collect the intuition behind the behavior of the model for different parameters highlighting why the phase transitions around these specific $\beta_n$'s happen. Then we prove the phase transitions regarding the algorithms and the models in Subsections \ref{subsec:ab_vs_invasion} and \ref{subsec:wst_vs_mst}. 

The second half of Section \ref{sec:wst_vs_mst} is about studying the model when it is not the $\mst_n$, but $\beta_n$ still grows fast. As a corollary of Lemma \ref{lemma:middlecond}, we give an important connection between the connected components of the Erdős-Rényi graph and the edges of $\wst_n^{\beta_n}$ in Lemma \ref{lemma:E-R_comps_A-B}, which is used to study the expected total length in Subsection \ref{subsec:mst_length} and the typical diameter in Subsection \ref{subsec:wst_diam_abr}.

In Section \ref{sec:wst_vs_ust}, we study the behavior of the model for slowly growing $\beta_n$'s. We give a reasonable bound on the spectral gap of our random electric network in Subsection \ref{subsec:spectral_gap}, hence we can use the results on the diameter of $\wst$ on balanced expanders from the literature in Subsection \ref{subsec:ust_diam}. The expected total length of $\wst$ for slowly growing $\beta_n$'s is studied in Subsection \ref{subsec:ust_length}.

\subsection{Connection to \texorpdfstring{\cite{makowiec2024diameter}}{}}\label{subsec:similar_papers}
During the very last revision of our paper, two preprints \cite{makowiec2024diameter, makowiec2024local} about the same model containing very similar results to ours have been uploaded to arXiv. The main overlaps are the results about the typical diameters.

However, we generate $\wst$'s by the Aldous-Broder algorithm, unlike their Wilson's algorithm approach. Our paper puts more emphasis on fast-growing $\beta_n$'s: we determine the phase transitions about the agreement of the Aldous-Broder and Prim's invasion algorithms and about the agreement of the models. Moreover, we study the behavior of the expected total length of $\wst_n^{\beta_n}$.

Unlike our work, in \cite{makowiec2024diameter}, it is shown that the scaling limit of $\wst_n^{\beta_n}$ is Aldous's Brownian CRT w.r.t.~the GHP convergence for constant $\beta$'s. Moreover, they formulate a conjecture about the exact value of the exponent of the typical diameter in the intermediate diameter setting. In \cite{makowiec2024local}, the local convergence of the model for slowly-growing and fast-growing $\beta_n$ is shown. In order to do that, they study the effective resistances and the edge overlaps. 

The effective resistance results of \cite{makowiec2024local} improve our results on the expected total lengths, see Theorem 1.4 of the latest version of \cite{makowiec2024local} or our Theorem \ref{thm:length}. In the latest version of our paper, we include a proof about the agreement of the local limits of $\mst_n$ and $\wst_n^{\beta_n}$ which holds under a milder condition on $\beta_n$ than Theorem 1.3. of \cite{makowiec2024local}.

\subsection{Acknowledgements}

This work was partially supported by the ERC Synergy Grant No. 810115–DYNASNET.

I am extremely grateful to my PhD advisor, Gábor Pete, for introducing to me this model and asking about the comparison of the Aldous-Broder and the invasion algorithms and about the behavior of the typical diameter. Moreover, I thank him for the useful discussions and for the comments on the manuscript.

\section{\texorpdfstring{$\wst(\c)$}{} on deterministic electric networks}\label{sec:det}
\subsection{Basics of weighted spanning trees of deterministic electric networks}\label{subsec:det_intro}
\paragraph{Notation} An \emph{electric network} is a graph $G$ with nonnegative weights assigned to the edges of $G$. The weight of an edge $e=(u_1 ,u_2)\in E(G)$ is called \emph{conductance} and denoted by $c(e)=c(u_1,u_2)=c(u_2,u_1)\ge 0$. We use the notation of Chapter 2 of \cite{PTN}.

In this paper, the collection of conductances is denoted by  $\c:=\{c(e)\}_{e\in E(G)}$, and the electric network on $G$ with conductances $\c$ by $(G, \c)$. 

We consider the \emph{random walk on the electric network $\c$ started at a vertex $v\in V(K_n)$}, i.e.,~the reversible Markov chain on $V(G)$ with transition probabilities $p(x,y)=c(x,y)/\sum_{(x,y)\in E(G)}c(x,y)$, and the probabilities of the events in this walk are denoted by $\P_{\c, v}(\cdot)$. We write $\P_{v}(\cdot)$ instead of $\P_{\c, v}(\cdot)$ if it does not make confusions. This random walk is interesting for us since the Aldous-Broder algorithm generating the weighted spanning trees on electric networks uses it.

For $S\subseteq V(G)$ the edge boundary of $S$ is $\partial S:=E(S, S^c)$, and the induced subgraph of $G$ on $S$ is $G[S]$.

As usual, for two sequences of positive numbers $\{a_n\}_{n}, \{b_n\}_{n}$, by $a_n\gg b_n$ and by $b_n=o(a_n)$, we mean $\lim_{n\rightarrow \infty}b_n/a_n=0$. We write $b_n=O(a_n)$ and $a_n=\Omega(b_n)$ if $\limsup_{n\rightarrow\infty}b_n/a_n<\infty$. If both  $a_n=O(b_n)$ and $a_n=\Omega(b_n)$ holds, then it is abbreviated as $a_n=\Theta(b_n)$.

\paragraph{The weighted spanning tree and the Aldous-Broder algorithm}
\begin{defi}
The \emph{weighted spanning tree $\wst(\c)$}   puts to any spanning tree $T$ of $K_n$ the probability
$$\P(\wst(\c)=T):= \frac{\prod_{e\in T}c(e)}{Z(\c)} \text{ with }Z(\c):=\sum_{\substack{T'\text{ is a}\\\text{ spanning tree of } K_n}}\prod_{e\in T'}c(e).$$    
\end{defi}

Then, considering some i.i.d.~$\unif[0,1]$ random variables $\{U_e\}_{e\in E(K_n)}$, $\wst^{\beta_n}(K_n)$ introduced in Definition \ref{def:wst_beta} is $\wst(\{\exp(-\beta_nU_{e})\}_{e\in E(K_n)})$, i.e.,~it is the weighted spanning tree w.r.t.~the random environment $\{\exp(-\beta_nU_{e})\}_{e\in E(K_n)}$.

It is known that $\wst(\c)$ can be generated by Wilson's and the Aldous-Broder algorithms, detailed in for example Theorem 4.1 and Corollary 4.9 of \cite{PTN}. They are useful for both simulations and proving theorems. The Aldous-Broder algorithm is more suitable for our purposes, because of its similarity to the Prim's invasion algorithm as we will see later.

\begin{algorithm}[Aldous-Broder algorithm] Given the electric network $\c$ on $K_n$ and a starting vertex $s\in V(K_n)$, we define the \emph{vertices} $v_0, v_1, \ldots, v_{n-1}$
and the \emph{edges of the tree} $f_1,\ldots, f_{n-1}$ built by the Aldous-Broder algorithm recursively. We begin with $v_0:=s$. 

For $0\le k\le n-2$, suppose that we already have defined $v_0, \ldots, v_{k}$ and $f_1, \ldots, f_{k}$. With the notation $V_k:=\{v_0, \ldots, v_{k}\}$ and $F_k:=\{f_1, \ldots, f_{k}\}$, let $f_{k+1}$ be the first edge between $V_k$ and $V(K_n)\setminus V_k$ crossed by the random walk on the electric network $\c$, and $v_{k+1}$ be simply the endpoint of $f_{k+1}$ which is not in $V_k$. 
\end{algorithm}

\begin{theorem}[\cite{aldous, broder1989generating}] Consider an electric network $\c$ on $K_n$ and a starting vertex $s\in V(K_n)$. We denote by $F_{n-1}$ the edges of the tree generated by the Aldous-Broder algoritm.

Then $(V(K_n), F_{n-1})$ is distributed as $\wst(\c)$.
\end{theorem}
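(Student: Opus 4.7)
The plan is to prove this classical theorem via an auxiliary Markov chain on rooted spanning trees, together with a time-reversal argument, following \cite{aldous, broder1989generating}. Writing $c(u) := \sum_v c(u, v)$, the random walk on $(K_n, \c)$ has transitions $p(u, v) = c(u, v)/c(u)$ and is reversible with stationary distribution $\pi(u) \propto c(u)$. The Markov chain tree theorem (MCTT) yields the suggestive identity
\[
\pi(u) \;\propto\; \sum_A \prod_{(v \to v') \in A} p(v, v') \;=\; \frac{1}{\prod_{v \neq u} c(v)} \sum_T \prod_{e \in T} c(e),
\]
with $A$ ranging over in-arborescences rooted at $u$ and $T$ over unrooted spanning trees, so the $\wst(\c)$ weight $\prod_e c(e)$ already appears inside $\pi$.

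The key step is to construct an auxiliary chain $(\tilde T_t, X_t)$ on pairs (rooted spanning tree, root) whose stationary distribution is $\tilde\mu(T, u) \propto c(u)\prod_{e \in T} c(e)$. Run the walk at stationarity for all $t \in \Z$ and define the \emph{last-exit arborescence} $\tilde T_t$: for each $v \neq X_t$ that has been visited before time $t$, include the directed edge $(X_{\sigma_v(t)}, X_{\sigma_v(t)+1})$, where $\sigma_v(t) := \max\{r < t : X_r = v\}$. Once the chain has covered $V(K_n)$, $\tilde T_t$ is a full spanning arborescence rooted at $X_t$, and the pair $(\tilde T_t, X_t)$ evolves by a single-edge swap per step: the new edge $(X_t, X_{t+1})$ replaces the old edge out of $X_{t+1}$. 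A detailed-balance computation, using $c(u, w) = c(w, u)$, verifies stationarity of $\tilde\mu$; marginalizing in $T$ gives the $\wst(\c)$ distribution on spanning trees.

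Finally, I would identify the Aldous-Broder tree $F_{n-1}$ with this stationary object. For the walk started at a fixed $s$, $F_{n-1}$ is the first-entrance tree, and by reversibility the time-reversed walk has the same law, mapping first-entrance edges of the forward walk to last-exit edges of the reversed walk. A symmetry argument---the law of $F_{n-1}$ does not depend on the start vertex, again by MCTT applied to the chain conditioned on a given cover-time endpoint---reduces the statement to the stationary setting where $\tilde T_t \sim \tilde\mu$. The main obstacle is the detailed-balance verification in the second paragraph: one must confirm that each state $(T, u)$ receives exactly the right total flow from its predecessors under reverse swaps, which reduces to identifying the unique predecessor within the fundamental cycle generated by the swap and matching weights via the symmetry $c(u, w) = c(w, u)$.
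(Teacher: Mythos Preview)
The paper does not give its own proof of this statement: it is quoted as a classical result from \cite{aldous, broder1989generating} and the reader is referred to Corollary~4.9 of \cite{PTN}. So there is nothing in the paper to compare your argument against; the theorem is used as background.

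Your sketch is the standard route from the cited sources: the auxiliary Markov chain on rooted spanning arborescences, its stationary measure $\tilde\mu(T,u)\propto c(u)\prod_{e\in T}c(e)$ verified by detailed balance, and the identification of the first-entrance tree with the last-exit arborescence via time reversal. One point to tighten: your final reduction ``the law of $F_{n-1}$ does not depend on the start vertex, by MCTT applied to the chain conditioned on a given cover-time endpoint'' is not the usual argument and, as stated, risks circularity. The clean way is to note that the arborescence chain has the special property that its state at time $t$ depends only on $(X_r)_{r\le t}$ after the last time each vertex was visited; hence, once the walk has covered $V(K_n)$, the arborescence is already exactly stationary regardless of the initial state. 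Time-reversing then gives that the first-entrance tree from a deterministic start $s$ has the $\wst(\c)$ law, with no separate ``independence of start vertex'' step needed.
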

This statement can also be found in \cite{PTN} as Corollary 4.9.

Electric networks are especially useful for us, since there is a fundamental relationship between weighted spanning trees and electric networks first discovered by Kirchhoff in 1847 in the case of the $\ust$. The effective resistance between two vertices $u, v\in V(K_n)$ can be defined in different ways, we choose the definition $\mathcal{R}_{\mathrm{eff}}(u\leftrightarrow v):=(C_u\P_u(\tau_v<\tau^+_u))^{-1}$, where $X_u$ is the random walk w.r.t.~$\c$, $\tau_u^+:=\min\{n\ge 1:\; X_n=u\}$, $\tau_v:=\min\{n\ge 0:\; X_n=v\}$ and $C_u=\sum_{u\sim v}c(u, v)$ is the invariant distribution. The heuristics about the effective resistances come from understanding electric currents and voltages. Kirchhoff’s formula for $\wst$'s can be found in Section 4.2 of \cite{PTN}: 
\begin{theorem*}[Kirchhoff’s Effective Resistance Formula, \cite{kirchhoff1847ueber}] For any $u, v\in V(K_n)$, one has that
$\P((u, v)\in \wst(\c))=c(u,v)\mathcal{R}_{\mathrm{eff}}(u\leftrightarrow v).$    
\end{theorem*}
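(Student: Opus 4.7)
The plan is to prove the formula directly from the Aldous-Broder algorithm just introduced, using the definition $\mathcal{R}_{\mathrm{eff}}(u\leftrightarrow v)=(C_u\P_u(\tau_v<\tau_u^+))^{-1}$ given in the paper. Since $\wst(\c)$ has the same distribution as the tree produced by Aldous-Broder started from any fixed vertex, I run the random walk $(X_k)$ from $s:=u$. With $\tau_v:=\min\{k\ge 1:X_k=v\}$, I first identify the event $\{(u,v)\in\wst(\c)\}$ with $\{X_{\tau_v-1}=u\}$: at any time $k<\tau_v$, vertex $v$ is still outside the current subtree $V_k$, so the edge $(u,v)$ gets added exactly when the walk first crosses it from $u$ to $v$, which happens at time $\tau_v$ (if at all); at times $k>\tau_v$ both endpoints lie in $V_k$, so $(u,v)$ is never added.

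Next, I count how often the directed edge $u\to v$ is traversed during $\{1,\ldots,\tau_v\}$. Because $\tau_v$ is the first visit to $v$, this number $N$ lies in $\{0,1\}$, hence $\P_u((u,v)\in\wst(\c))=\E_u[N]$. Writing $N=\sum_{k\ge 0}\mathbf{1}\{k<\tau_v,\,X_k=u,\,X_{k+1}=v\}$ and conditioning on $X_k$ gives $\E_u[N]=\E_u[M_u]\cdot p(u,v)$, where $M_u:=\#\{0\le k<\tau_v:X_k=u\}$ and $p(u,v)=c(u,v)/C_u$ is the one-step transition probability.

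Finally, $M_u$ is geometric: each time the walk visits $u$, with probability $\P_u(\tau_v<\tau_u^+)$ it reaches $v$ before returning to $u$ and $M_u$ stops growing, otherwise the strong Markov property restarts the experiment. Hence $\E_u[M_u]=1/\P_u(\tau_v<\tau_u^+)=C_u\,\mathcal{R}_{\mathrm{eff}}(u\leftrightarrow v)$ by the very definition of effective resistance, and combining all steps yields
\[\P_u((u,v)\in\wst(\c))=C_u\,\mathcal{R}_{\mathrm{eff}}(u\leftrightarrow v)\cdot\frac{c(u,v)}{C_u}=c(u,v)\,\mathcal{R}_{\mathrm{eff}}(u\leftrightarrow v).\]
The only mildly delicate step is the identification of tree-membership with $\{X_{\tau_v-1}=u\}$; after that the argument is a routine geometric-variable computation, made rigorous by the finiteness of $K_n$, which ensures $\tau_v<\infty$ almost surely and $M_u$ is integrable.
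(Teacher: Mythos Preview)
Your proof is correct. The paper itself does not prove Kirchhoff's formula; it merely cites it from Section 4.2 of \cite{PTN}, where the classical derivation proceeds via spanning-tree enumeration and the interpretation of edge currents, not via the Aldous--Broder walk. Your route is therefore genuinely different: you exploit the starting-vertex freedom of Aldous--Broder to reduce the tree-membership event to $\{X_{\tau_v-1}=u\}$, then turn this into a geometric-variable calculation matching the paper's probabilistic definition of $\mathcal{R}_{\mathrm{eff}}$. This is a clean, self-contained argument that stays entirely within the random-walk framework already set up in the paper, whereas the cited textbook proof requires the machinery of harmonic analysis on networks (voltages, currents, the transfer-impedance matrix). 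Each step you list is sound: the identification of $\{(u,v)\in\wst(\c)\}$ with $\{X_{\tau_v-1}=u\}$ uses that $u$ is the root and hence never a newly discovered vertex; the factorisation $\E_u[N]=p(u,v)\E_u[M_u]$ is justified since $\{k<\tau_v,\,X_k=u\}$ is $\mathcal{F}_k$-measurable; and the geometric law of $M_u$ follows from the strong Markov property at the successive returns to $u$.
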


\paragraph{The maximum spanning tree and Prim's invasion algorithm}
\begin{assum}\label{assum}
We suppose that $V(G)$ is connected w.r.t.~the edge set $\{e:\; c(e)>0\}$, and all the positive conductances are different.     
\end{assum}
\begin{defi}
We define $\maxst(\c)$ as the spanning tree $T$ which maximizes $\prod_{e\in T}c(e)$. 
\end{defi}
Under Assumption \ref{assum}, $\maxst(\c)$ is well-defined since there is a unique spanning tree $T$ maximizing $\sum_{e\in T}c(e)$.

We introduced $\maxst$ to have $\mst(\{U_{e}\}_{e\in E(K_n)})=\maxst(\{\exp(-\beta_nU_{e})\}_{e\in E(K_n)})$.

Although the maximum spanning tree is conventionally defined to maximize $\sum_{e\in T}c(e)$ instead of $\prod_{e\in T}c(e)$, this is not an important detail. Indeed, during the generation of the maximum sum spanning tree with Prim's algorithm, only the ordering of $\c$ is important and not the exact values of the conductances. Maximizing $\prod_{e\in T}c(e)$ is the same as maximizing $\sum_{e\in T}\log c(e)$, and $x\rightarrow \log c(x)$ is strictly increasing. Hence $\maxst(\c)$ is also the spanning tree maximizing $\sum_{e\in T}c(e)$.

\begin{algorithm}[Prim's algorithm/invasion tree]
Given an electric network $\c$ on $K_n$ and a starting vertex $s\in V(K_n)$, we define the \emph{vertices} $w_0, w_1, \ldots, w_{n-1}$ and the \emph{edges of the invasion tree} $g_1,\ldots, g_{n-1}$ recursively. We start with $w_0:=s$. 

For $0\le k\le n-2$, suppose that we already have defined $w_0, \ldots, w_{k}$ and $g_1, \ldots, g_{k}$. With $W_k:=\{w_0, \ldots, w_{k}\}$ and $G_k:=\{g_1, \ldots, g_{k}\}$, let $g_{k+1}$ be the edge with maximal conductance between $W_k$ and $V(K_n)\setminus W_k$, and $w_{k+1}$ is the endpoint of $g_{k+1}$ which is not in $W_k$.

We call $(V(K_n), G_{n-1})$ as the \emph{invasion tree}. 
\end{algorithm}

\begin{prop}[\cite{prim1957shortest}] Consider an electric network $\c$ on $K_n$. We denote by $G_{n-1}$ the edges of the invasion tree. Then $(V(K_n), G_{n-1})=\maxst(\c)$.
\end{prop}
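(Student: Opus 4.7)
The plan is to prove this by the standard matroid exchange/cut argument. First, observe that $(V(K_n), G_{n-1})$ is automatically a spanning tree: at each step $g_{k+1}$ connects $W_k$ to a fresh vertex $w_{k+1}\notin W_k$, so no cycle can be formed, and after $n-1$ steps every vertex of $K_n$ has been absorbed into $W_{n-1}$. Hence it suffices to show that $G_{n-1}$ maximizes $\prod_{e\in T}c(e)$, which under Assumption \ref{assum} yields $G_{n-1}=\maxst(\c)$ by the uniqueness of the maximizer.

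The key step is the inductive claim that for every $0\le k\le n-1$ the set $G_k=\{g_1,\ldots,g_k\}$ is contained in $\maxst(\c)$. The case $k=0$ is trivial. For the inductive step, assume $G_k\subseteq \maxst(\c)$ and consider $g_{k+1}$, the edge of maximum conductance in the cut $\partial W_k = E(W_k, V(K_n)\setminus W_k)$. If $g_{k+1}\in \maxst(\c)$ we are done; otherwise $\maxst(\c)\cup\{g_{k+1}\}$ contains a unique cycle $C$, and since $\maxst(\c)$ is connected, $C$ must traverse the cut $\partial W_k$ an even number of times, so there is some edge $e'\in C\cap \partial W_k$ with $e'\ne g_{k+1}$. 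By the greedy choice of $g_{k+1}$ and Assumption \ref{assum}, $c(g_{k+1})>c(e')$. Then $T':=(\maxst(\c)\cup\{g_{k+1}\})\setminus \{e'\}$ is again a spanning tree (removing one edge of the unique cycle keeps the graph connected and acyclic) and satisfies $\prod_{e\in T'}c(e)>\prod_{e\in \maxst(\c)}c(e)$, contradicting the maximality of $\maxst(\c)$. Applying the claim with $k=n-1$ and combining with the fact that $G_{n-1}$ is itself a spanning tree gives $G_{n-1}=\maxst(\c)$.

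The only mildly delicate step is the cut-cycle argument, namely that the cycle created by adding $g_{k+1}$ to $\maxst(\c)$ contains another edge of $\partial W_k$, and that Assumption \ref{assum} guarantees the strict inequality $c(g_{k+1})>c(e')$ needed for the exchange to strictly increase the product; everything else is bookkeeping.
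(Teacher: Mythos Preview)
Your proof is correct and is the standard cut--exchange argument for Prim's algorithm. The paper does not give its own proof of this proposition: it is stated as a classical result with a citation to \cite{prim1957shortest}, so there is nothing to compare against beyond noting that your argument is the expected one. One small point worth making explicit (though it is implicit in your use of Assumption~\ref{assum}): the edge $e'$ lies in $\maxst(\c)$ and hence has strictly positive conductance, so the distinctness of positive conductances in Assumption~\ref{assum} indeed yields the strict inequality $c(g_{k+1})>c(e')$ you need.
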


\paragraph{Notation regarding the algorithms}
\begin{notation}\label{notation:alg_v-e} Consider an electric network $(G, \c)$ satisfying Assumption \ref{assum}.

We denote by $V_k=\{v_{0}, \ldots, v_{k}\}$ and $F_k:=\{f_1, \ldots , f_k\}$ the first $k$
vertices and edges of $\wst(\c)$ in the order they were built by the Aldous-Broder algorithm on $(G, \c)$ started at $s$.

We write $W_k=\{w_{0}, \ldots, w_{k}\}$ and $G_k=\{g_{1}, \ldots, g_{k}\}$ for the first $k$ vertices and edges added to $\maxst(\c)$ by Prim's invasion algorithm on $(G, \c)$ started at $s$, in the order they were chosen by the algorithm. 

The edge with second largest conductance from $\partial W_k$ is denoted by $h_k$.
\end{notation}
Of course, $g_k$ and $h_k$ are the edges with the two largest conductances of $\partial W_k$.

\begin{remark*}
    Although in this section we formulate 
    our statements for $K_n$, these still hold for any $G$ with $|V(G)|=n$. Indeed, any electric network $(G, \c)$ can be extended to an electric network on $K_n$ by choosing $c(e)=0$ for $e\in E(K_n)\setminus E(G)$, not modifying the transition probabilites of the random walk.
\end{remark*}

\paragraph{Ideas of the section}
In Subsection \ref{subsec:middlecond}, we introduce Lemma \ref{lemma:middlecond}, which is the key tool of this section connecting the algorithms generating $\maxst(\c)$ and $\wst(\c)$: Prim's invasion algorithm is by definition greedy, and we can think of the statement of Lemma \ref{lemma:middlecond} as `a relaxed version of greediness’ holds for the Aldous-Broder algorithm. To prove this, we study how small conductances can appear in the random walk on $\c$ until exploring fully a given vertex set.

In Subsection \ref{subsec:determ_ab_vs_inv}, we study the probability of the agreement of the Aldous-Broder and Prim's invasion algorithms in each building step. It turns out that the algorithms build the same edges all the time with high probability if and only if $c(g_j)$ and $c(h_j)$ significantly differ for every $j\le n-1$.

In Subsection \ref{subsec:determ_wst_vs_mst}, we estimate the probability of $\wst(\c)$ and $\mst(\c)$ having exactly the same edges. We obtain that the models are typically the same if and only if adding any external edge to the $\maxst(\c)$, in the appearing cycle---called the fundamental cycle---any other edge has a lot larger conductance than this external edge. 

At first it might seem surprising that the agreement of the algorithms and the agreement of the models depend on such different properties of the electric network. However, $\maxst(\c)$ consists of edges of big conductances, and Lemma \ref{lemma:middlecond} implies that the edges used by Aldous-Broder algorithm are not arbitary: they cannot have small conductances. We note that some electric networks naturally appear with the phenomenon that whenever the two algorithms differ, the Aldous-Broder algorithm still adds an edge of $\maxst(\c)$ and the current invasion edge will be added later by the Aldous-Broder algorithm. Indeed, in Section \ref{sec:wst_vs_mst}, we will see that $\c:=\{\exp(-\beta_nU_e)\}_{e\in E(K_n)}$ gives us examples of electric networks where these conditions are indeed really different, resulting that the Aldous-Broder and the Prim's invasion algorithm build the same edges but in different order for $n^2\log n\ll \beta_n\ll n^3$.

\subsection{The `relaxed greediness’ of the Aldous-Broder algorithm}
\label{subsec:middlecond}

\begin{notation} \label{notation:bcond} Consider an electric network $(G, \c)$, $a>0$ and $\eps_0\in (0,1)$. We define the edge sets
\begin{align*}
    &E_{\bcond}:=\{e\in E(G)\given c(e)\ge a\},\\
    &E_{\mcond}:=\{e\in E(G)\given \eps_0 a<c(e)<a\}\text{ and }\\
    &E_{\scond}:=\{e\in E(G)\given c(e)\le \eps_0 a\}.
\end{align*}

For an $E\subseteq E(G)$, $\C_s^E$ denotes the connected component of $s$ in the graph $(V(G), E)$.
\end{notation}

Prim's invasion algorithm is greedy, therefore if $u$ and $v$ are in the same connected component w.r.t.~$E_{\bcond}$, i.e.,~there is a path between $u$ and $v$ containig edges only from $E_{\bcond}$, then the unique path connecting $u$ and $v$ in $\maxst(\c)$ still consists of edges only from $E_{\bcond}$.

We show that a relaxation of this greediness holds also for the Aldous-Broder algorithm: with high probability, if $u$ and $v$ are in the same connected component w.r.t.~$E_{\bcond}$, then the unique path in $\wst(\c)$ connecting $u$ and $v$ contains edges only from $E_{\bcond}\cup E_{\mcond}$.

\begin{lemma}\label{lemma:middlecond}
We consider an electric network $(K_n, \c)$, $t\in V(K_n)$, $a>0$ and $\eps_0\in (0,1)$. We use Notation \ref{notation:bcond}. The hitting time of a vertex $v\in V(K_n)$ is denoted by $\tau_v$, and the first time crossing an edge $e\in E(K_n)$ is denoted by $\tau_e$.

Then, for the random walk on $\c$ started at $t$, we have that
\begin{gather}\label{formula:hitting}
    \P_{t}\left(\max_{u\in V\left(\C_{t}^{E_\bcond}\right)} \tau_u > \min_{e\in E_{\scond}}\tau_e\right) \le\eps_0^{1/2}n^{7/2}.
\end{gather}
Consequently,
\begin{gather}\label{formula:relaxed_greedy}
    \P\left(V\left(\C_{t}^{E_{\bcond}}\right) \subseteq V\left(\C_{t}^{E(\wst(\c))\setminus E_{\scond}}\right)\right) \ge 1-\eps_0^{1/2}n^{7/2}.
\end{gather}
\end{lemma}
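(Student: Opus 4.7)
The plan is first to reduce \eqref{formula:relaxed_greedy} to \eqref{formula:hitting} using that the law of $\wst(\c)$ is independent of the Aldous--Broder starting vertex, so I may run the algorithm from $t$ itself with the very walk appearing in \eqref{formula:hitting}. On the event $\{\tau_{\mathrm{cov}} \le \tau_{\mathrm{small}}\}$, every first-entry edge added by Aldous--Broder while the walk explores $V(\C_t^{E_\bcond})$ lies in $E_\bcond \cup E_\mcond$, so these edges form a subtree of $\wst(\c)\setminus E_\scond$ rooted at $t$ that spans $V(\C_t^{E_\bcond})$, which is exactly the containment in \eqref{formula:relaxed_greedy}.

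For \eqref{formula:hitting} itself, the natural decomposition at a time horizon $L$ is
$$\P_t(\tau_{\mathrm{cov}} > \tau_{\mathrm{small}}) \le \P_t(\tau_{\mathrm{cov}} > L) + \P_t(\tau_{\mathrm{small}} \le L),$$
where $\tau_{\mathrm{cov}} := \max_{u \in V(\C_t^{E_\bcond})} \tau_u$ and $\tau_{\mathrm{small}} := \min_{e \in E_\scond} \tau_e$. I would bound the first term by Markov applied to $\E_t[\tau_{\mathrm{cov}}]$, estimated via Matthews' cover-time inequality combined with the commute-time identity $\E_u[\tau_v] + \E_v[\tau_u] = 2 C_\c R_{\mathrm{eff}}(u \leftrightarrow v)$, using that $R_{\mathrm{eff}}(u \leftrightarrow v) \le (n-1)/a$ for $u,v\in V(\C_t^{E_\bcond})$ via a big-edge path. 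The second term I would bound by a union bound over the $L$ steps: at any vertex $x$ adjacent to a big edge one has $C_x \ge a$, so the one-step probability of firing a small edge from $x$ is at most $C_x^s/C_x \le (n-1)\eps_0$. Optimizing $L$ to balance $\E_t[\tau_{\mathrm{cov}}]/L$ against $L\cdot n\eps_0$ produces the $\eps_0^{1/2}$ factor and a polynomial in $n$.

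The main obstacle I expect is controlling the walk during excursions outside $V(\C_t^{E_\bcond})$: a vertex $y$ not adjacent to any big edge has no floor-of-$a$ lower bound on $C_y$, so the clean $n\eps_0$ per-step bound on small-edge transitions breaks down there. My plan is to couple the walk on $\c$ with the walk on the reduced network $\c'$ obtained by deleting all edges in $E_\scond$; the two walks agree exactly until the first small-edge firing, so both terms in the decomposition become statements about $\c'$, where the obstruction is absent. A related subtlety is that the commute-time identity features the total conductance $C_\c$, on which we have no a priori upper bound; this is sidestepped by passing to continuous time, in which commute times reduce to $2R_{\mathrm{eff}}(u \leftrightarrow v)$ and the rate of small-edge firings, via reversibility, is $2\sum_{e\in E_\scond}c(e)/C_\c$, uniformly controlled by $n^2\eps_0$ (using $C_\c \ge a$ whenever $|E_\bcond|\ge 1$). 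Combining the continuous-time cover-time estimate with this reversibility bound, then translating back to discrete time, should yield the final bound $\eps_0^{1/2}n^{7/2}$ of \eqref{formula:hitting}.
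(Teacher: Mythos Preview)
Your plan is sound and genuinely different from the paper's. The paper avoids any time-horizon split: for each fixed $u\in V(\C_t^{E_\bcond})$ and small edge $e=(v_1,v_2)$ it replaces $e$ by a new vertex $w$ wired to $v_1,v_2$ with conductance $c(e)$, so that $\P_t(\tau_u>\tau_e)$ equals the voltage $v(t)$ with boundary data $v(u)=0$, $v(w)=1$; Dirichlet's principle bounds the energy by $2\eps_0 a$, and restricting the Dirichlet sum to a big-edge $u$--$t$ path (at most $n$ edges, each of conductance $\ge a$) together with the arithmetic--quadratic mean inequality yields $v(t)\le\sqrt{2\eps_0 n}$. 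A union bound over the $n\binom{n}{2}$ pairs $(u,e)$ gives (\ref{formula:hitting}). This is a short potential-theoretic argument with no coupling or continuous time. Your cover-time route works too, and carried out carefully even gives a smaller power of $n$, but three details need adjusting: (i) the VSRW commute identity is $|V'|\,R_{\mathrm{eff}}$, not $2R_{\mathrm{eff}}$; (ii) your stationary rate $2\sum_{e\in E_\scond}c(e)/C_\c$ does not control the walk started from $t$---use instead that the VSRW rate of small-edge crossing from \emph{any} vertex is $\le(n-1)\eps_0 a$, so $\P(\tau_{\mathrm{small}}\le L)\le Ln\eps_0 a$ directly; (iii) Matthews' bound for covering a subset requires $\max_v\E_v[\tau_s]$ over all $v\in V'$, which may blow up on excursions through medium edges---replace it by the cruder tour bound $\E_t[\tau_{\mathrm{cov}}]\le |V(\C_t^{E_\bcond})|\cdot\max_{u,v\in V(\C_t^{E_\bcond})}\E_u[\tau_v]\le n^3/a$, which only uses hitting times between vertices inside the big component.
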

\begin{remark*}
For our applications collected in Section \ref{sec:wst_vs_mst}, the exact values of exponents appearing in $\eps_0^{1/2}n^{7/2}$ are not important as soon as they are positive and finite. We will choose $\eps_0$ to be, for example, $o(n^{-\kappa})$ for a $\kappa>7/2$.
\end{remark*}
\begin{remark*} In words, (\ref{formula:relaxed_greedy}) tells us that the Aldous-Broder algorithm satisfies the `relaxed greediness’ mentioned right before the lemma.

In Figure \ref{fig:middlecond}, this `relaxed greediness’ is illustrated: on $\left\{V\left(\C_{t}^{E_{\bcond}}\right) \subseteq V\left(\C_{t}^{E(\wst(\c))\setminus E_{\scond}}\right)\right\}$, the subset $V\left(\C_{t}^{E_{\bcond}}\right)$ is connected w.r.t.~$E(\wst(\c))\setminus E_{\scond}$. Notice that $\wst(\c)\left[C_{t}^{E_{\bcond}}\right]$ can be disconnected, suggesting that, during the application of this property, one has to be more careful compared to the use of classical greediness, where $\mst(\c)\left[C_{t}^{E_{\bcond}}\right]$ is automatically connected.  
\end{remark*}

\begin{figure}\label{fig:middlecond}
\centering
\begin{tikzpicture}
\draw[blue, anchor=west] node at (-3, -1.2){$E_{\bcond}$};
\draw[lime!60!teal, anchor=west] node at (-1, -1.2){$E_{\mcond}$};
\draw[magenta, anchor=west] node at (1.2, -1.2){$E_{\scond}$};
\node[ellipse, draw,
    minimum width = 3cm, 
    minimum height = 1.8cm, blue, line width=1pt] at (0,0) {};

\coordinate (1) at (-1, 0);
\coordinate (2) at (-0.5, 0.4);
\coordinate (3) at (0.5, 0.3);
\coordinate (4) at (0.9, -0.2);
\coordinate (5) at (0, -0.7);
\coordinate (6) at (1.8, 0.8);
\coordinate (7) at (2, 0.5);
\coordinate (8) at (1.8, -0.6);
\coordinate (9) at (-2, -0.7);
\coordinate (10) at (-2.5, 0.3);
\coordinate (11) at (-2, 0.7);

\foreach \x in {1,...,11} \filldraw[] (\x) circle (1pt);

\draw[blue, line width=2pt](1) -- (2);
\draw[blue, line width=2pt](2) -- (3);
\draw[blue, line width=2pt](3) -- (4);
\draw[blue, line width=2pt](4) -- (5);
\draw[blue, line width=2pt](1) -- (4);
\draw[blue, line width=2pt](3) -- (5);
\draw[blue, line width=2pt](6) -- (7);
\draw[blue, line width=2pt](10) -- (11);

\draw[lime!60!teal, line width=1pt](2) -- (6);
\draw[lime!60!teal, line width=1pt](3) -- (7);
\draw[lime!60!teal, line width=1pt](11) -- (2);
\draw[lime!60!teal, line width=1pt](10) -- (1);
\draw[lime!60!teal, line width=1pt](2) -- (5);
\draw[lime!60!teal, line width=1pt](5) -- (1);
\draw[lime!60!teal, line width=1pt](1) -- (9);

\draw[magenta, line width=1pt, dotted](5) -- (9);
\draw[magenta, line width=1pt, dotted](4) -- (8);
\draw[magenta, line width=1pt, dotted](7) -- (8);
\draw[magenta, line width=1pt, dotted](1) -- (11);
\draw[magenta, line width=1pt, dotted](1) -- (3);
\end{tikzpicture} \ \ \ \
\begin{tikzpicture}
\node[ellipse, draw,
    minimum width = 3cm, 
    minimum height = 2cm, blue, line width=1pt] at (0,0) {};

\coordinate (1) at (-1, 0);
\coordinate (2) at (-0.5, 0.4);
\coordinate (3) at (0.5, 0.3);
\coordinate (4) at (0.9, -0.2);
\coordinate (5) at (0, -0.7);
\coordinate (6) at (1.8, 0.8);
\coordinate (7) at (2, 0.5);
\coordinate (8) at (1.8, -0.6);
\coordinate (9) at (-2, -0.7);
\coordinate (10) at (-2.5, 0.3);
\coordinate (11) at (-2, 0.7);

\foreach \x in {1,...,11} \filldraw[] (\x) circle (1pt); 

\draw[blue, line width=2pt](1) -- (2);
\draw[blue, line width=2pt](3) -- (4);
\draw[blue, line width=2pt](6) -- (7);

\draw[lime!60!teal, line width=1pt](2) -- (6);
\draw[lime!60!teal, line width=1pt](3) -- (7);
\draw[lime!60!teal, line width=1pt](11) -- (2);
\draw[lime!60!teal, line width=1pt](2) -- (5);
\draw[lime!60!teal, line width=1pt](1) -- (9);
\end{tikzpicture}
\caption{Left: $\mathcal{C}^{E_{\bcond}}_{t}$ of an electric network $(G, \c)$ is in the blue circle; right: the subtree generated by the Aldous-Broder algorithm until $\mathcal{C}^{E_{\bcond}}_{t}$ is completely explored.}
\end{figure}
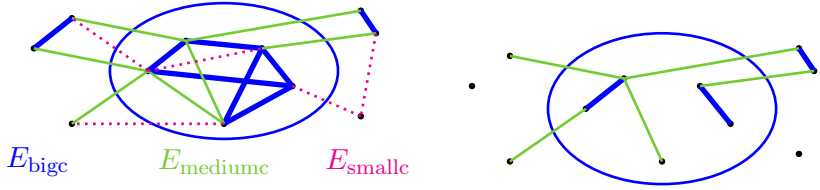

\begin{proof}[Proof of Lemma \ref{lemma:middlecond}]
Note that (\ref{formula:relaxed_greedy}) is a direct consequence of (\ref{formula:hitting}), so it is enough to prove (\ref{formula:hitting}).

We write $\C_{t}:=\C_{t}^{E_{\bcond}}$ to denote the connected component of $t$ w.r.t.~$E_\bcond$. We start by showing that, for $\forall u\in V(\C_{t})$ and $\forall (v_1, v_2)\in E_{\scond}$,
$$\P_{t}(\tau_{u}>\tau_{(v_1, v_2)})\le \sqrt{2\eps_0n}.$$

Firstly, we fix a $u\in V(\C_{t})$ and a $(v_1, v_2)\in E_{\scond}$, and construct $(G', \c)$ from $(K_n, \c)$. Let $V(G')=V(K_n)\cup \{w\}$, and the edges are obtained from $E(K_n)$ by adding $(w, v_1)$ and $(w, v_2)$ and by deleting $(v_1, v_2)$. We choose $c(w, v_1)=c(w, v_2)$ of $(G', \c)$ to be the same as $c(v_1, v_2)$ of $(K_n, \c)$, and denote by $v$ the voltage on $G'$ w.r.t.~$\P_{s, G'}$  with the boundary conditions $v(u)=0$, $v(w)=1$, which is a harmonic function on $V(G')\setminus \{u, w\}$.

By the natural coupling between the random walk on $K_n$ up to $\tau_{(v_1, v_2)}$ and the random walk on $G'$ up to $\tau_{w}$, and since the map $a\mapsto \P_{a, G'}(\tau_{u}>\tau_{w})$ is harmonic on $V(G')\setminus \{u, w\}$, it is clear that
\begin{gather*}
\P_{t, K_n}(\tau_{u}>\tau_{(v_1, v_2)})=\P_{t, G'}(\tau_{u}>\tau_{w})=v(t).
\end{gather*}

By Dirichlet's principle detailed in Exercise 2.13 of \cite{PTN},
\begin{gather}\label{formula:dirichlet}
\mathcal{R}_{G'}(u,w)^{-1}=\min_{F\text{ with }F(u)=0, F(w)=1}\sum_{e\in G'} c(e)(dF(e))^2,
\end{gather}
and the minimizer is the voltage $v$ defined above. Substituting in the function $F|K_n:\equiv 0$ and $F(w):=1$,
\begin{gather*}
    \mathcal{R}_{G'}(u,w)^{-1}\le \sum_{v\in V(K_n)} c(v, w)\cdot 1= c(w, v_1)+c(w, v_2)\le 2\eps_0a.
\end{gather*}

Since $u\in \C_{s}$, there exists a path $P(u, t)$ between $u$ and $t$ formed by edges only from $E_{\bcond}$, and
\begin{gather*}
2\eps_0a\ge \mathcal{R}_{G'}(u,w)^{-1}= \sum_{e\in G'} c(e)(dv(e))^2\ge \sum_{e\in P(u, t)}a(dv(e))^2\ge a\frac{v(t)^2}{n}.
\end{gather*}
The last inequality comes from the inequality between the arithmetic and the quadratic means, using that $\sum_{e\in P(u,t)}dv(e)=v(t)-v(u)=v(t)$, and that $P(u, t)$ contains at most $n$ edges. Hence, we indeed obtain that $$\P_{t}(\tau_{u}>\tau_{(v_1, v_2)})=v(t)\le\sqrt{2\eps_0n}.$$

We get (\ref{formula:hitting}) by the union bound:
\begin{gather*}
    \P_{t}\left(\max_{u\in V(\C_{t})} \tau_u > \min_{e\in E_{\scond}}\tau_e\right)\le \sum_{u\in V(\C_{t})}\sum_{e\in E_{\scond}}\P_{t}(\tau_{u}>\tau_{(v_1, v_2)})\le n\binom{n}{2}\sqrt{2\eps_0n}\le n^{7/2}\eps_0^{1/2}.
\end{gather*}
\end{proof}
 
\subsection{Agreement of the algorithms and the conductances on the boundaries}
\label{subsec:determ_ab_vs_inv}
We turn to give a sufficient condition on electric networks, stated in Proposition \ref{prop:ab_is_prim}, to guarantee the Aldous-Broder and Prim's invasion algorithm to build the same edges in every step.

\begin{lemma}\label{lemma:invasion_path}
Consider an electric network $(K_n, \c)$ satisfying Assumption \ref{assum}, $0\le k\le n-2$ and $s\in V(K_n)$. As in Notation \ref{notation:alg_v-e}, $(W_k, G_k)$ is the subtree built by the first $k$ steps of Prim's invasion algorithm.

Then, for $\forall a\in W_k$ and $\forall b\notin W_k$, we have that
$$c(e)>c(a,b) \text{ for any }e\in P(w_k, a),$$
where we denote by $P(w_k, a)$ the unique path between $w_k$ and $a$ using edges only from $G_k$.
\end{lemma}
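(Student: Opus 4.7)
The plan is to proceed by induction on $k$. The base case $k = 0$ is vacuous, since $W_0 = \{s\}$ forces $a = w_0 = s$ and the path $P(w_k, a)$ is empty.

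For the inductive step, fix $a \in W_k$ and $b \notin W_k$. Assuming $a \neq w_k$ (else the path is empty), we have $a \in W_{k-1}$. Write $x \in W_{k-1}$ for the endpoint of $g_k$ other than $w_k$, so that the path $P(w_k, a)$ in $G_k$ decomposes as $\{g_k\}$ followed by the unique path from $x$ to $a$ in the smaller tree $G_{k-1}$. I handle the new edge $g_k$ at once: since $a \in W_{k-1}$ and $b \notin W_k \supseteq W_{k-1}$, the edge $(a,b)$ belongs to $\partial W_{k-1}$, so Prim's selection of $g_k$ as the maximum-conductance edge of $\partial W_{k-1}$ together with Assumption~\ref{assum} yields $c(g_k) > c(a,b)$. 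For the older edges along the $G_{k-1}$-path from $x$ to $a$, the plan is to use a tripod (median) decomposition in the tree $G_{k-1}$ on the three vertices $w_{k-1}, x, a$: letting $r$ be the last common vertex of the $G_{k-1}$-paths from $w_{k-1}$ to $x$ and from $w_{k-1}$ to $a$, the $G_{k-1}$-path from $x$ to $a$ splits as a $G_{k-1}$-path from $x$ to $r$ (a suffix of the path from $w_{k-1}$ to $x$) concatenated with a $G_{k-1}$-path from $r$ to $a$ (a suffix of the path from $w_{k-1}$ to $a$).

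I would then apply the inductive hypothesis twice. On the $r$-to-$a$ segment, the choice $(a', b') = (a, b)$ is admissible since $a \in W_{k-1}$ and $b \notin W_{k-1}$, and gives $c(e) > c(a,b)$ directly. On the $x$-to-$r$ segment, the crucial choice is $(a', b') = (x, w_k)$, admissible since $x \in W_{k-1}$ and $w_k \notin W_{k-1}$; this yields $c(e) > c(x, w_k) = c(g_k)$, which combined with the bound on $g_k$ from the previous paragraph chains transitively to $c(e) > c(a,b)$.

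The subtle point, and what I expect to be the key idea, is the second application of the inductive hypothesis with the freshly added vertex $w_k$ playing the role of the external point $b'$: the new edge $g_k$ then bridges paths in the old tree $G_{k-1}$ to the currently relevant external edge $(a,b)$. A direct-greedy approach (trying to show $(a,b) \in \partial W_{j-1}$ for every $g_j$ on the path) would fail for edges on the down-segment from the median of $w_k$ and $a$ in the full Prim tree down to $a$, since their indices can be at most $i$ with $a = w_i$; the tripod decomposition in $G_{k-1}$ together with the inductive chaining through $g_k$ is precisely what circumvents this obstacle.
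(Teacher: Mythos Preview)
Your inductive proof is correct and takes a genuinely different route from the paper. The paper argues directly, without induction: it lets $e^*$ be the minimum-conductance edge on $P(w_k,a)$, takes $u$ to be the common ancestor of $w_k$ and $a$ in the tree $(W_k,G_k)$ rooted at $s$, and considers the set $W^*$ of vertices reachable from $u$ via edges of conductance $>c(e^*)$. The greedy temporal order of Prim's algorithm forces every vertex of $W^*$ to be added before any descendant of $u$ lying outside $W^*$; since $a$ precedes $w_k$, the edge $e^*$ must lie on the $w_k$--$u$ leg, so $a\in W^*$ and $w_k\notin W^*$; and since $w_k$ precedes $b$, also $b\notin W^*$, whence $c(e^*)>c(a,b)$. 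Your argument instead peels off the last step $g_k$ and handles the remaining $G_{k-1}$-path via a tripod decomposition at the median of $w_{k-1},x,a$, invoking the inductive hypothesis once with the external pair $(a,b)$ and once with the pair $(x,w_k)$, then chaining through $c(g_k)>c(a,b)$. The paper's approach makes the greedy structure of Prim's algorithm explicit through the level set $W^*$, while yours is a clean structural induction on the growing tree; both are short, and your chaining trick through the newly added vertex $w_k$ as the external point $b'$ is a nice device that neatly sidesteps the obstacle you identify in your last paragraph.
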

\begin{proof}
Suppose that $a\ne w_k$, since for $a=w_k$ the statement is empty. As we will see below, the lemma will easily follow from that $w_k$ is added to the invasion tree after $a$ but before $b$.

We denote by $e^*$ the edge of $P(w_k, a)$ with minimal conductance, i.e.,~$c(e^*)=\min_{e\in P(w_k, a)}c(e)$. There is a unique $u\in P(w_k, a)$ such that $u$ is the common ancestor of $w_k$ and $a$ in the tree $(W_k, G_k)$ with root $s=w_0$. We consider
\begin{gather*}
    W^*:=\{v\in V(K_n)\given v\text{ is reachable from $u$ using only edges $e$ satisfying $c(e)>c(e^*)$}\}.
\end{gather*} 
It is easy to check that if $t_2\notin W^*$ is a descendant of $u$ in the invasion tree and $t_1\in W^*$, then $t_1$ is added earlier to the invasion tree than $t_2$.

If $e^*\in P(u, a)$, then $w_k\in W^*$ but $a\notin W^*$, contradicting $a$ being added earlier than $w_k$. So we obtain that $e^*\in P(w_k, u)$, hence $w_k\notin W^*$ and $a\in W^*$. Since $w_k$ is added to the invasion tree earlier than $b$, we have $b\notin W^*$, implying that $c(e^*)\ge c(a, b)$. Because of Assumption \ref{assum} this cannot be an equality.
\end{proof} 

\begin{prop}\label{prop:ab_is_prim}
Consider an electric network $(K_n, \c)$ satisfying Assumption \ref{assum}, $s\in V(K_n)$ and $0<\eps_0<1$ such that $2\eps_0^{1/2}n^{9/2}\le 0.7$. We use Notation \ref{notation:alg_v-e}.

If $\frac{c(h_j)}{c(g_j)}<\eps_0 \ \forall j=0, \ldots, n-2$, then
$$\P_{s}\left(f_j= g_j\ \forall j\in \{0, \ldots, n-1\}\right)\ge 1-2\eps_0^{1/2}n^{9/2}.$$
\end{prop}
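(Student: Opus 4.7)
The natural approach is induction on $j$, with Lemma \ref{lemma:middlecond} applied at each step. Let $A_j := \{f_i = g_i \text{ for all } i = 1, \ldots, j\}$ with $A_0 = \Omega$. On $A_{j-1}$, the set of vertices explored by the Aldous-Broder algorithm satisfies $V_{j-1} = W_{j-1}$, and the random walk sits at $v_{j-1} = w_{j-1}$. By the strong Markov property, the future of the walk from $w_{j-1}$ is again a random walk on $(K_n, \c)$ started at $w_{j-1}$, so I can apply Lemma \ref{lemma:middlecond} with $t = w_{j-1}$, threshold $a = c(g_j)$, and the given $\eps_0$. The hypothesis $c(h_j)/c(g_j) < \eps_0$ then places every edge of $\partial W_{j-1}$ other than $g_j$ into $E_{\scond}$, while $g_j$ together with all edges of higher conductance lie in $E_{\bcond}$.

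The key geometric input is checking that the far endpoint $y_j$ of $g_j = (x_j, y_j)$ (with $x_j \in W_{j-1}$) lies in $\C_{w_{j-1}}^{E_{\bcond}}$. For this I invoke Lemma \ref{lemma:invasion_path} with $a = x_j$, $b = y_j$: every edge on the invasion-tree path from $w_{j-1}$ to $x_j$ has conductance strictly greater than $c(x_j, y_j) = c(g_j)$, hence lies in $E_{\bcond}$; appending $g_j$ produces a path in $E_{\bcond}$ from $w_{j-1}$ to $y_j$, so $y_j \in \C_{w_{j-1}}^{E_{\bcond}}$. On the Lemma \ref{lemma:middlecond} event, which has probability at least $1 - \eps_0^{1/2} n^{7/2}$, the walk then visits $y_j$ before crossing any edge of $E_{\scond}$. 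To reach $y_j \notin W_{j-1}$ the walk must cross some boundary edge of $W_{j-1}$, and this first boundary crossing is neither an $E_{\scond}$ edge (by the good event) nor an edge of conductance exceeding $c(g_j)$ (such edges are absent from $\partial W_{j-1}$ since $g_j$ is the boundary maximum), so it must be $g_j$ itself. Thus $f_j = g_j$ and $v_j = y_j = w_j$, continuing the induction.

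A union bound over $j \in \{1, \ldots, n-1\}$ yields $\P_s(A_{n-1}^c) \leq (n-1)\eps_0^{1/2} n^{7/2} \leq \eps_0^{1/2} n^{9/2}$, comfortably within the claimed $2\eps_0^{1/2} n^{9/2}$. The main subtlety I expect is making the Markov-property reduction rigorous: one must argue that on $A_{j-1}$, the moment the walk first discovers $w_{j-1}$ is a stopping time and the event $A_{j-1}$ is measurable with respect to the walk up to that time, so that the walk genuinely restarts at $w_{j-1}$ and Lemma \ref{lemma:middlecond} applies verbatim to the restarted walk without any need to redo the Dirichlet/resistance computation. The geometric step relying on Lemma \ref{lemma:invasion_path} is the other place where care is needed, since otherwise it is not obvious that the high-conductance component of $w_{j-1}$ ever reaches across the boundary to $y_j$.
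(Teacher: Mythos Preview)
Your proof is correct and follows essentially the same route as the paper's: both invoke Lemma~\ref{lemma:invasion_path} to place $w_j$ in $\C_{w_{j-1}}^{E_{\bcond}}$, then apply Lemma~\ref{lemma:middlecond} with $a=c(g_j)$ to bound each conditional step probability by $\eps_0^{1/2}n^{7/2}$, and finally combine over the $n-1$ steps. The only cosmetic difference is that the paper multiplies the conditional probabilities via the chain rule and uses $(1-x)^{n-1}\ge 1-2(n-1)x$, whereas your direct union bound on the complement in fact yields the slightly sharper $\eps_0^{1/2}n^{9/2}$.
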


\begin{proof} Firstly, fix a $0\le j\le n-2$ and use Notation \ref{notation:bcond} with $a:=c(g_{j+1})$. 

Then $\partial W_j\setminus E_{\scond}=\{g_{j+1}\}$. Moreover, Lemma \ref{lemma:invasion_path} and $\frac{h_{j+1}}{g_{j+1}}<\eps_0$ imply that $w_{j+1}\in \C_{w_j}^{E_{\bcond}}$, so $\C_{w_j}^{E_{\bcond}}\not\subseteq W_j$. Then we apply Lemma \ref{lemma:middlecond} with $t:=w_j$, and get that the first edge through which the random walk leaves $W_j$ is $g_{j+1}$ with probability $\ge 1-\eps_0^{1/2}n^{7/2}$, i.e.,
$$\P\left(f_{j+1}=g_{j+1}\Given f_i=g_i \;\forall i\le j\right)\ge 1-\eps_0^{1/2}n^{7/2}.$$

By the chain rule for conditional probabilities, $e^{-2x}\le 1-x\le e^{-x}$ for $0<x<0.7$ and $2\eps_0^{1/2}n^{9/2}\le 0.7$,
\begin{align*}
    \P_{s}(f_{j}=g_{j}\, \forall j\le n-1)&=\prod_{j=0}^{n-2} \P_{s}(f_{j+1}= g_{j+1}\, |\,f_{i}=g_{i}\,\forall i\le j)\ge\left(1-\eps_0^{1/2}n^{7/2}\right)^{n-1}\\
    &\ge \exp\left(-(n-1)2\eps_0^{1/2}n^{7/2}\right)\ge 1-(n-1)2\eps_0^{1/2}n^{7/2}\ge 1-2\eps_0^{1/2}n^{9/2}.
\end{align*}
\end{proof}

We show that a modification---replacing $n^{-9/2}$ by a constant $\in (0, 1)$---of the condition of the previous proposition is sufficient if we have i.i.d.~random labels. To do that, we do not prove the converse statement, but a sightly weaker one: we do not guarantee the disagreement of the algorithms for any electric network $(K_n, \c)$ not satisfying the conditions of Proposition \ref{prop:ab_is_prim}, but for a network obtained as an appropriate, slight reordering the labels of $\c$.

\begin{prop}\label{prop:ab_is_not_prim}
Consider an electric network $(K_n, \c)$ satisfying Assumption \ref{assum}, $s\in V(K_n)$, $0<\eps_0<1$ and $0\le m\le n-2$. We use Notation \ref{notation:alg_v-e}, and let $A_{m}$ be the event that the random walk first leaves $W_m$ through $f_{m+1}.$ 

If $\frac{c(h_{m+1})}{c(g_{m+1})}>\eps_0$, then
$$\P_{s}(f_{m+1}=g_{m+1}\given f_j=g_j \;\forall j\le m)=\P_{w_m}(A_{m})\le \frac{1}{1+\eps_0}$$
or
$$\P_{s}^{h_{m+1}\leftrightarrow g_{m+1}}(f_{m+1}=g_{m+1}\given f_j=g_j \;\forall j\le m)=\P_{w_m}^{h_{m+1}\leftrightarrow g_{m+1}}(A_{m})\le \frac{1}{1+\eps_0},$$
where $\P_{s}^{h_{m+1}\leftrightarrow g_{m+1}}$ denotes the probabilities on the electric network with conductances \begin{gather*}c^{h_{m+1}\leftrightarrow g_{m+1}}(e):=\begin{cases}c(h_{m+1})&\text{ if }e=g_{m+1},\\
    c(g_{m+1})&\text{ if }e=h_{m+1},\\
    c(e)&\text{otherwise.}
    \end{cases}
\end{gather*}
\end{prop}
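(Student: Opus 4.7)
The plan is to proceed in three stages. The first stage reduces the two conditional probabilities to exit-edge probabilities of a killed random walk on the respective network: applying the strong Markov property at the time Aldous--Broder completes step $m$, the conditioning $f_j=g_j$ for every $j\le m$ places the underlying random walk at $w_m$ having visited exactly $W_m$, so both conditional probabilities in the statement equal the probability that a fresh walk on the respective network, started from $w_m$ and killed on $\partial W_m$, first exits via the edge $g_{m+1}$. Writing $p:=\P_{w_m}(A_m)$, $p^{\mathrm{sw}}:=\P_{w_m}^{h_{m+1}\leftrightarrow g_{m+1}}(A_m)$, and defining $q, q^{\mathrm{sw}}$ analogously for first exit via $h_{m+1}$, the goal becomes to prove $\min(p, p^{\mathrm{sw}})\le 1/(1+\eps_0)$ under the hypothesis $c(h_{m+1})/c(g_{m+1})>\eps_0$.

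The second stage recasts these quantities in electric-network language. Let $a$ and $a'$ denote the $W_m$-endpoints of $g_{m+1}$ and $h_{m+1}$, and let $G$ (resp.\ $G^{\mathrm{sw}}$) denote the Green's function of the walk killed on $\partial W_m$ in $\c$ (resp.\ $\c^{\mathrm{sw}}$). The standard identity for exit-edge probabilities gives $p = c(g_{m+1})\,G(w_m,a)/c(a)$ and $q = c(h_{m+1})\,G(w_m,a')/c(a')$, with analogous formulas in the swap. The key structural observation is that the swap modifies the walk's transition probabilities only at the two vertices $a$ and $a'$: conditional on \emph{not} exiting through $g_{m+1}$ from $a$, the distribution of the next step from $a$ is the same in $\c$ and $\c^{\mathrm{sw}}$, and symmetrically at $a'$. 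Together with reversibility (so $G(x,y)/c(y)$ is symmetric in $x,y$) this is what allows a clean comparison of the four quantities $p,q,p^{\mathrm{sw}},q^{\mathrm{sw}}$.

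The third stage is a case analysis. If $a=a'$, the swap preserves both $c(a)$ and the walk dynamics inside $W_m$, so $G(w_m,a)$ is invariant under the swap; therefore $q/p = c(h_{m+1})/c(g_{m+1}) > \eps_0$, and the trivial inequality $p+q\le 1$ immediately forces $p\le 1/(1+\eps_0)$, finishing this sub-case without even invoking $\c^{\mathrm{sw}}$. If $a\ne a'$, the swap induces a rank-two Sherman--Morrison-type update of $G$ whose off-diagonal terms couple $a$ and $a'$; tracking this update and combining it with reversibility, with the constraints $p+q\le 1$ and $p^{\mathrm{sw}}+q^{\mathrm{sw}}\le 1$, and with the hypothesis $c(h_{m+1})/c(g_{m+1})>\eps_0$ should yield the required bound. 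The main obstacle is exactly this last case: because the two swapped conductances sit at distinct vertices, the change in the Green's function is non-local and there is no free symmetry between the two networks, so a genuine global argument is needed. The cleanest path forward is probably a two-port reduction of the killed-walk network onto $\{a,a'\}$, which collapses the situation to an explicit $2\times 2$ linear algebra problem whose solution can then be compared directly to $1/(1+\eps_0)$.
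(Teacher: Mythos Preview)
Your stage 1 reduction and the identification $\P_s(f_{m+1}=g_{m+1}\mid f_j=g_j\;\forall j\le m)=\P_{w_m}(A_m)$ are correct, and your treatment of the sub-case $a=a'$ (both boundary edges leaving $W_m$ from the same vertex) is clean and in fact simpler than what the paper does there.

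The genuine gap is the case $a\ne a'$. You do not prove anything in this case; you only say the Sherman--Morrison update ``should yield the required bound'' and that a two-port reduction ``is probably'' the way forward. As written this is a plan, not a proof, and the Green's-function route you sketch is unnecessarily indirect: the rank-two update couples $G(w_m,a)$, $G(w_m,a')$, $G(a,a)$, $G(a,a')$, $G(a',a')$, and there is no evident cancellation that produces the clean constant $1/(1+\eps_0)$ from those formulas alone. The constraints $p+q\le 1$ and $p^{\mathrm{sw}}+q^{\mathrm{sw}}\le 1$ are too weak by themselves, since in the $a\ne a'$ case there is no simple ratio relating $p$ to $q$.

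The paper carries out exactly the two-port reduction you allude to, but with a different (and crucial) choice of variables: it restricts to the network $L'=K_n[W_m]\cup\{g_{m+1},h_{m+1}\}$ and works with \emph{voltages} with boundary conditions at the \emph{outside} endpoints $b_1,b_2$, so that $1-\P_{w_m,L'}(A_m)=v(w_m)$ directly. Because only two edges cross $\partial W_m$, the same current $i$ flows through both, and a short Ohm's-law computation shows that the internal voltage drop $v(a_2)-v(a_1)$ simply flips sign under the swap, forcing the identity
\[
v(w_m)+\tilde v(w_m)\;=\;v(a_2)-v(a_1)+2i\,r(g)\;\ge\;\frac{2r(g)}{r(g)+r(h)}\;\ge\;\frac{2\eps_0}{1+\eps_0}.
\]
Hence at least one of $v(w_m),\tilde v(w_m)$ is $\ge \eps_0/(1+\eps_0)$, i.e.\ at least one of $\P_{w_m,L'}(A_m),\P_{w_m,L'}^{\mathrm{sw}}(A_m)$ is $\le 1/(1+\eps_0)$; adding back the remaining boundary edges only decreases $\P(A_m)$. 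The point you are missing is this additive symmetry between $v(w_m)$ and $\tilde v(w_m)$, which is visible in the voltage picture but obscured in the Green's-function picture.
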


\begin{remark*}
Applying these results w.r.t.~our random conductances $\{\exp(-\beta_nU_e)\}_{e\in E(K_n)}$ in Theorem \ref{thm:ab_vs_inv}, we will get the agreement of the algorithms for $\beta_n\gg n^3\log n$, and disagreement for $\beta_n \ll n^3$. Note that, for $n^3\ll \beta_n\ll n^3\log n$, we cannot compare the algorithms this way, and we will see that this extra $\log n$ factor for $\beta_n$ comes in since we need $\frac{h_{m+1}}{g_{m+1}}<n^{-9/2}$ for the agreement results, and $\frac{h_{m+1}}{g_{m+1}}>\eps_0$ with a fixed $0<\eps_0<1$ for the disagreement results.
\end{remark*}

\begin{proof}
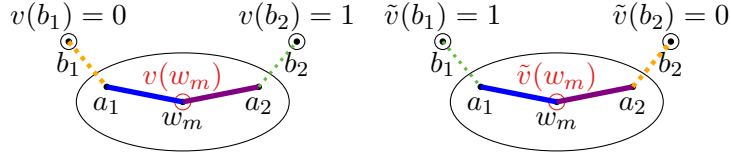
\begin{figure}\label{fig:ab_prim_noteq}
\centering
\begin{tikzpicture}
\filldraw[] (0, 0) circle (1pt) node[anchor=north] (w_k) {$w_m$};
\draw[gray!20!red] (0, 0) circle (3pt) node[anchor=south] {$v(w_m)$};
\filldraw[] (-1, 0.2) circle (1pt) node[anchor=north] (a_1) {$a_1$};
\filldraw[] (-1.5, 0.8) circle (1pt) node[anchor=north] (b_1) {$b_1$};
\draw[] (-1.5, 0.8) circle (3pt) node[anchor=south] {$v(b_1)=0$};
\filldraw[] (1, 0.2) circle (1pt) node[anchor=north] (a_2) {$a_2$};
\filldraw[] (1.5, 0.8) circle (1pt) node[anchor=north] (b_2) {$b_2$};
\draw[] (1.5, 0.8) circle (3pt) node[anchor=south] {$v(b_2)=1$};
\node[ellipse, draw,
    minimum width = 2.8cm, 
    minimum height = 1.3cm] at (0,0) {};

    \draw[orange!70!yellow, line width=1.7pt, dotted]         (-1.5, 0.8) -- (-1, 0.2);
    \draw[blue, line width=2pt]          (-1, 0.2) -- (0, 0);
    \draw[violet, line width=2pt]         (0,0) -- (1, 0.2);
    \draw[teal!50!lime, line width=1.2pt, dotted]         (1, 0.2) -- (1.5, 0.8);
\end{tikzpicture}
\begin{tikzpicture}
\filldraw[] (0, 0) circle (1pt) node[anchor=north] (w_k) {$w_m$};
\draw[gray!20!red] (0, 0) circle (3pt) node[anchor=south] (w_m) {$\tilde{v}(w_m)$};
\filldraw[] (-1, 0.2) circle (1pt) node[anchor=north] (a_1) {$a_1$};
\filldraw[] (-1.5, 0.8) circle (1pt) node[anchor=north] (b_1) {$b_1$};
\draw[] (-1.5, 0.8) circle (3pt) node[anchor=south] {$\tilde{v}(b_1)=1$};
\filldraw[] (1, 0.2) circle (1pt) node[anchor=north] (a_2) {$a_2$};
\filldraw[] (1.5, 0.8) circle (1pt) node[anchor=north] (b_2) {$b_2$};
\draw[] (1.5, 0.8) circle (3pt) node[anchor=south] {$\tilde{v}(b_2)=0$};
\node[ellipse, draw,
    minimum width = 2.8cm, 
    minimum height = 1.3cm] at (0,0) {};

    \draw[teal!50!lime, line width=1.2, dotted] (-1.5, 0.8) -- (-1, 0.2);
    \draw[blue, line width=2pt] (-1, 0.2) -- (0, 0);
    \draw[violet, line width=2pt] (0,0) -- (1, 0.2);
    \draw[orange!70!yellow, line width=1.7pt, dotted] (1, 0.2) -- (1.5, 0.8);
\end{tikzpicture}
\caption{In the proof of Proposition \ref{prop:ab_is_not_prim}, we control the probability of `the Aldous-Broder algorithm does not build the invasion edge in at least one of the electric networks’ by checking that $v(w_m)+\tilde{v}(w_m)$ is big enough. The same color of segments denotes the same difference of voltages.}
\end{figure}

Use the notation $g:=g_{m+1}=(a_1, b_1)$, $h:=h_{m+1}=(a_2, b_2)$, with $a_1, a_2\in W_m$, $L:=E(K_n[W_m])$ and $L':=L\cup(a_1, b_1)\cup (a_2, b_2)$.

Suppose that $b_1\ne b_2$. Consider a potential $v$ w.r.t.~$\P_{s, L'}$ with $v(b_1)=0$, $v(b_2)=1$, and a potential $\tilde{v}$ w.r.t.~$\P^{h\leftrightarrow g}_{s, L'}$ with $\tilde{v}(b_1)=1$, $\tilde{v}(b_2)=0$. We start by giving an upper bound on $v(w_m)+\tilde{v}(w_m)$. Figure \ref{fig:ab_prim_noteq} shows which voltage differences agree in $v$ and in $\tilde{v}$.
The same amount of current flows through $b_1$ (or $b_2)$ in both networks, which is denoted by $i$, and the resistances by $r(e):=\frac{1}{c(e)}$. By Chapter 2 of \cite{PTN},
\begin{align}
v(a_1)-v(b_1)=\tilde{v}(a_2)-\tilde{v}(b_2)&=ir(g),\nonumber\\
v(b_2)-v(a_2)=\tilde{v}(b_1)-\tilde{v}(a_1)&=ir(h),\nonumber\\
v(a_2)-v(a_1)&=-(\tilde{v}(a_2)-\tilde{v}(a_1)),\nonumber\\
1=v(b_2)-v(b_1)&=ir(h)+v(a_2)-v(a_1)+ir(g),\label{formula:potential}\\
i&\le \frac{1}{r(h)+r(g)}\label{formula:currentbound},
\end{align}
where (\ref{formula:currentbound}) comes from (\ref{formula:potential}) by $v(a_2)-v(a_1)\ge 0$, since $v(b_2)-v(b_1)=1$ and the only edges outside $L$ are $(a_1, b_1)$ and $(a_2, b_2)$, implying that a positive amount of current flows from $a_2$ to $a_1$. Then
\begin{gather*}
v(w_m)=v(w_m)-v(a_1)+ir(g)\text{ and }\\
\tilde{v}(w_m)=\left(\tilde{v}(w_m)-\tilde{v}(a_2)\right)+\left(\tilde{v}(a_2)-\tilde{v}(b_2)\right)=v(a_2)-v(w_m)+ir(g),
\end{gather*}
therefore their sum is
\begin{align*}
v(w_m)+\tilde{v}(w_m)&=v(a_2)-v(a_1)+2ir(g)\stackrel{(\ref{formula:potential})}{=}1-i(r(h)-r(g))\\
&\stackrel{(\ref{formula:currentbound})}{\ge} 1-\frac{r(h)-r(g)}{r(h)+r(g)}=\frac{2r(g)}{r(h)+r(g)}\ge \frac{2\eps_0}{1+\eps_0},
\end{align*}
using that $r(h)\ge r(g)\ge  \eps_0 r(h)$. Hence
\begin{gather}
    1-\P_{s, L\cup\{h_{m+1}, g_{m+1}\}}\left(A_{m}\right)=v(w_m)\ge \frac{\eps_0}{1+\eps_0}\ \text{ or }\label{formula:prob_differing}\\
    1-\P_{s, L\cup\{h_{m+1}, g_{m+1}\}}^{h_{m+1}\leftrightarrow g_{m+1}}\left(A_{m}\right)=\tilde{v}(w_m)\ge \frac{\eps_0}{1+\eps_0} \label{formula:prob_differing_2}.
\end{gather}

Running the random walk on $K_n$ instead of $L\cup\{h_{m+1}, g_{m+1}\}$, the probability of $A_m$ can only decrease, resulting in the inequalities of the proposition.

If $b_1=b_2$, then we can add a new vertex $b'$ with $c(a_2, b'):=c(a_2, b_2)$ and repeat the argumentation on $L\cup (a_1, b_1)\cup (a_2, b')$ instead of $L'$ to obtain (\ref{formula:prob_differing}) and (\ref{formula:prob_differing_2}).
\end{proof}

\subsection{Agreement of the models and the conductances on the fundamental cycles}
\label{subsec:determ_wst_vs_mst}
If there exists a spanning tree $T\ne \maxst(\c)$ with $\prod_{e\in T}c(e)\ge \frac{1}{2}\prod_{e\in \maxst(\c)}c(e)$, then by estimating the partition function $Z(\c)$, it is clear that $\P(\wst(\c)=\maxst(\c))\le \frac{1}{1+1/2}=\frac{2}{3}$. 

How can one find a tree satisfying this condition? If there is an edge $(u_1, u_2)=e\notin \maxst(\c)$ such that the unique path in $\maxst(\c)$ connecting $u_1$ and $u_2$ contains an edge $f$ with $\frac{1}{2}c(f)\le c(e)$, then $T=\maxst\setminus\{f\}\cup \{e\}$ is such a tree.

Interestingly, in Subsection \ref{subsec:wst_vs_mst}, it turns out that this naive reasoning and Lemma \ref{lemma:middlecond} together on our random electric network $\c:=\left\{\exp(-\beta_nU_e)\right\}_{e\in E(K_n)}$ are strong enough to prove that $\P\left(\wst(\c)=\maxst(\c)\right)$ tends to 0 for $\beta_n\ll n^2$ and tends to 1 for $\beta_n\gg n^2\log n$.

\begin{defi}
    Consider a spanning tree $T$ of $K_n$ and an $e=(u, v)\notin T$. 
    
    We denote by $\path_{T}(u, v)$ the unique path connecting $u$ and $v$ in $T$.
    
    \emph{The fundamental cycle} of the edge $(u, v)$ is $\path_{T}(u, v)\cup (u, v)$.
\end{defi} 

\begin{defi}
    We call an edge $e=(u,v)\notin \maxst(c)$ \emph{$\eps$-significant} if $\exists f\in \path_{\maxst(c)}(u, v)$ satisfying $\frac{c(e)}{c(f)}
    \ge \eps$. The \emph{set of $\eps$-significant external edges} is denoted by $E_{\mathrm{sig}}(\eps):=\{e\notin \maxst(c):\; e\text{ is $\eps$-significant}\}$.
\end{defi}

\begin{prop}\label{prop:wst_vs_mst}
Consider an electric network $(K_n, \c)$ satisfying Assumption \ref{assum} and $\eps_0\in (0, 1)$.
    \begin{itemize}
        \item[a)] If $\left|E_{\mathrm{sig}}\left(\eps_0\right)\right|>0$, then
        $$\P(\wst(\c)=\maxst(\c))\le \frac{1}{1+ \left|E_{\mathrm{sig}}\left(\eps_0\right)\right|\eps_0}.$$
        \item[b)] We have that
        $$\E[|E(\wst(\c))\setminus E(\maxst(\c))|]\le \left|E_{\mathrm{sig}}\left(\eps_0\right)\right|+\eps_0^{1/2}n^{11/2}.$$
        Consequently, if $\left|E_{\mathrm{sig}}\left(\eps_0\right)\right|=0$, then $$\P(\wst(\c)=\maxst(\c))\ge 1-\eps_0^{1/2}n^{11/2}.$$
    \end{itemize}
\end{prop}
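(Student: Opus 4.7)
For part (a), the plan is to lower-bound the partition function $Z(\c)$ by exhibiting many spanning trees whose weight is comparable to that of $\maxst(\c)$. For each $\eps_0$-significant external edge $e = (u, v)$ I pick an $f_e \in \path_{\maxst(\c)}(u, v)$ with $c(e)/c(f_e) \ge \eps_0$, and set $T_e := (\maxst(\c) \setminus \{f_e\}) \cup \{e\}$. By the fundamental cycle exchange, $T_e$ is a spanning tree, and since $e$ is the unique edge of $T_e$ outside $\maxst(\c)$, distinct significant $e$'s produce distinct $T_e$'s. Each satisfies $\prod_{e' \in T_e} c(e') \ge \eps_0 \prod_{e' \in \maxst(\c)} c(e')$, so
$$Z(\c) \ge \bigl(1 + |E_{\mathrm{sig}}(\eps_0)|\,\eps_0\bigr)\prod_{e' \in \maxst(\c)} c(e'),$$
and the claimed bound on $\P(\wst(\c) = \maxst(\c))$ follows directly from the definition of $\wst$.

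For part (b), I would write
$$\E\bigl[|E(\wst(\c)) \setminus E(\maxst(\c))|\bigr] = \sum_{e \notin \maxst(\c)} \P\bigl(e \in \wst(\c)\bigr),$$
split the sum according to $\eps_0$-significance, bound the significant part trivially by $|E_{\mathrm{sig}}(\eps_0)|$, and apply Lemma \ref{lemma:middlecond} to each of the at most $\binom{n}{2}$ non-significant edges. For a non-significant $e = (u, v)$, I set $a_e := \min_{f \in \path_{\maxst(\c)}(u, v)} c(f)$ and invoke Notation \ref{notation:bcond} with $a = a_e$ and the given $\eps_0$. Non-significance gives $c(e) < \eps_0 a_e$, so $e \in E_{\scond}$, while the $\maxst(\c)$-path from $u$ to $v$ lies entirely in $E_{\bcond}$, placing $v \in V(\C_u^{E_{\bcond}})$. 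The key observation is that $\wst(\c)$ is a tree, so if $e \in \wst(\c)$ then $e$ itself is the unique $u$-to-$v$ path in $\wst(\c)$, and removing it disconnects $u$ from $v$ in $E(\wst(\c)) \setminus E_{\scond}$. Lemma \ref{lemma:middlecond} therefore bounds $\P(e \in \wst(\c))$ by $\eps_0^{1/2} n^{7/2}$, and summing yields
$$\E\bigl[|E(\wst(\c)) \setminus E(\maxst(\c))|\bigr] \le |E_{\mathrm{sig}}(\eps_0)| + \binom{n}{2}\eps_0^{1/2} n^{7/2} \le |E_{\mathrm{sig}}(\eps_0)| + \eps_0^{1/2} n^{11/2}.$$
The consequence under $|E_{\mathrm{sig}}(\eps_0)| = 0$ then follows from Markov's inequality applied to the integer-valued quantity $|E(\wst(\c)) \setminus E(\maxst(\c))|$.

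I do not anticipate a serious obstacle here: part (a) is a pure partition-function computation, and part (b) is essentially bookkeeping on top of Lemma \ref{lemma:middlecond}. The one point requiring care is that $E_{\bcond}$ and $E_{\scond}$ depend on the choice of the threshold $a$, so the lemma must be invoked separately for each non-significant edge $e$ with its own $a_e$, and then the tree structure of $\wst(\c)$ must be used to convert the ``$u$ and $v$ in the same component of $E(\wst(\c)) \setminus E_{\scond}$'' conclusion of the lemma into the statement ``$e \notin \wst(\c)$''.
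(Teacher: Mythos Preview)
Your proposal is correct and follows essentially the same route as the paper: part (a) is the identical partition-function lower bound via edge swaps along fundamental cycles, and part (b) is the same decomposition into significant and non-significant external edges with Lemma~\ref{lemma:middlecond} applied edge-by-edge. The only cosmetic difference is that the paper invokes the hitting-time form (\ref{formula:hitting}) at the first explored path vertex $v_k$, whereas you invoke the equivalent consequence (\ref{formula:relaxed_greedy}) directly at $t=u$ and use the tree structure of $\wst(\c)$; both yield the same $\eps_0^{1/2}n^{7/2}$ bound per edge.
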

\begin{proof}
    a) This follows from $\P(\wst(\c)=\maxst(\c))=Z(\c)^{-1} \prod_{f\in \maxst(\c)}c(f)$ since \begin{gather*}
        Z(\c)=\sum_{T\text{ is a spanning tree}}\prod_{f\in T}c(f)\ge \sum_{\substack{T \text{ is a spanning tree}\\ |E(T)\setminus E(\maxst(\c))|\le 1}}\prod_{f\in T}c(f)\ge (1+\left|E_{\mathrm{sig}}\left(\eps_0\right)\right|\eps_0)\prod_{f\in \maxst(c)}c(f).
    \end{gather*}

    b) For an external edge $e_{\mathrm{ext}}=(u_1, u_2)\notin E(\maxst(\c))\cup E_{\mathrm{sig}}(\eps_0))$, we choose $k$ such that $v_k$ is the first vertex of $\path_{\maxst(\c)}(u_1,u_2)$
     explored by the Aldous-Broder algorithm. Let $a:=\min_{f\in\path(u_1, u_2)}c(f)$, $E_{\bcond}:=\{e\in E(K_n)\given c(e)\ge a\}$ and $E_{\scond}:=\{e\in E(K_n)\given c(e)\le \eps_0 a\}$. 
     
     Then $e_{\mathrm{ext}}\in E_{\scond}$ as it is not significant, so by Lemma \ref{lemma:middlecond} for $t:=v_k$, all the vertices of $\path_{\maxst(\c)}(u_1,u_2)$ are explored before crossing $e_{\mathrm{ext}}$ with probability $\ge 1-\eps_0^{1/2}n^{7/2}$. Then,
\begin{align*}
    \E\left[|E(\wst(\c))\setminus E(\maxst(\c))|\right]&\le \left|E_{\mathrm{sig}}\left(\eps_0\right)\right|+\sum_{e_{\mathrm{ext}}\in E(K_n)\setminus (E(\maxst(\c))\cup E_{\mathrm{sig}}(\eps_0))}\P(e_{\mathrm{ext}}\in \wst(\c))\\
&\le \left|E_{\mathrm{sig}}\left(\eps_0\right)\right|+\binom{n}{2}\eps_0^{1/2}n^{7/2} \le \left|E_{\mathrm{sig}}\left(\eps_0\right)\right|+\eps_0^{1/2}n^{11/2}.
\end{align*}
By $\P(\wst(\c)\ne \maxst(\c))\le \E\left[|E(\wst(\c))\setminus E(\maxst(\c))|\right]$, The second inequality follows.
\end{proof}

\section{\texorpdfstring{$\wst^{\beta_n}_n$}{} on random electric networks for large \texorpdfstring{$\beta_n$}{}'s and the Erdős-Rényi graph}
\label{sec:wst_vs_mst}

In this section, we study how the results of Section \ref{sec:det} can be applied to the random electric network $\c=\{\exp(-\beta_nU_e)\}_{e\in E(K_n)}$.

We determine in terms of the $U_e$'s the possible edges that the Aldous-Broder algorithm can use as it builds $\wst_{n}^{\beta_n}$. As $\beta_n$ grows more and more slowly, one can give a characterization of this larger and larger edge set. All the descriptions listed below are meant to hold with probability $\rightarrow 1$.
 \begin{itemize}
     \item[$\circ$] If $\beta_n\gg n^{3}\log n$, then in each building step, the Aldous-Broder algorithm can use only the current edge of Prim's invasion algorithm. We also show that this is not true for $\beta_n\ll n^{3}$.
    \item[$\circ$] If $\beta_n\gg n^{2}\log n$, then the Aldous-Broder algorithm can choose only from the edges of $\mst_n$ whenever it builds an edge. We also prove that this does not hold for $\beta_n\ll n^{2}$.
    \item[$\circ$] If $\beta_n\gg n^{4/3}\log n$, then denoting by $H_{n, p}$ the largest connected component of the Erdős-Rényi graph $G_{n, p}$, one can give some $p_0\le p_{1/2} \le p_1\le \ldots \le p_{t_n}$ such that $H_{n, p_0}\subseteq H_{n, p_{1/2}} \subseteq H_{n, p_1} \subseteq \ldots \subseteq H_{n, p_{t_n}}$, the Aldous-Broder algorithm uses edges only from $H_{n, p_{i+1/2}}$ until $V(H_{p_{i-1}})$ is not completely explored, and $H_{n, p_i}[ V(K_n)\setminus V(H_{n, p_{i-1}})]$ contains only short paths for any $i$. As detailed later, this is the key tool to determine the growth of the diameter of the $\wst^{\beta_n}_n$. The reasoning is based on \cite{addario2009critical}.
    \item[$\circ$] If $\beta_n\gg n\log n$, then the label of any edge built by the Aldous-Broder algorithm can be at most $\frac{\kappa \log n}{\beta_n}$ larger than the minimal option in each building step with probability $\rightarrow 1$ really fast. Hence, the sum of $U_e$'s in the $n-1$ steps differ at most    $\sum_{e\in \wst_n^{\beta_n}}U_e-\sum_{e\in \mst_n}U_e=(n-1)O(\frac{\log n}{\beta_n})\rightarrow 0$.
 \end{itemize}

\subsection{Heuristics behind the different phases for large \texorpdfstring{$\beta_n$}{}'s}\label{subsec:ideas}
We collect the ideas of this section here, giving some intuitive arguments on why the phase transitions around $\beta_n\approx n^3$ and $\beta_n\approx n^2$ happen and how the growth of the diameter for $\beta_n\gg n^{4/3}\log n$ and the expected total length for $\beta_n\gg n\log n$ are obtained. 
	\paragraph{Our random conductances and uniform labels}
	
	For our random $c(e)=\exp(-\beta_nU_e)$ and any $g, e \in E(K_n)$, the ratio of the conductances can be written as  
	$\frac{c(e)}{c(g)}=\exp(-\beta_n(U_e-U_g))$, so 
	\begin{itemize}
		\item[$\circ$] $\frac{1}{2}<\frac{c(e)}{c(g)}<1 \Longleftrightarrow \frac{\log 2}{\beta_n}>U_e-U_g>0$, and
		\item[$\circ$] $\frac{c(e)}{c(g)}<n^{-\kappa}\Longleftrightarrow U_e-U_g>\frac{\kappa \log n}{\beta_n}$ for any $\kappa>0$.
	\end{itemize}
	We call $\frac{\log 2}{\beta_n}>U_e-U_g>0$ the case of \emph{comparable} conductances and $\frac{c(e)}{c(g)}<n^{-\kappa}$ the case of \emph{distinguishable} conductances, for an appropriately chosen $\kappa>0$.

The first idea is that one can use Proposition \ref{prop:ab_is_prim} if $|U_e-U_g|$ is big for any $e\ne g$. Therefore it is reasonable to study the ordered sample of the labels.

 \begin{notation*}We denote by $U_1^{*, M}\le \ldots \le U_{M}^{*, M}$ the ordered sample of $M$ i.i.d.~$\unif[0,1]$ random variables.
\end{notation*}

\begin{lemma}\label{lemma:size_of_gaps}
Consider some integers $0<i<M$ and $0<\delta<1$ satisfying $ M\delta\le 0.35$. Then $$\frac{M\delta}{2}\le  \P\left(U_{i+1}^{*, M}-U_{i}^{*, M}<\delta\right)\le 2M\delta.$$
\end{lemma}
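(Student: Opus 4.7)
My approach would be to reduce the problem to the well-understood distribution of a single spacing by exploiting exchangeability. The $M+1$ consecutive spacings
$$\bigl(U_1^{*, M},\ U_2^{*, M}-U_1^{*, M},\ \ldots,\ U_M^{*, M}-U_{M-1}^{*, M},\ 1-U_M^{*, M}\bigr)$$
form an exchangeable random vector, so in particular the distribution of $U_{i+1}^{*,M}-U_i^{*,M}$ does not depend on $i$ and coincides with the distribution of $U_1^{*,M}$. The quickest justification of exchangeability is the standard representation via i.i.d.~exponentials: if $E_1,\ldots,E_{M+1}$ are i.i.d.\ $\mathrm{Exp}(1)$ and $S:=\sum_{j=1}^{M+1}E_j$, then $(E_1/S,\ldots,E_{M+1}/S)$ has the same law as the spacing vector above and is manifestly exchangeable in its coordinates.

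Since $\P\bigl(U_1^{*,M}>x\bigr)=\P\bigl(\min_{j}U_{j}>x\bigr)=(1-x)^M$ for $x\in[0,1]$, the problem reduces to proving the elementary two-sided bound
$$\tfrac{M\delta}{2}\ \le\ 1-(1-\delta)^M\ \le\ 2M\delta.$$
For the upper bound, Bernoulli's inequality $(1-\delta)^M\ge 1-M\delta$ gives $1-(1-\delta)^M\le M\delta\le 2M\delta$; notice that the hypothesis $M\delta\le 0.35$ is not even needed here. For the lower bound I would first use $(1-\delta)^M\le e^{-M\delta}$, reducing the task to checking $1-e^{-y}\ge y/2$ for $y:=M\delta$. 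This is a standard inequality: the function $g(y):=1-e^{-y}-y/2$ vanishes at $y=0$ and has derivative $g'(y)=e^{-y}-\tfrac12\ge 0$ exactly on $[0,\log 2]$, so $g\ge 0$ on $[0,\log 2]$. Since the hypothesis gives $M\delta\le 0.35<\log 2$, the lower bound follows.

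There is essentially no serious obstacle. The only things worth being careful about are (i) justifying that $U_{i+1}^{*,M}-U_i^{*,M}$ has the same law as $U_1^{*,M}$, which the exponential representation handles in one line, and (ii) arranging the elementary inequalities so that the constants $1/2$ and $2$ in the statement come out cleanly — the crude bounds $(1-\delta)^M\le e^{-M\delta}$ and $(1-\delta)^M\ge 1-M\delta$ are precisely what is needed, with room to spare.
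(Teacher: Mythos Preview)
Your proof is correct and follows essentially the same approach as the paper: reduce to the distribution of $U_1^{*,M}$ via exchangeability of the spacings, then bound $1-(1-\delta)^M$ using the elementary inequalities $1-x\le e^{-x}\le 1-x/2$ on the relevant range. Your upper bound via Bernoulli's inequality is in fact slightly sharper (it gives $M\delta$ rather than $2M\delta$), but otherwise the arguments coincide.
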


\begin{proof}
It is known that the distribution of gaps of the ordered sample are the same,
$U_{i+1}^{*, M}-U_{i}^{*, M}\sim U_{1}^{*, M}\;\forall i$, and $\P\left(U_{1}^{*, M}>\delta \right)=\left(1-\delta\right)^{M}.$ 

Since $1-x\le e^{-x}\le 1-\frac{x}{2}$ for $0<x<0.7$, for $2M\delta\le 0.7$ we have that
\begin{gather*}
1-2M\delta\le e^{-2M\delta}\le \left(1-\delta\right)^{M}=\P\left(U_{i+1}^{*, M}-U_{i}^{*, M}>\delta\right)\le e^{-M\delta}\le 1-\frac{M\delta}{2}.
\end{gather*}
\end{proof}
 
\paragraph{Heuristics behind Theorem \ref{thm:ab_vs_inv} a)} For $\beta_n\gg n^{3}\log n$, we check that Proposition \ref{prop:ab_is_prim} with any fixed $\kappa>0$ and $\eps_0:=n^{-\kappa}$ can be applied to our random electric network in every building step with probability $\rightarrow 1$. 

\begin{proof}[Proof of Theorem \ref{thm:ab_vs_inv}. a)] As in Notation \ref{notation:alg_v-e}, $g_j$ and $h_j$ are the edges from $\partial W_j$ with the largest and second largest conductances. We write $U_i^*:=U_i^{*,\binom{n}{2}}$, and introduce $\{i(j)\}_{j\le n-1}$ to be the indices of $g_j$ in the ordered sample, i.e.,~$U_{g_j}=U_{i(j)}^*$. 

Since $U_{g_j}=U_{i(j)}^*$ and
$U_{h_j}\ge U_{i(j)+1}^*$, we have that $U_{h_j}-U_{g_j}\ge U_{i(j)+1}^*- U_{i(j)}^*$, and the distribution of the latter can be easily handled. Indeed, Prim's invasion algorithm depends only on the ordering of the conductances and not on the exact values of them, so the distribution $(U_{i(j)}^*, U_{i(j)+1}^*)$ given $i(j)=i$ is simply $(U_{i}^*, U_{i+1}^*)$ if $\P(i(j)=i)>0$.

Therefore, for $\{\exists\textrm{ a small gap}\}:=\{\exists j\le n-1: \;U_{h_j}-U_{g_j}< \frac{\kappa\log n}{\beta_n}\}$,
\begin{equation}\label{formula:number_small_gaps}
\begin{aligned}
\P(\exists\text{ a small gap})&\le \P\left(\exists j\le n-1 :\;U_{i(j)+1}^{*}-U_{i(j)}^{*}<\frac{\kappa \log n}{\beta_n}\right)\\
&\stackrel{\cup\text{-bound}}{\le} \sum_{j=1}^{n-1}\P\left(U_{i(j)+1}^{*}-U_{i(j)}^{*}<\frac{\kappa \log n}{\beta_n}\right)\\
&\stackrel{\text{Lemma \ref{lemma:size_of_gaps}}}{\le} (n-1)2\binom{n}{2}\frac{\kappa \log n}{\beta_n}= \Theta\left(n^3\frac{\log n}{\beta_n}\right)\rightarrow 0
\end{aligned}
\end{equation}
as $n\rightarrow\infty$ for $\beta_n \gg n^3\log n$.

We obtained that the conditions of Proposition \ref{prop:ab_is_prim} are satisfied for $\eps_0:=n^{-11}$ with probability $\rightarrow 1$. This choice of $\eps_0$ guarantees that $\eps_0^{1/2}n^{9/2}=n^{-1}\rightarrow 0$ as $n\rightarrow \infty$ and then the probability of the existence of a step when the two algorithms build different edges is at most $\Theta\left(\frac{n^3\log n}{\beta_n}\right)+\frac{1}{n}\rightarrow 0$.
\end{proof}

\paragraph{Heuristics behind Theorem \ref{thm:ab_vs_inv} b)} For $\beta_n\ll n^3$, we are showing that, with probability $\rightarrow 1$, there exists a step when the two algorithms build different edges. 

Using Notation \ref{notation:alg_v-e}, if we are able to check that there are $k_n\rightarrow\infty$ steps $j$ such that $g_j$ and $h_j$ are comparable, then Proposition \ref{prop:ab_is_not_prim} implies that the probability of `the two algorithms always build the same edges’ is at most
$c^{k_n}\rightarrow 0$ for a $c<1$.

In the proof of Proposition \ref{prop:number_of_small_gaps}, we show for some fixed $a_1, a_2\in (0, 1)$ the existence of $\Theta(n)$ building steps $ j\in [a_1n,a_2n]$  such that the distribution of the labels of $\partial W_j$, conditioned on what has happened until that point, are i.i.d.~$\unif[p_j, 1]\approx\unif[0, 1]$ for a common (random) small $p_j$. The edge boundary $|\partial W_j|$ is at least $a_1a_2n^2$ for $j\in [a_1n,a_2n]$, so
\begin{equation}\label{formula:number_small_gaps_second}
\begin{aligned}
\E\left[\text{\# of $j$'s: $h_j, g_j$ are comparable}\right]&\ge 
\sum_{j=[a_1n]}^{[a_2n]}\P\left(U_{h_j}-U_{g_j}<\frac{\log 2}{\beta_n}\right)\\& \stackrel{\text{Lemma \ref{lemma:size_of_gaps}}}{\ge}\Theta(n)\Theta\left(a_1a_2n^2\frac{\log 2}{\beta_n}\right)= \Theta\left(\frac{n^3}{\beta_n}\right)\rightarrow \infty
\end{aligned}
\end{equation}
for $\beta_n\ll n^3$ as $n\rightarrow\infty$. Note that this computation also shows that in (\ref{formula:number_small_gaps}) studying $U_{i(j)+1}^*-U_{i(j)}^*$ for $n-1$ steps instead of $U_{h_j}-U_{g_j}$ for $\Theta(n)$ steps modifies our result only by a constant factor, and the extra $\log n$ factor in  (\ref{formula:number_small_gaps}) compared to (\ref{formula:number_small_gaps_second}) comes in since we study different sizes of gaps of labels.

We will also see that the $\{U_{h_j}-U_{g_j}\}_j$'s behave like independent random variables in these $\Theta(n)$ steps, hence we can indeed use Proposition \ref{prop:ab_is_not_prim} $k_n\rightarrow\infty$ times and get that the algorithms differ with probability $\rightarrow 1$. Because of this independence for $\Theta(n)$ steps, it is not surprising that the union bound captures the reality in (\ref{formula:number_small_gaps}). 

Notice that (\ref{formula:number_small_gaps}) and (\ref{formula:number_small_gaps_second}) are strong enough to show us that the phase transition regarding the algorithms happens around $\beta_n\approx |V(K_n)||E(K_n)|$ up to a $\log n$ factor.

How do we find these $\Theta(n)$ steps? If $W_j$ is the vertex set of a connected component of the Erdős-Rényi graph $G_{n, p}$ with edges $E(G_{n, p})=\{e\given U_e\le p\}$ for a $p$, then the labels of $\partial W_j$ can indeed be thought as i.d.d.~$\unif[p, 1]$ random variables. This situation happens for example right after an isolated vertex of $G_{n, q}$ is joined to the giant component of $G_{n, q}$ by the invasion algorithm, which occurs $\Theta(n)$ times.
	
\paragraph{Heuristics behind Theorem \ref{thm:wst_vs_mst} a)} 
For $n^2\log n\ll \beta_n\ll n^3$, it turns out that  the Aldous-Broder and Prim's invasion algorithm add the same edges, but in different orderings. To prove this, we check for any fixed $\kappa$, that there do not exist any $n^{-\kappa}$-significant external edges of $\mst_n$ with probability $\rightarrow 1$. Then Proposition \ref{prop:wst_vs_mst} b) implies that $\P(\wst_n^{\beta_n}=\mst_n)\rightarrow 1$.

The significance of an edge for our random conductances $\c=\{\exp(-\beta_n U_e)\}_{e\in E(K_n)}$ can be phrased with the $U_e$'s: an $e=(u,v)\notin \mst_n$ is $n^{-\kappa}$-significant if there exists an $f\in \path_{\mst_n}(u, v)$ satisfying $U_f-U_e\le \frac{\kappa \log n}{\beta_n}$. The number of significant edges becomes easily understood under appropriate conditioning.
\begin{lemma}\label{lemma:conditioning}
Under conditioning on $\{(f, U_f)\}_{f\in E(\mst_n)}$, we have that $\{U_{e}\}_{e\notin E(\mst_n)}$ are independent $\unif[m_e, 1]$ random variables, where $m_e:=\max_{f\in \path_{\mst_n}(u, v)}U_f$ for $e=(u, v)$.
\end{lemma}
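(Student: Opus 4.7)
My plan is to reduce the statement to the \emph{cycle property} of the minimum spanning tree and then read off the conditional density directly. First I would recall/verify the cycle characterization: almost surely, a spanning tree $T$ of $K_n$ equals $\mst_n$ if and only if, for every non-tree edge $e=(u,v)\notin T$,
\[
U_e \;>\; \max_{f\in \path_T(u,v)} U_f .
\]
The ``only if'' direction is the standard exchange argument (otherwise $T\setminus\{f\}\cup\{e\}$ would be a spanning tree with strictly smaller label sum, contradicting minimality), and the ``if'' direction follows from uniqueness of the $\mst$ once all labels are distinct, which holds almost surely by Assumption \ref{assum}.

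Next I would compute the conditional density. Fix a spanning tree $T$ of $K_n$ and values $\{x_f\}_{f\in T}\in[0,1]^T$, and condition on the event $\{E(\mst_n)=T,\;U_f=x_f\text{ for all } f\in T\}$. By the cycle characterization above, conditionally on these labels the event $\{\mst_n = T\}$ is exactly $\{U_e > m_e \text{ for all } e\notin T\}$, where the thresholds
\[
m_e \;:=\; \max_{f\in \path_T(u,v)} x_f \qquad (e=(u,v)\notin T)
\]
are deterministic functions of the conditioning alone. Since the unconditional law of $\{U_e\}_{e\notin T}$ is uniform on $[0,1]^{E(K_n)\setminus T}$ and independent of the tree-edge labels, Bayes' rule gives the conditional density
\[
\frac{\prod_{e\notin T}\mathds{1}_{[m_e,1]}(U_e)}{\prod_{e\notin T}(1-m_e)} \;=\; \prod_{e\notin T}\frac{\mathds{1}_{[m_e,1]}(U_e)}{1-m_e},
\]
which is precisely the product of densities of independent $\unif[m_e,1]$ random variables.

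The only substantive step is the cycle characterization; once it is in hand, the rest is a routine density computation because the characterizing event factorizes cleanly over the non-tree edges, and because each $m_e$ is measurable with respect to the conditioning $\sigma$-algebra $\sigma\bigl(\{(f,U_f)\}_{f\in E(\mst_n)}\bigr)$. The mild care needed is to interpret this conditioning correctly as joint conditioning on the random edge set $E(\mst_n)$ together with the random tree labels $\{U_f\}_{f\in E(\mst_n)}$, so that the thresholds $m_e$ become constants after conditioning.
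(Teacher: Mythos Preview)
Your proposal is correct and essentially identical to the paper's proof: both reduce the conditioning to the cycle characterization of the $\mst$ (the paper phrases the ``if'' direction via Prim's algorithm, you via the exchange argument and uniqueness) and then observe that the event $\{\mst_n=T\}$ given the tree labels is the product event $\prod_{e\notin T}\{U_e>m_e\}$, so the conditional law is uniform on the rectangle $\prod_{e\notin T}[m_e,1]$.
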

\begin{proof}
Under this conditioning, $U_e>m_e$ must hold for any $e\notin \mst_n$. On the other hand, if $U_e>m_e$ is satisfied for any $e\notin \mst_n$, then Prim's invasion algorithm gives us the edges of $\mst_n$. 

Therefore, the joint distribution of $\{U_e\}_{e\notin E(\mst_n)}$ under conditioning on $\{(f, U_f)\}_{f\in E(\mst_n)}$ can be obtained simply by conditioning the $\unif\left(\prod_{e\notin E(\mst)} [0, 1]\right)$ random variable to
 the rectangle
$\prod_{e\notin E(\mst)}[m_e, 1]$, giving us independent $\unif[m_e, 1]$ random variables for the marginals.
\end{proof}

We will see that $m_e\ll 1$ for $\forall e\notin \mst_n$, so
considering the random variables $\chi_e\sim \unif[m_e, 1]$, we have that $\P(e\text{ is $n^{-\kappa}$-significant})=\P\left(\chi_e-m_e<\frac{\kappa \log n}{\beta_n}\right)\sim \frac{\kappa \log n}{\beta_n}$. By Proposition \ref{prop:wst_vs_mst} b), for $\kappa>11$, we have that $\P(\wst^{\beta_n}_n\ne  \mst_n)\le \P(\exists n^{-\kappa}\text{-significant edges})+o(n^{-11/2}n^{11/2})$. Then, by the union bound,
\begin{equation*}
    \begin{aligned}
\P\left(\wst_n^{\beta_n}\ne \mst_n\right)&=o(1)+\P(\exists n^{-\kappa}\text{-significant edges})\le
\sum_{e\notin \mst_n}\P(e\text{ is $n^{-\kappa}$-significant})+o(1)\\
&\le \binom{n}{2}\Theta\left(\frac{\kappa\log n}{\beta_n}\right)+o(1)=\Theta\left(\frac{n^2\log n}{\beta_n}\right)+o(1)\rightarrow 0
\end{aligned}
\end{equation*}
for $\beta_n\gg n^2\log n$ as $n\rightarrow \infty$.

This computation also shows for $\beta_n\gg n\log n$, that
\begin{equation}\label{formula:expected_diff}
    \begin{aligned}
    \E\left[\left|E(\wst^{\beta_n}_n)\setminus E(\mst_n)\right|\right]&\le \E\left[ \left|\left\{n^{-\kappa}\text{-significant edges}\right\}\right|\right]+o(1)\\
    &=\Theta\left(\frac{n^2\log n}{\beta_n}\right)+o(1)=o(n),    
\end{aligned}
\end{equation}
and that for $X\sim \unif(V(K_n))$, the edge sets $\{(X, u): \;u\in V(K_n)\}$ and $E(\wst^{\beta_n}_n)\setminus E(\mst_n)$ are disjoint with probability $\rightarrow 1$, which will be useful when we compare the expected total lengths of $\wst^{\beta_n}_n$ and $\mst_n$, as we will see later.

\paragraph{Heuristics behind Theorem \ref{thm:wst_vs_mst} b)} If $\beta_n\ll n^2$, then of course 
\begin{gather}\label{formula:prob_wstnotmst}
\E\left[\left|\left\{\text{$\frac{1}{2}$-significant edges}\right\}\right|\right]=\sum_{e\notin \mst_n}\P\left(e\text{ is $\frac{1}{2}$-significant}\right)=\Theta\left(n^2\right)\frac{\log 2}{\beta_n}=\Theta\left(\frac{n^2}{\beta_n}\right)\rightarrow \infty.
\end{gather}

Under conditioning on $\{(f, U_f)\}_{f\in E(\mst_n)}$, the $\frac{1}{2}$-significance of the edges $e\notin \mst_n$ are independent of each other by Lemma \ref{lemma:conditioning}, so $\P\left(\text{\# of $\frac{1}{2}$-significant edges}\ge k_n\right)\rightarrow 1$ is easy for some $k_n\rightarrow \infty$.

These computations and Proposition \ref{prop:wst_vs_mst} imply that the phase transition regarding the agreement of the models happens around $\beta_n\approx |E(K_n)|$ up to a logarithmic factor.

\paragraph{Heuristics behind Theorem \ref{thm:length} a) and b)} Typically $\max_{e\in \mst_n}U_e\le \frac{2\log n}{n}$, therefore for $\beta_n\gg n\log^2 n$, any $n^{-\kappa}$-significant edge has label $\le \frac{2\log n}{n}+\frac{\kappa\log n}{\beta_n}=\Theta\left(\frac{\log n}{n}\right)$ and
\begin{align*}
\E\left[L(\wst^{\beta_n}_n)\right]\stackrel{(\ref{formula:expected_diff})}{\le} \E\left[L(\mst^{\beta_n}_n)\right]+\Theta\left(\frac{\log n}{n}\right)\Theta\left(\frac{n^2\log n}{\beta_n}\right)=\zeta(3)+o(1).
\end{align*}
In Subsection \ref{subsec:mst_length}, we give an other proof which works even for $\beta_n\gg n\log n$.

\paragraph{Ideas behind Theorem \ref{thm:has_diam_mst}} For $n^{4/3}\log n\ll \beta_n\ll n^2$, it turns out that although we have $\P(\wst_n^{\beta_n}\ne \mst_n)\rightarrow 1$, their global geometries behave similarly: in \cite{addario2009critical} it is shown that $\E[\diam(\mst_n)]=\Theta(n^{1/3})$ and we are proving that $\diam(\wst_n^{\beta_n})$ is typically $\Theta(n^{1/3})$, as well.

The connection between $\mst_n$ and the Erdős-Rényi graphs is that the connected components of $G_{n,p}$ and of $\mst_n\cap G_{n,p}$ are the same for any $p\in [0, 1]$. Indeed, for any $u, v\in V(K_n)$ if there is a path between $u$ and $v$ using only edges $e$ with $U_e\le p$, then Prim's invasion algorithm connects $u$ and $v$ with a path containing edges $e$ with $U_e\le p$. 

Denote by $H_{n,p}$ the largest connected component of $G_{n,p}$, and by $\lp$ the length of the longest path. In order to simplify the formulation, now we cheat a little bit and suppose that $H_{n, p}\subseteq H_{n, p'}$ for any $\frac{1}{n}\le p\le p'$,
which in reality holds only for $p-\frac{1}{n}\gg \frac{1}{n^{4/3}}$ by \cite{luczak1990component}. In Subsection \ref{subsec:wst_diam_abr}, we will deal with this technical detail. 

To study the growth of the diameter of $\mst_n$ in \cite{addario2009critical}, $p_i=\frac{1}{n}+\left(\frac{5}{4}\right)^i\frac{1}{n^{4/3}}$ was chosen interpolating between the critical $G_{n,p_0}$ and the supercritical $G_{n,\frac{\log n}{n}}$ Erdős-Rényi graphs. They have shown the following:
\begin{itemize}
\item[a)]  Inside the critical $H_{n, p_0}$: $\diam(\mst_n\cap H_{n, p_0})\le \lp(H_{n, p_{0}})=O(n^{1/3})$ with probability $\rightarrow 1$.
\item[b)] From critical $H_{n, p_0}$ to supercritical $H_{n, \log n /n }$: they controlled the difference of the diameters $|\diam(\mst_n\cap H_{n, p_t})-\diam(\mst_n\cap H_{n, p_0})|$ by $\sum_{i}\lp (H_{n, p_{i+1}}\setminus H_{n, p_i})$. They proved that this latter sum is $O(n^{1/3})$ with probability $\rightarrow 1$.
\item[c)] From supercritical $H_{n, \log n /n }$ to the whole $K_n$: $\E[\diam(\mst_n)-\diam(\mst_n\cap H_{n, \log n /n })]=o(n^{1/3})$, since the giant $H_{n, \log n /n }$ is reached from any vertex in $o(n^{1/3})$ steps by Prim's invasion algorithm.
\end{itemize}

Our main result is a modification of this reasoning which works for $\wst_n^{\beta_n}$ if $\beta_n\gg n^{4/3}\log n$. We have to be careful since the connection between Erdős-Rényi graphs and $\wst_{n}^{\beta_n}$ is more difficult than the one for $\mst_n$: for example, $\wst_{n}^{\beta_n}\cap H_{n, p}$ is not necessary connected. What is worse, even $\wst_{n}^{\beta_n}[V(H_{n, p})]$ can be disconnected.  However, Lemma \ref{lemma:H_comps} below
is still strong enough to make the analogue of a) and b) for $\wst_{n}^{\beta_n}$ work. Part c) with the building steps of the Aldous-Broder algorithm behaves quite similarly to the case of Prim's invasion algorithm.

It will be clear that the property we use is that $E(K_n)\setminus E(G_{n, p_{i+1/2}})$ and $E(G_{n, p_i})$ are distinguishable for any $i$, which is equivalent to $p_{i+1/2}-p_{i}=\frac{1}{2}\left(\frac{5}{4}\right)^{i}\frac{1}{n^{4/3}}\gg \frac{\kappa \log n}{\beta_n} \ \forall i$, so we need that $\beta_n\gg n^{4/3}\log n$.

\subsection{Comparing the Aldous-Broder and Prim's invasion algorithms}\label{subsec:ab_vs_invasion}

In Subsection \ref{subsec:ideas}, we have shown Theorem \ref{thm:ab_vs_inv} a) about the two algorithms always having the same building steps for $\beta_n\gg n^3\log n$. In order to prove that they differ for $\beta_n\ll n^3$, we find some steps of Prim's invasion algorithm for which the joint distribution of $\{U_e\}_{e\in \partial W_i}$ is understood.

\begin{prop}\label{prop:number_of_small_gaps}
Condsider $\beta_n\ll n^3$, $c(e):=\exp(-\beta_n U_e)$ and $s\in V(K_n)$. 

Then $\exists C>0$ such that for Prim's invasion algorithm started at $s$
\begin{gather}\label{formula:cointing_smallgaps}
    \P\left( \Lvert \left\{1\le i\le n-1:\;\frac{1}{2}<\frac{c(h_i)}{c(g_i)}<1\text{ and }g_i\in \partial V(H_{n, 2/n})\right\} \Lvert \ge C\min\left(\frac{n^3}{\beta_n}, n\right) \right)\rightarrow 1.
\end{gather}
\end{prop}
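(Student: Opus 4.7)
The strategy is to identify $\Omega(n)$ ``merge moments'' of Prim's invasion at which the conditional distribution of the boundary labels is i.i.d.\ uniform on an interval, and to verify that at each such moment the probability of a small gap between the two smallest boundary labels (conditional on $g_i$ crossing $\partial V(H_{n,2/n})$) is $\Omega(\min(1,n^2/\beta_n))$.

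Set $p:=2/n$ and $\mathcal{C}:=V(H_{n,2/n})$. Standard Erd\H{o}s--R\'enyi theory yields, with probability $\to 1$, that $|\mathcal{C}|=(\alpha+o(1))n$ for $\alpha\in(0,1)$ the positive root of $1-\alpha=e^{-2\alpha}$, and that $V(K_n)\setminus\mathcal{C}$ contains $\Omega(n)$ vertices isolated in $G_{n,p}$. I would work on this event and assume $s\in\mathcal{C}$; the other case is analogous. Because edges of $G_{n,p}$ all have strictly larger conductance than edges with label $>p$, Prim absorbs all of $\mathcal{C}$ during its first $|\mathcal{C}|-1$ steps using edges of $G_{n,p}$. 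Afterwards, I would track the ER process past $p$: let $\mathcal{C}=A_0\subsetneq A_1\subsetneq\cdots$ be the successive $s$-components at the merge times $T_0:=p<T_1<\cdots$, and let $\rho_k$ be the invasion step at which Prim first exits $A_{k-1}$, so $W_{\rho_k-1}=A_{k-1}$ and $U_{g_{\rho_k}}=T_k$.

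The crucial observation is that, conditional on $(A_{k-1},T_{k-1})$ together with the whole invasion history up to step $\rho_k-1$, the labels $\{U_e:e\in\partial A_{k-1}\}$ are i.i.d.\ $\unif[T_{k-1},1]$: every invasion edge chosen before step $\rho_k$ lies strictly inside $A_{k-1}$ and has label $\le T_{k-1}$ (since $A_{k-1}$ is connected in $G_{n,T_{k-1}}$, its MST as a subgraph of $K_n$ uses only such labels), so the history reveals nothing about $\partial A_{k-1}$-labels beyond their exceeding $T_{k-1}$. Writing $M_k:=|\partial A_{k-1}|$, Lemma~\ref{lemma:size_of_gaps} (applied after rescaling $\partial A_{k-1}$'s labels to $[0,1]$) gives
$$\P\bigl(\text{gap between the two smallest of }\partial A_{k-1}\text{'s labels is }<\log 2/\beta_n \bigm| A_{k-1}\bigr)\gtrsim\min(1,M_k/\beta_n),$$
and since the argmin of an i.i.d.\ sample is independent of the spacings between the order statistics, the event $\{g_{\rho_k}\in\partial\mathcal{C}\}$ is independent of the gap event and has conditional probability $|\partial\mathcal{C}\cap\partial A_{k-1}|/M_k$.

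Restricting to merges with $|A_{k-1}|\in[|\mathcal{C}|,|\mathcal{C}|+\delta_0 n]$ for a small fixed $\delta_0>0$ gives $M_k=\Theta(n^2)$ and $|\partial\mathcal{C}\cap\partial A_{k-1}|/M_k=|\mathcal{C}|/|A_{k-1}|\ge\alpha/2$. In this range, $\Omega(n)$ merges occur w.h.p.: indeed, a linear number of vertices outside $\mathcal{C}$ are isolated in $G_{n,p}$ and have their min-label edge directed into $\mathcal{C}$, so they are absorbed as singleton merges at merge times $T_k$ arbitrarily close to $p$. Summing the conditional bound over these merges gives an expected count of $\Omega(n)\cdot\Omega(\min(1,n^2/\beta_n))=\Omega(\min(n,n^3/\beta_n))$ good merges; a Chebyshev-type second-moment estimate, legitimate because the small-gap events at different merges depend on disjoint batches of freshly conditioned labels on the successive $\partial A_{k-1}$'s, upgrades the expectation to a high-probability lower bound.

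\textbf{Main obstacle.} The delicate technical point is establishing the i.i.d.\ $\unif[T_{k-1},1]$ conditional description of the $\partial A_{k-1}$-labels: one must track carefully that in the intermediate Prim steps between $\rho_{k-1}$ and $\rho_k$, the algorithm reveals information only about edges with label $\le T_{k-1}$ and leaks nothing beyond $U_e>T_{k-1}$ for $e\in\partial A_{k-1}$. The second-moment step likewise requires a clean accounting of ``fresh'' versus ``old'' labels at the successive merges. Once these are in hand, Lemma~\ref{lemma:size_of_gaps} together with Chebyshev closes the argument.
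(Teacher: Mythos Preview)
Your proposal is correct and follows essentially the same route as the paper (Lemmas~\ref{lemma:number_of_small_gaps_coupling}--\ref{lemma:number_of_small_gaps_2ndmom}): both establish the conditional i.i.d.\ $\unif[T_{k-1},1]$ structure on $\partial A_{k-1}$ at each merge moment and use isolated vertices of $G_{n,2/n}$ whose minimal edge lands in $H_{n,2/n}$ to produce $\Theta(n)$ relevant merges with boundary of order $n^2$. The paper organizes it slightly differently---it restricts from the outset to the subsequence $t(i)$ of merges with $g_{t(i)}\in\partial\mathcal C$ (each good isolated vertex contributing one such $t(i)$, so there are $\ge 2C'n$ of them) rather than invoking your argmin--gap independence, and it makes the concentration step explicit via stochastic domination by i.i.d.\ Bernoullis; this is precisely what your ``freshly conditioned'' intuition delivers, though note the successive $\partial A_{k-1}$'s overlap heavily, so ``disjoint batches'' is a misnomer---it is the conditional lower bound given the full history that carries the argument.
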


We prove Theorem \ref{thm:ab_vs_inv} b) at the end of this subsection using Proposition \ref{prop:number_of_small_gaps}.

The proof of Proposition \ref{prop:number_of_small_gaps}  is based on two observations and can be found after Lemma \ref{lemma:number_of_small_gaps_2ndmom}. To simplify this description, we suppose now that $s\in H_{n, 2/n}$. The giant component and the isolated vertices of $G_{n, p}$ are denoted by $H_{n, p}$ and $I_{n, p}$.
\begin{itemize}
    \item[$\circ$] Firstly, in Lemma \ref{lemma:number_of_small_gaps_coupling}, we will see that if $g_i\in \partial V(H_{n, 2/n})$, then, even conditioned on what
has happened until that point, we can handle the distribution of the labels on $\partial W_{i-1}$. If additionally $|\partial W_{i-1}|>Dn^2$ holds for a $D>0$, then we can estimate the conditional probability of $\left\{\frac{c(h_i)}{c(g_i)}>\frac{1}{2}\right\}$ easily, and for such $i$'s, the $\left\{\frac{c(h_i)}{c(g_i)}\right\}$'s are bounded by i.i.d.~random variables.
    \item[$\circ$] Secondly, if the smallest labeled edge $e$ coming from a $v\in I_{n, 2/n}$ goes directly to $H_{n, 2/n}$, then there exists an $i$ such that $g_i=e\in \partial V(H_{n, 2/n})$. Using this, we will show in Lemma \ref{lemma:number_of_small_gaps_2ndmom} that, with probability $\rightarrow 1$, there are $\Theta(n)$ $i$'s such that $g_i\in \partial V(H_{n, 2/n})$ and $|\partial W_{i-1}|>Dn^2$.
\end{itemize} 

\begin{notation}
\label{notation:subsec_ab_inv}
In this subsection, for any permutation $\pi$ of $E(K_n)$, we can consider the event $E(\pi):=\{\text{the ordering of $E(K_n)$ is }\pi\}$, where ordering means the increasing ordering w.r.t.~$\{U_e\}_{e}$.

We choose $t(0)$ to satisfy $t(0):=\min\{j:\;V(H_{n, 2/n})\subseteq W_{j}\}$ and $t(i):= \min\{j>t(i-1):\;g_{j}\in \partial V(H_{n, 2/n})\}$ for any $i\ge 1$, recursively. Observe that if Prim's invasion algorithm is started from an $s\in V(H_{n, 2/n})$, then $W_{t(0)}=V(H_{n, 2/n})$ because of the greediness of the algorithm. 

Note that, conditioning on $E(\pi)$, $W_i$ and $t(i)$ are deterministic for any $i$.
\end{notation}

\begin{lemma}\label{lemma:number_of_small_gaps_coupling}
We fix some $n, j\in \N$ and $D\in (0, 1)$. We start Prim's invasion algorithm on our random electric network from an $s\in V(H_{n, 2/n})$. We use Notation~\ref{notation:subsec_ab_inv}.  

We also define $N:=N(n, \beta_n, j):=\left|\left\{i\le j: \frac{c(h_{t(i)})}{c(g_{t(i)})}>\frac{1}{2}\right\}\right|$ where $c(e)=\exp(-\beta_nU_e)$, and $N^{\xi}:=N^{\xi}(n, \beta_n, j, D):=\left|\left\{i\le j: \xi_i<\frac{\log 2}{\beta_n}\right\}\right|$ for the i.i.d.~$\xi_i\sim U_2^{*, [Dn^2]}-U_1^{*, [Dn^2]}$.

Consider a permutation $\pi$ such that $|\partial W_{t(i)-1}|\ge Dn^2$ is satisfied for any $i\le j$ on $E(\pi)$.

Then $N$ stochastically dominates $N^{\xi}$ on $E(\pi)$.\end{lemma}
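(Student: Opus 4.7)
My plan is to construct an explicit coupling, under the conditioning on $E(\pi)$, showing that each gap $U_{h_{t(i)}}-U_{g_{t(i)}}$ is bounded above by a random variable distributed exactly as $\xi_i$, with the $\xi_i$'s independent across $i=1,\ldots,j$. This immediately yields $N\ge N^\xi$ stochastically. Step one is to make explicit that on $E(\pi)$, Prim's invasion algorithm is purely deterministic: the sets $W_k$, the boundaries $\partial W_k$, the indices $t(i)$, and the edges $g_{t(i)},h_{t(i)}$ are all determined by $\pi$, and the only remaining source of randomness lies in the values of the order statistics $U_{(1)}<\cdots<U_{(M)}$ of $M=\binom{n}{2}$ i.i.d.\ uniforms, assigned to the edges via $\pi$.

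Step two is to isolate, at each step $t(i)$, a ``fresh'' sub-boundary $E_i\subseteq \partial W_{t(i)-1}$ of size at least $[Dn^2]$. Because $g_{t(i)}\in\partial V(H_{n,2/n})$ by the definition of $t(i)$, this edge goes from $V(H_{n,2/n})\subseteq W_{t(0)}$ to $V(K_n)\setminus W_{t(i)-1}$, so it has been in the boundary ever since step $t(0)$ and was never selected by Prim before step $t(i)$; hence $g_{t(i)}$ qualifies as fresh. Using the hypothesis $|\partial W_{t(i)-1}|\ge Dn^2$ together with a combinatorial accounting of the non-fresh boundary edges, I would obtain $|E_i|\ge[Dn^2]$. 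By definition $g_{t(i)}$ is the minimum of $\partial W_{t(i)-1}$, hence also of $E_i$, and the second-minimum $h'_i$ of $E_i$ satisfies $U_{h'_i}\ge U_{h_{t(i)}}$.

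Step three is to argue that, conditionally on the algorithmic filtration at time $t(i)-1$ together with the restriction of $\pi$ to the edges already exposed, the labels on $E_i$ are i.i.d.\ uniform on a common interval $[L_i,1]$, where $L_i\in[0,1)$ is the largest label revealed so far. This rests on the exchangeability of the portion of $\pi$ not yet constrained by Prim's decisions. Once this conditional description is in place, an elementary rescaling gives $U_{h'_i}-U_{g_{t(i)}} \overset{d}{=}(1-L_i)\,\xi_i\le \xi_i$, and since $U_{h_{t(i)}}\le U_{h'_i}$ we deduce $U_{h_{t(i)}}-U_{g_{t(i)}}\le \xi_i$ in the coupling. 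The freshness property also ensures that the $\xi_i$'s for different $i$'s are built from disjoint collections of uniforms, so they can be taken independent.

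The main obstacle I anticipate is rigorously justifying the conditional i.i.d.\ uniform claim of Step three under the joint conditioning on $E(\pi)$ and the algorithmic history. The intuition is that Prim's algorithm, up to step $t(i)-1$, reveals only the values $U_{g_1},\ldots,U_{g_{t(i)-1}}$ together with the constraints that each of these was a boundary minimum at its step; the remaining ranks in $\pi$ can then be permuted freely among the still-fresh edges, making the corresponding labels exchangeable and the conditional distribution on $E_i$ equal to that of i.i.d.\ uniforms on $[L_i,1]$. Executing this sequential exposure cleanly, and chaining it across $i=1,\ldots,j$ to obtain the independence of the constructed $\xi_i$'s, is the technical heart of the lemma.
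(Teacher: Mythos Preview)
Your high-level plan (couple each gap $U_{h_{t(i)}}-U_{g_{t(i)}}$ to an independent $\xi_i$) is the right shape, but you are missing the one structural observation that makes everything work, and without it your Steps~2--4 do not go through.

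The point of singling out the times $t(i)$ is precisely that at these moments the explored set is an entire Erd\H{o}s--R\'enyi component: since $g_{t(i)}\in\partial V(H_{n,2/n})$ has its inside endpoint in $V(H_{n,2/n})\subseteq W_{t(0)}$, it has sat in every boundary $\partial W_k$ for $t(0)\le k<t(i)$, forcing $U_{g_\ell}<U_{g_{t(i)}}$ for all $\ell<t(i)$; greediness of Prim then gives $W_{t(i)-1}=V(\mathcal C_s^{G_{n,q_i}})$ for $q_i=\max_{\ell<t(i)}U_{g_\ell}$. Consequently \emph{every} edge of $\partial W_{t(i)-1}$ satisfies the single constraint $U_e>q_i$, so conditionally the full boundary carries i.i.d.\ $\mathrm{Unif}[q_i,1]$ labels and the hypothesis $|\partial W_{t(i)-1}|\ge Dn^2$ applies directly. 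There is no ``fresh sub-boundary'' to isolate and no combinatorial accounting to perform; your Step~2 is trying to solve a problem that does not exist, and as stated it cannot be completed from $|\partial W_{t(i)-1}|\ge Dn^2$ alone (the natural candidate $E_i=\partial W_{t(i)-1}\cap\partial V(H_{n,2/n})$ has size $|V(H_{n,2/n})|\cdot(n-|W_{t(i)-1}|)$, which can be strictly smaller than $|\partial W_{t(i)-1}|$).

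Your Step~4 independence argument (``disjoint collections of uniforms'') would also fail: an edge of $\partial V(H_{n,2/n})$ with a large label lies in $\partial W_{t(i)-1}$ for many successive $i$'s, so the sub-boundaries overlap. The paper avoids this entirely: instead of building independent $\xi_i$'s from disjoint edges, it shows that for every $\mathbf a(i-1)\in\{0,1\}^{i-1}$ the conditional success probability satisfies
\[
\P\Bigl(U_{h_{t(i)}}-U_{g_{t(i)}}<\tfrac{\log 2}{\beta_n}\ \Big|\ E(\pi,\mathbf a(i-1))\Bigr)\ \ge\ \P\Bigl(\xi_i<\tfrac{\log 2}{\beta_n}\Bigr),
\]
uniformly in the past outcomes. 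This sequential lower bound on the Bernoulli parameters is exactly what yields $N\ge_{\mathrm{st}}N^\xi$ (the latter being $\mathrm{Bin}(j,\P(\xi_1<\log 2/\beta_n))$), with no need for any disjointness.
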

\begin{proof}
Since $V(H_{n, 2/n})\subseteq W_{t(i-1)}$ and one of the endpoints of $g_{t(i)}$ is in $V(H_{n, 2/n})$, we have that $U_{g_{t(i-1)}}=\max_{\ell<t(i)}U_{g_{\ell}}$ and $W_{t(i)-1}=V\left(H_{n, U_{g_{t(i-1)}}}\right)$, meaning that right after the  $t(i)-1^{st}$ step of Prim's invasion algorithm, the random walk has explored exactly the giant component of an Erdős-Rényi graph for an appropriate $p$. 

Consequently, just right after the $t(i)-1^{st}$ step, the joint distribution of the labels of $\partial W_{t(i)-1}$ becomes simple: conditioned on $\left(W_{t(i)-1}, U_{g_{t(i-1)}}\right)$, the labels $\{U_e\}_{e\in \partial W_{t(i)-1}}$ are i.i.d.~$\unif[U_{g_{t(i-1)}}, 1]$ random variables, since $e\in \partial W_{t(i)-1}$ is not added before the $t(i)^{th}$ step if and only if $U_e> U_{g_{t(i-1)}}$.

For a permutation $\pi$ and $\mathbf{a}(i):=(a_1\ldots, a_i)\in \{0, 1\}^{i}$, we introduce the event $E(\pi, \mathbf{a}(i))$ as $E(\pi, \mathbf{a}(i)):=E(\pi)\cap \left\{\mathds{1}\left[\frac{c(h_{t(\ell)})}{c(g_{t(\ell)})}>\frac{1}{2}\right]=a_\ell \;\forall \ell\le i \right\}$. For any $\mathbf{a}(i-1)\in \{0, 1\}^{i-1}$ and $\pi$ satisfying the condition of the lemma,
\begin{align*}
    &\P\left(\frac{c(h_{t(i)})}{c(g_{t(i)})}>\frac{1}{2} \Given E(\pi, \mathbf{a}(i-1))\right)\stackrel{\text{by def.}}{=}\P\left(U_{h_{t(i)}}-U_{g_{t(i)}}<\frac{\log 2}{\beta_n}\Given E(\pi, \mathbf{a}(i-1))\right)\\
    &\ge \P\left(U_2^{*, |\partial W_{t(i)-1}|}-U_1^{*, |\partial W_{t(i)-1}|}<\frac{\log 2}{\beta_n}\Given E(\pi, \mathbf{a}(i-1))\right)\ge\P\left(\xi_i<\frac{\log 2}{\beta_n}\right),
\end{align*}
where the first inequality comes from that, under conditioning on $E(\pi)$, $|\partial W_{t(i)-1}|$ is deterministic, and $\left(1-U_{g_{t(i-1)}}\right)^{-1}\left(U_{h_{t(i)}}-U_{g_{t(i)}}\right)$ are distributed as independent $U_2^{*, |\partial W_{t(i)-1}|}-U_1^{*, |\partial W_{t(i)-1}|}$ random variables, since conditioning on the ordering does not modify the ordered sample. The second inequality holds since $|\partial W_{t(i)-1}|>Dn^2$ on $E(\pi)$.

We obtained that $\P\left(E(\pi, (a_1, \ldots, a_{i-1}, 1)) \Given E(\pi, (a_1, \ldots, a_{i-1}))\right)\ge \P\left(\xi_i<\frac{\log 2}{\beta_n}\right)$, implying the stochastic dominance on any $E(\pi)$ satisfying the condition of the lemma.
\end{proof}

\begin{lemma}\label{lemma:number_of_small_gaps_2ndmom} We start Prim's invasion algorithm from an $s\in V(H_{n, 2/n})$. We use Notation \ref{notation:subsec_ab_inv}.

Then $\exists C', D>0$ such that 
$\P\left(\left|\partial W_{t(i)-1}\right|\ge Dn^2 \; \forall i\le C'n\right)\rightarrow 1$.
\end{lemma}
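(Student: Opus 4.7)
The plan is to produce $\Omega(n)$ distinct steps $t(i)$ lying in a window $[\rho n,(1-D'')n]$ for constants $\rho,D''>0$, so that $|\partial W_{t(i)-1}|=t(i)(n-t(i))\geq \rho D'' n^2$ in $K_n$. I would produce these via a set of \emph{good} external vertices: call $v\in V(K_n)$ good if $v$ is isolated in $G_{n,2/n}$ and the edge $e_v^*:=\arg\min_{u\neq v}U_{(v,u)}$ has its other endpoint in $V(H_{n,2/n})$. Each good $v$ is brought into $W$ by Prim via $e_v^*\in\partial V(H_{n,2/n})$, hence at a step $\tau_v=t(i_v)$ in the sequence $\{t(i)\}$: since $v$ is isolated in $G_{n,2/n}$ we have $v\notin V(H_{n,2/n})\subseteq W_{t(0)}$, while the other endpoint of $e_v^*$ already sits in $V(H_{n,2/n})\subseteq W_{\tau_v-1}$; since $e_v^*$ has strictly smaller label than any other edge incident to $v$, Prim must attach $v$ via $g_{\tau_v}=e_v^*$.

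I would next estimate $M:=|\{v:v\text{ good}\}|$ by a second-moment argument. Conditional on $\{v\text{ isolated}\}$, the labels $\{U_{(v,u)}\}_{u\neq v}$ are i.i.d.~$\unif(2/n,1)$, so their argmin is uniform on $V(K_n)\setminus\{v\}$ and independent of the structure of $G_{n,2/n}$ restricted to $V(K_n)\setminus\{v\}$. Combined with the classical facts $\P(v\text{ isolated})=(1-2/n)^{n-1}\to e^{-2}$ and $|V(H_{n,2/n})|/n\to\rho$ in probability, where $\rho>0$ is the survival probability of a Poisson$(2)$ Galton--Watson process, one gets $\P(v\text{ good})=(e^{-2}\rho)(1+o(1))$, hence $\E[M]=\Omega(n)$. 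For $v\neq w$, an inclusion--exclusion shows that $\{v,w\text{ isolated}\}$ involves $2n-3$ edge labels instead of $2(n-1)$, giving a correction factor $1/(1-2/n)=1+O(1/n)$; conditioning further on the argmins from $v,w$ (which share only the label $U_{(v,w)}$), one obtains $\mathrm{Cov}(\mathds{1}_{v\text{ good}},\mathds{1}_{w\text{ good}})=O(1/n)$, so $\mathrm{Var}(M)=O(n)$ and Chebyshev yields $M\geq c_1 n$ with probability $\to 1$ for some $c_1>0$.

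Finally I would apply pigeonhole. Set $D'':=c_1/2$. Since only $D''n$ vertices are added to $W$ after step $(1-D'')n$, no more than $D''n$ good vertices $v$ can have $\tau_v>(1-D'')n$, leaving at least $c_1 n/2=:C'n$ good vertices with $\tau_v\leq (1-D'')n$. Therefore the first $C'n$ values $t(1)<t(2)<\cdots<t(C'n)$ are all $\leq(1-D'')n$, and since $W_{t(i)-1}\supseteq V(H_{n,2/n})$, we have $t(i)=|W_{t(i)-1}|\geq \rho n$ on the high-probability event $\{|V(H_{n,2/n})|\geq \rho n\}$. Hence $|\partial W_{t(i)-1}|=t(i)(n-t(i))\geq \rho D'' n^2=:D n^2$.

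The main obstacle is the covariance estimate: the events $\{v\text{ good}\}$ are coupled globally through $V(H_{n,2/n})$, which is a non-local function of all edge labels. Controlling this coupling requires arguing that conditioning on $v,w$ being isolated barely perturbs the giant and the argmins incident to $v$ and $w$. A cleaner alternative, if the direct bound is awkward, is a bounded-differences (McDiarmid) approach, using that in the supercritical regime the second-largest component of $G_{n,2/n}$ is of size $O(\log n)$ w.h.p., so a single label re-sample changes $M$ by at most $O(\log n)$ on a high-probability event, and one may truncate outside that event.
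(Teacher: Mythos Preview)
Your approach is essentially the paper's: define ``good'' vertices as those isolated in $G_{n,2/n}$ whose minimal-label edge lands in $V(H_{n,2/n})$, use a second-moment argument to get $\Omega(n)$ of them with high probability, and deduce that the first $C'n$ times $t(i)$ lie in a window giving $|\partial W_{t(i)-1}|\ge Dn^2$.

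The one place where the paper does something cleaner is precisely the step you flag as the main obstacle. Rather than wrestling with the global coupling through $V(H_{n,2/n})$ or invoking McDiarmid, the paper simply \emph{conditions on} $H_{n,2/n}=\omega$ (which fixes both the giant and the set of isolated vertices). Under this conditioning, for $v\in I_{n,2/n}$ the labels $\{U_{(v,u)}\}_{u\ne v}$ are i.i.d.\ $\unif(2/n,1)$, so the argmin is uniform on $V(K_n)\setminus\{v\}$; one then computes $\E_\omega[X_v]=|V(H_{n,2/n})|/(n-1)$ and, for $u\ne v\in I_{n,2/n}$, the exact covariance $\cov_\omega(X_u,X_v)=\E_\omega[X_u]^2/(2n-3)$ by a two-step argmin argument on the $2n-3$ edges incident to $\{u,v\}$. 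This yields $\var_\omega(X_n)=\Theta(n)$ uniformly on the high-probability event $\{|V(H_{n,2/n})|>d_1n,\ |I_{n,2/n}|>d_2n\}$, and Chebyshev finishes. Your covariance sketch is morally the same computation, but doing it after conditioning removes all the worry about how $H_{n,2/n}$ depends on the labels.

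Two small cosmetic points: (i) the event $\{|V(H_{n,2/n})|\ge \rho n\}$ is not high-probability (you want $d_1<\rho$); (ii) your final pigeonhole step is a slight variant of the paper's---the paper instead lower-bounds $|\partial W_{t(i)-1}|$ by $|W_{t(i)-1}|\cdot|I_{n,U_{g_{t(i-1)}}}|$ and notes that at least $2C'n-i$ of the good isolated vertices remain outside $W_{t(i)-1}$---but both arguments are fine.
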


\begin{proof}
We denote by $A_n$ the event that `$|V(H_{n, 2/n})|>d_1n$ and $|I_{n, 2/n}|>d_2n$’, where $d_1, d_2\in (0, 1)$ are chosen such that $\P(A_n)\rightarrow 1$ as $n\rightarrow \infty$. This can be done by Theorem 2.3.2 and Subsection 2.8  of \cite{durrett2010random}. 

If the smallest labeled edge $e$ coming from a $v\in I_{n, 2/n}$ goes directly to $H_{n, 2/n}$, then by the greediness of Prim's invasion algorithm, $v$ is joined to the invasion tree through $e$, i.e., $\exists i: \; g_{t(i)}=e\in \partial V(H_{n, 2/n})$. Hence it is reasonable to study
$$X_n:=\sum_{v\in I_{n, 2/n}}X_v\text{, where }X_v:=\mathds{1}[\text{the smallest labeled edge $e$ of $v$ runs to $H_{n, 2/n}$}].$$

We define $\P_{\omega}(\cdot):=\P(\cdot\given H_{n, 2/n}=\omega)$. Under this conditioning we estimate $\E_{\omega}(\cdot)$, $\var_{\omega}(\cdot)$ and $\cov_{\omega}(\cdot, \cdot)$ of some random variables. In this proof, writing $\Theta(\cdot)$, we mean it uniformly on $A_n$. For $\omega\in A_n$, we have $\E_{\omega}[X_n]=\Theta(n)$, since $|V(H_{n, 2/n})|\in [d_1n, n]=\Theta(n)$ and $|I_{n, 2/n}|\in [d_2n, n]=\Theta(n)$,
\begin{gather*}
\E_{\omega}X_v=\frac{|V(H_{n, 2/n})|}{n-1}=\Theta(1)\text{ and }\E_{\omega}[X_n]=\Theta(n).
\end{gather*}
Moreover, $\text{Var}_{\omega}(X_n)=\Theta(n)$ holds. Indeed, $\text{Var}_{\omega}(X_v)=\Theta(1) \; \forall v\in I_{n, 2/n}$, and for $u\ne v\in I_{n, 2/n}$, 
\begin{gather*}
    \cov_{\omega}(X_{u}, X_{v})\stackrel{(*)}{=}\frac{2|V(H_{n, 2/n})|}{2n-3}\frac{|V(H_{n, 2/n})|}{n-1}-\left(\frac{|V(H_{n, 2/n})|}{n-1}\right)^2=\frac{|V(H_{n, 2/n})|^2}{(n-1)^2(2n-3)}=\frac{E_{\omega}[X_{u}]^2}{2n-3},\\
    \text{Var}_{\omega}(X_n)=\sum_{v\in I_{n, 2/n}}\text{Var}_{\omega}(X_v)+\sum_{\substack{u\ne v\\\in I_{n. 2/n}}}\text{Cov}_{\omega}(X_u, X_v)=\Theta \left(n\right)\Theta(1)+\Theta \left(n^2\right)\Theta\left(\frac{1}{2n-3}\right)=\Theta(n),
\end{gather*}
where $(*)$ is computed the following way: denoting the smallest labeled edge of $E(\{u, v\}, V(K_n))$ by $(a_1, b_1)$ with $a_1\in \{u, v\}$, we have that $\P\left(b_1\in H_{n, 2/n}\right)=\frac{|E(\{u, v\}, V(H_{n, 2/n}))|}{|E(\{u, v\}, V(K_n))|}=\frac{2|V(H_{n, 2/n})|}{2n-3}$. Given $\{b_1\in H_{n, 2/n}\}$, the probability of the smallest labeled edge of $E(\{u, v\}\setminus\{a_1\}, V(K_n))$ having endpoint in $H_{n, 2/n}$ is $\frac{|V(H_{n, 2/n})|}{n-1}$, so $\E[X_{u}X_{v}]=\frac{2|V(H_{n, 2/n})|}{2n-3}\frac{|V(H_{n, 2/n})|}{n-1}$.

By $\E_{\omega}[X_n]=\Theta(n)$, $\exists C'>0$ such that $\frac{1}{2}\E_{\omega}[X_n]\ge 2C'n \ \forall \omega\in A_n$. Hence by Chebyshev's inequality
\begin{align}\label{formula:steps_cheby}
\P_{\omega}\left(X_n\le 2C'n\right)\le \P_{\omega}\left(|X_n-\E_{\omega}[X_n]|\ge \frac{1}{2}\E_{\omega}[X_n]\right)\le  \frac{\var_\omega(X_n)}{\frac{1}{4}\E_{\omega}[X_n]^2}\rightarrow 0
\end{align}
uniformly on $A_n$. Since $\P(A_n)\rightarrow 1$, we also have that $\P\left(X_n>2C'n\right)\rightarrow 1.$

Hence, for any $i\le [C'n]$, we have that $|I_{n, U_{g_{t(i-1)}}}|\ge 2C'n-i\ge C'n$ and $|\partial W_{t(i)-1}|\ge |W_{t(i)-1}||I_{n, U_{g_{t(i-1)}}}|\ge d_1nC'n$ hold, with probability $\rightarrow 1$. So we get the statement of the lemma for $D:=d_1C'$.

Then, with the notation of the heuristics appearing in Subsection \ref{subsec:ideas}, choosing $a_1:=d_1$ and $a_2:=1-C'$, we indeed obtained $C'n$ steps $j\in [a_1n, a_2n]$ where the distribution of the labels of $\partial W_j$ can be estimated easily and independently for different $j$'s.
\end{proof}

\begin{proof}[Proof of Proposition \ref{prop:number_of_small_gaps}] We use Notation \ref{notation:subsec_ab_inv}. Firstly, we suppose that $s\in V(H_{n, 2/n})$. Because of the previous lemma, there exist $C', D>0$ such that for $E_n:=\{\left|\partial W_{t(i)-1}\right|\ge Dn^2 \; \forall i\le C'n\}$, $\P(E_n)\rightarrow 1$ holds.

We study the i.i.d.~random variables $\xi_j\sim U_2^{*, [Dn^2]}-U_1^{*, [Dn^2]}$ for $j\le C'n$. If $\beta_n\gg n^2$, then $\frac{n^2}{\beta_n}=o(1)$, so Lemma \ref{lemma:size_of_gaps} can be applied for large enough $n$'s, giving us
\begin{equation}
\begin{aligned}\label{formula:prob_one_big_gap}
    \P&\left(\xi_i<\frac{\log 2}{\beta_n}\right)\stackrel{\text{Lemma \ref{lemma:size_of_gaps}}}{=}&
   \Theta\left([Dn^2]\frac{\log 2}{\beta_n}\right)=\Theta\left(\frac{n^2}{\beta_n}\right).
  \end{aligned}
\end{equation}

Since $\E\left[\sum_{j\le C'n}\mathds{1}\left[\xi_j<\frac{\log 2}{\beta_n}\right]\right]=\Theta(n)\Theta \left(\frac{n^2}{\beta_n}\right)$, there exists $C>0$ such that the inequality
$\frac{1}{2}\E\left[\sum_{j\le C'n}\mathds{1}\left[\xi_j<\frac{\log 2}{\beta_n}\right]\right]>C\frac{n^3}{\beta_n}$ holds. Using the notation $N(n, \beta_n, j)$ from Lemma \ref{lemma:number_of_small_gaps_coupling}, by the law of large numbers
\begin{align*}
    &\P\left( \Lvert \left\{i: \frac{c(h_i)}{c(g_i)}>\frac{1}{2}\text{ and }g_i\in \partial V(H_{n, 2/n})\right\} \Lvert < \frac{Cn^3}{\beta_n} \right)\\
    &\hspace{5cm}\le \P[E_n^c]+\sum_{\pi: E(\pi)\subseteq E_n}\P\left(N(n, \beta_n, [C'n])<\frac{Cn^3}{\beta_n}\Given E(\pi)\right)\P(E(\pi))\\
    &\hspace{5cm}\stackrel{\text{Lemma \ref{lemma:number_of_small_gaps_coupling}}}{\le} \P[E_n^c]+\P[E_n]\P\left(\sum_{i\le C'n}\mathds{1}\left[\xi_i<\frac{\log 2}{\beta_n}\right]< \frac{Cn^3}{\beta_n}\right)\stackrel{\text{LLN}}{\rightarrow} 0.
\end{align*}

For $\beta_n=O(n^2)$, we cannot use Lemma \ref{lemma:size_of_gaps}, but (\ref{formula:prob_one_big_gap}) can be replaced with $\P
\left(\xi_i<\frac{\log 2}{\beta_n}\right)=\Theta(1)$, and then $\E[\sum_{i\le C'n}\xi_i]=\Theta \left(n\right)$, yielding the statement of the proposition for $Cn$ for a $C>0$.

If $s\notin V(H_{n, 2/n})$, then let $C_0$ be the connected component of $G_{n, 2/n}$ containing $s$, and recursively we choose $C_i$ as the union of $C_{i-1}$ and the connected component of $G_{n, 2/n}$ containing the endpoint outside $C_{i-1}$ of the edge with minimal label from $\partial V(C_{i-1})$. Then, even conditioning on the past, this endpoint is uniformly distributed on $V(K_n)\setminus C_{i-1}$, hence
\begin{gather}\label{formula:stepping_into_H}
    \P\left(C_{[\log n]}\cap V(H_{n, 2/n})=\emptyset\Given |V(H_{n, 2/n})|\ge d_1n \right)\le \left(1-\frac{d_1n}{n}\right)^{[\log n]}=\left(1-d_1\right)^{[\log n]}\rightarrow 0.
\end{gather}

Let $J$ be the smallest index which satisfies $C_J\cap H_{n, 2/n}\ne \emptyset$, and we denote by $\tilde{s}$ the first vertex of $H_{n, 2/n}$ built by Prim's invasion algorithm started at $s$. For an edge $e\in \partial V(H_{n, 2/n})$ which does not have an endpoint in $C_{J-1}$, we show that if $e=g_i\in \partial V(H_{n, 2/n})$ for an $i$ and $U_{h_i}-U_{g_i}<\frac{\log 2}{\beta_n}$ in the invasion started at $\tilde{s}$, then these properties are inherited for the invasion started at $s$. 

To be more precise here, we emphasize that the invasion is started at $v\in V(K_n)$ by the notation $g_i(v)$ and $h_i(v)$. If $e$ is added by Prim's invasion algorithm started at $v$, then it is also added by the invasion started at any vertex as the edge set generated by the invasion does not depend on the starting vertex, and we write $i(v, e)$ for the index satisfying $e=g_{i(v,e)}(v)$. If $e\in \partial V(H_{n, 2/n})$ is added by the invasion and it does not have an endpoint in $C_{J-1}$, then it is added later than $\tilde{s}$ by the invasion stated at $s$, so $U_{h_{i(s,e)}(s)}<U_{h_{i(\tilde{s},e)}(\tilde{s})}$ because we are taking minimum in case of $s$ on a larger set than in case of $\tilde{s}$. Hence, the condition $U_{h_{i(v, e)}(v)}-U_{g_{i(v, e)}(v)}<\frac{\log 2}{\beta_n}$ holds easier for $v=s$ then for $v=\tilde{s}$, which we wanted to show.

So, with exception of at most $L$ indices, each index appearing in (\ref{formula:steps_cheby}) for the algorithm started at $\tilde{s}$ gives us an index in (\ref{formula:steps_cheby}) for starting at $s$. If $d_1$ is small enough, then $\P(|V(H_{n, 2/n})|\ge d_1n)\rightarrow 1$ by Theorem 2.3.2 of \cite{durrett2010random}, and $\P(L\le [\log n]\given |V(H_{n, 2/n})|\ge d_1n)\rightarrow 1$ by (\ref{formula:stepping_into_H}), so we obtained $2C'n-[\log n]=2C'n(1-o(1))$ $i$'s in (\ref{formula:steps_cheby}) for $s\notin H_{n, 2/n}$ with probability $\rightarrow 1$. Then, we can finish the proof for these indices likewise we did for $s\in H_{n, 2/n}$.
\end{proof}

\begin{proof}[Proof of Theorem \ref{thm:ab_vs_inv}. b)] For some $\{k_n\}_n$, the event that there are $k_n$ steps $a(1)<\ldots <a(k_n)$ such that $\frac{c(h_{a(j)})}{c(g_{a(j)})}>\frac{1}{2}$ and $g_{a(j)}\in \partial V(H_{n, 2/n})$ is denoted by $E_n$. For $\beta_n\gg n^3$ and $k_n:=C\min \left(\frac{n^3}{\beta_n}, n\right)\rightarrow \infty$,  we have that $\P(E_n)\rightarrow 1$ by Proposition \ref{prop:number_of_small_gaps} for a small enough $C$. 

Let $A_{a(3i+1)}$ be the event that the random walk leaves $W_{a(3i+1)}$ first through $f_{a(3i+2)}$. We check that $\sum_{i=0}^{[k_n/3]-1}\mathds{1}\left[\P_{w_{a(3i+1)}}(A_{a(3i+1)})\ge \frac{2}{3}\right]$ stochastically dominates $\mathrm{Bin}([k_n/3], 1/2)$ on $E_n$. Since $U_{h_{a(3(i-1)+2)}}=U_{h_{a(3i-1)}}\le U_{g_{a(3i)}}<U_{g_{a(3i+1)}}$, turning from $\c$ to $\c^{g_{a(3(i-1)+2)}\leftrightarrow h_{a(3(i-1)+2)}}$ does not change $w_{a(3i+1)}$ or $W_{a(3i+1)}$, and then $\P_{w_{a(3i+1)}}(A_{a(3i+1)})=\P_{w_{a(3i+1)}}^{g_{a(3(i-1)+2)}\leftrightarrow h_{a(3(i-1)+2)}}(A_{a(3i+1)})$. Therefore, independently for each $i$, $\P_{w_{a(3i+1)}}(A_{a(3i+1)})\ge \frac{1}{\frac{1}{2}+1}=\frac{2}{3}$ holds with probability at least $\frac{1}{2}$ by Proposition \ref{prop:ab_is_not_prim}. 

We denote by $E_n'$ the event that there are at least $\frac{1}{7}k_n$ $i$'s satisfying $\P_{w_{a(3i+1)}}(A_{a(3i+1)})\ge \frac{2}{3}$. For $\chi_n\sim \mathrm{Bin}([k_n/3], 1/2)$, since  $\frac{k_n}{7}\le \left(\frac{1}{2}-\eps\right)\left[\frac{k_n}{3}\right]$ for a small $\eps>0$, $\P(E_n'^c)\le \P(\chi_{n}< k_n/7)+\P(E_n^c)\rightarrow 0$ by the law of large numbers. By the chain rule for conditional probabilities,
\begin{align*}
    \P_s(f_j=g_j \;\forall j\le n-1)&\le \P(E_n'^c)+\prod_{j=0}^{n-2} \P_{s}(f_{j+1}=g_{j+1}\given E_n', f_i=g_i \;\forall i\le j)\le \P(E_n'^c)+\left(\frac{2}{3}\right)^{k_n/7},
\end{align*}
tending to 0.
\end{proof}

\subsection{Comparing \texorpdfstring{$\wst^{\beta_n}_n$}{} and \texorpdfstring{$\mst_n$}{}}\label{subsec:wst_vs_mst}

\begin{proof}[Proof of Theorem \ref{thm:wst_vs_mst}] Given $U(\mst):=\{(f, U_f)\}_{f\in E(\mst_n)}$, the conditional probability and conditional expectation are abbreviated as $\P_{U(\mst)}(\cdot)$ and $\E_{U(\mst)}(\cdot)$. It is useful to observe that, for $E_n:=\{U(\mst) \given U_f\le \frac{2\log n}{n} \;\forall f\in E(\mst_n)\}$, we have $\P(E_n)\rightarrow 1$, since the Erdős-Rényi graph $G_{n, 2\log n/n}$ is connected with probability $\rightarrow 1$ by Theorem 2.8.1 of \cite{durrett2010random}.

Under conditioning on $\{(f, U_f)\}_{f\in E(\mst_n)}$, for any $e=(u,v)\notin \mst_n$, consider the random variable $\chi_e\sim \unif[m_e, 1]$ with $m_e:=\max_{f\in \path_{\mst_n}(u,v)}U_f$. For any $\omega\in E_n$, by Lemma \ref{lemma:conditioning}
$$\P_{\omega}\left(e\text{ is $n^{-\kappa}$-significant}\right)=\P_{\omega}\left(\frac{e^{-\chi_e\beta_n}}{e^{-m_e\beta_n}}>n^{-\kappa}\right)=
\P_{\omega}\left(\chi_e-m_e<\frac{\kappa \log n}{\beta_n}\right)\le \frac{1}{1-\frac{2\log n}{n}}\frac{\kappa \log n}{\beta_n}$$
and similarly $\P_{\omega}\left(e\text{ is $n^{-\kappa}$-significant}\right)\ge \frac{\kappa \log n}{\beta_n}$. 

\emph{a)} For $\beta_n\gg n^2\log n$ and $\kappa=13$, by Proposition \ref{prop:wst_vs_mst} b) and the union bound,
\begin{align*}
\P(\wst_n^{\beta_n}\ne \mst_n)&\le\P( \exists \;n^{-13}\text{-significant edges})+n^{-13/2}n^{11/2}\\
&\le\sum_{e\notin \mst_n}\P(e\text{ is $n^{-13}$-significant}\given E_n)+\P(E_n^c)+n^{-1}\\
&=\Theta\left(\frac{n^2\log n}{\beta_n}\right)+\P(E_n^c)+n^{-1}\rightarrow 0.
\end{align*}

\emph{b)} Conditioning on any $U(\mst)$, we have that $\P_{U(\mst)}\left(e\text{ is $\frac{1}{2}$-significant}\right)\ge \frac{\log 2}{\beta_n}$ for any $e\notin \mst_n$, and the $\frac{1}{2}$-significance of the external edges is independent by Lemma \ref{lemma:conditioning}. Therefore, given $U(\mst)$, the
number of $\frac{1}{2}$-significant edges stochastically dominates $X_n:=\sum_{e\notin \mst_n}\xi_e$ for the i.i.d.~random variables with distribution $\xi_e\sim \ber\left(\frac{\log 2}{\beta_n}\right)$. Then $\exists C>0$ such that $\frac{1}{2}\E_{U(\mst)}[X_n]\ge C\frac{n^2}{\beta_n}$, so $\P\left(X_n< C\frac{n^2}{\beta_n}\right)\rightarrow 0$ by the law of large numbers. Then Proposition \ref{prop:wst_vs_mst} a) with $\eps_0:=1/2$ and $|E_{\mathrm{sig}}\left(\eps_0\right)|\ge \left[C\frac{n^2}{\beta_n}\right]\rightarrow \infty$ implies that
\begin{align*}
    \P(\wst_n^{\beta_n}=\mst_n)&\le  \P\left(\text{\# of $\frac{1}{2}$-significant edges}< \frac{Cn^2}{\beta_n}\right)+\frac{1}{1+\frac{1}{2}\left[\frac{Cn^2}{\beta_n}\right]}\\
    &\le \P\left(X_n< \frac{Cn^2}{\beta_n}\right)+o(1)\rightarrow 0.
\end{align*}
\end{proof}

We close our subsection with a proposition about how far the labels of the edges running between different connected components of the Erdős-Rényi graphs can be from the labels appearing in $\mst_n$ connecting these components. It is a simple corollary of our computations on the significant edges. In Section \ref{sec:wst_vs_mst}, for each property of $\wst_n^{\beta_n}$, we also phrase the corresponding property of the Erdős-Rényi graphs. Hence, we include Proposition \ref{prop:wst_mst_E-R}, although this is not used later in the paper.

\begin{prop}\label{prop:wst_mst_E-R} For a sequence $\delta_1, \delta_2, \ldots$ of positive numbers, we consider $$B_n:=\left\{E(G_{n, p+\delta_n})\mathbin{\big\backslash}\bigcup_{\substack{C\text{ is conn.}\\\text{comp. of }G_{n, p}}}E(G_{n, p+\delta_n}[V(C)])\subseteq \mst_n \; \forall p\in [0, 1]\right\}.$$

\begin{itemize}
    \item[]    If $\delta_n\ll \frac{1}{n^2}$, then $\P(B_n)\rightarrow 1$.
    \item[]    If $\delta_n\gg \frac{1}{n^2}$, then $\P(B_n)\rightarrow 0$.
\end{itemize}
\end{prop}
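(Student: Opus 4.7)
The plan is to reduce $B_n^c$ to exactly the same kind of ``edge significance'' condition that drove the proof of Theorem~\ref{thm:wst_vs_mst}, and then read off the phase transition from Lemma~\ref{lemma:conditioning}. For $e=(u,v)\notin\mst_n$, set $m_e:=\max_{f\in\path_{\mst_n}(u,v)}U_f$ as in Lemma~\ref{lemma:conditioning}. By the bottleneck/cycle property of the $\mst$, $m_e$ is precisely the minimum $p\in[0,1]$ for which $u$ and $v$ lie in the same connected component of $G_{n,p}$; for $e\in\mst_n$, the analogous threshold equals $U_e$. Using this, I would first establish the elementary equivalence
\[
B_n^c=\{\exists\, e\notin\mst_n:\ U_e-m_e<\delta_n\}\quad(\text{almost surely}).
\]
Indeed, a witnessing pair $(p,e)$ with $e\notin\mst_n$ forces $U_e-\delta_n\le p<m_e$, hence $U_e-m_e<\delta_n$; conversely any such $e$ admits a valid $p$. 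Edges in $\mst_n$ always lie in some ``bad'' slice but are irrelevant to $B_n$ by definition.

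Next I would condition on $U(\mst):=\{(f,U_f)\}_{f\in\mst_n}$ and apply Lemma~\ref{lemma:conditioning}: the labels $\{U_e\}_{e\notin\mst_n}$ are conditionally independent with $U_e\sim\unif[m_e,1]$, so that
\[
\P\!\left(U_e-m_e<\delta_n\;\big|\;U(\mst)\right)=\frac{\min(\delta_n,\,1-m_e)}{1-m_e}.
\]
As in the proof of Theorem~\ref{thm:wst_vs_mst}, I would restrict to the high-probability event $E_n:=\{U_f\le 2\log n/n\ \forall f\in\mst_n\}$, on which $m_e\le 2\log n/n$ for every $e\notin\mst_n$, so that the above conditional probability lies between $\delta_n$ and $(1+o(1))\delta_n$ whenever $\delta_n\le 1/2$.

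For the upper bound $\delta_n\ll n^{-2}$, a union bound over the $\binom{n}{2}-(n-1)$ external edges gives
\[
\P(B_n^c)\le \P(E_n^c)+\binom{n}{2}\cdot(1+o(1))\delta_n=O(n^2\delta_n)+o(1)\to 0.
\]
For the lower bound $\delta_n\gg n^{-2}$ (WLOG $\delta_n\le 1/2$ for large $n$, the remaining cases being easier), the conditional independence yields, on $E_n$,
\[
\P\!\left(B_n\;\big|\;U(\mst)\right)=\prod_{e\notin\mst_n}\!\left(1-\frac{\delta_n}{1-m_e}\right)\le (1-\delta_n)^{\binom{n}{2}-(n-1)}\le \exp\!\left(-\Theta(n^2\delta_n)\right)\to 0.
\]

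I do not foresee a serious obstacle; the main conceptual step is the reduction in the first paragraph. The only point requiring real care is verifying that $m_e$ is indeed the correct connectivity threshold in $G_{n,p}$ (this is the bottleneck/cycle property of the $\mst$) and keeping strict vs.\ non-strict inequalities consistent---a measure-zero issue under the continuous law of the $U_e$'s. Once the reduction is in hand, the two tails fall out of exactly the machinery already developed for Theorem~\ref{thm:wst_vs_mst}, just with $\kappa\log n/\beta_n$ replaced by $\delta_n$; this is why the critical scale shifts from $\beta_n\asymp n^2\log n$ to $\delta_n\asymp n^{-2}$.
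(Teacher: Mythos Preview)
Your proposal is correct and follows essentially the same route as the paper. The reduction $B_n^c=\{\exists\,e\notin\mst_n:\ U_e-m_e<\delta_n\}$ is exactly the content of the paper's Lemma~\ref{lemma:equv} (stated there as $U_e-U_f\ge\delta_n$ for all $f\in\path_{\mst_n}(u,v)$, which is equivalent since $m_e$ is the maximum), and the two tails are then handled via Lemma~\ref{lemma:conditioning} and the event $E_n$ just as in the proof of Theorem~\ref{thm:wst_vs_mst}; your direct product bound for the case $\delta_n\gg n^{-2}$ is a minor and perfectly valid simplification of the paper's reference to the LLN argument.
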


Before the proof of this proposition, we check the following equivalence.
\begin{lemma}
    \label{lemma:equv}
    The following are equivalent:
    \begin{itemize}
        \item[a)] $E(G_{n, p+\delta_n})\mathbin{\big\backslash}\bigcup_{\substack{C\text{ is conn.}\\\text{comp. of }G_{n, p}}}E(G_{n, p+\delta_n}[V(C)])\subseteq \mst_n \; \forall p\in [0, 1]$,
        \item[b)] $\forall e=(u, v)\notin \mst_n, \forall f\in \path_{\mst_n}(u, v)$ we have that $U_e-U_f\ge \delta_n$.
    \end{itemize}
\end{lemma}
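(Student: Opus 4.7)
The plan is to exploit the standard correspondence that the connected components of $G_{n,p}$ coincide with those of $\mst_n\cap G_{n,p}$---a consequence of Prim's invasion algorithm being greedy in the labels $U_e$. Under this identification, two vertices $u,v$ lie in different components of $G_{n,p}$ if and only if the unique path $\path_{\mst_n}(u,v)$ contains at least one edge with label $>p$; equivalently, $p<U_{f^*}$, where $f^*$ denotes the edge of $\path_{\mst_n}(u,v)$ with the largest label. With this in hand both directions collapse to short arguments.

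For the direction (b)$\Rightarrow$(a), I would fix $p\in[0,1]$ and take an arbitrary edge $e=(u,v)$ in $E(G_{n,p+\delta_n})\setminus\bigcup_{C}E(G_{n,p+\delta_n}[V(C)])$, i.e.\ an edge with $U_e\le p+\delta_n$ whose endpoints lie in different components of $G_{n,p}$. Arguing by contradiction, if $e\notin\mst_n$ then (b) forces every $f\in\path_{\mst_n}(u,v)$ to satisfy $U_f\le U_e-\delta_n\le p$, so the whole path sits inside $G_{n,p}$, contradicting the assumption that $u$ and $v$ are in different components. Hence $e\in\mst_n$, which is exactly (a).

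For (a)$\Rightarrow$(b), given $e=(u,v)\notin\mst_n$ and any $f\in\path_{\mst_n}(u,v)$, I would let $f^*$ be the maximum-label edge of that path (so $U_f\le U_{f^*}$) and, for a small $\eta>0$, apply (a) at $p:=U_{f^*}-\eta$. Because $p<U_{f^*}$, the vertices $u,v$ lie in different components of $G_{n,p}$, so $e$ would belong to the edge set appearing in (a) whenever $U_e\le p+\delta_n$. Since $e\notin\mst_n$ by hypothesis, this possibility must fail, yielding $U_e>p+\delta_n=U_{f^*}+\delta_n-\eta$; letting $\eta\downarrow 0$ we obtain $U_e-U_f\ge U_e-U_{f^*}\ge\delta_n$.

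I do not anticipate a genuine obstacle: the statement is essentially a reformulation of (b) as an assertion about Erd\H os--R\'enyi components. The one substantive ingredient is the component identification between $G_{n,p}$ and $\mst_n\cap G_{n,p}$, which is a standard property of Prim's algorithm already used elsewhere in the paper. The only subtlety is enforcing the strict inequality $p<U_{f^*}$ while keeping $U_e\le p+\delta_n$, which is precisely what the infinitesimal perturbation $\eta$ and the limit accomplish.
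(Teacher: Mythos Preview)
Your proposal is correct and follows essentially the same approach as the paper: both directions hinge on the identification of components of $G_{n,p}$ with those of $\mst_n\cap G_{n,p}$, the (a)$\Rightarrow$(b) direction uses the maximum-label edge $f^*$ on $\path_{\mst_n}(u,v)$ together with a limiting choice of $p\uparrow U_{f^*}$, and the (b)$\Rightarrow$(a) direction is the contrapositive argument showing that an external edge between different components must have $U_e>p+\delta_n$. The only cosmetic difference is that the paper phrases (b)$\Rightarrow$(a) by picking a single $f$ on the path with $U_f>p$, whereas you bound all $f$'s at once; the content is identical.
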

\begin{proof}
\emph{a)$\Rightarrow$b)} For any fixed $e\notin \mst_n$, consider $f'\in \path_{\mst_n}(e)$ with $U_{f'}=\max_{f\in \path_{\mst_n}(e)}U_f$. Then for $p<U_{f'}$ the edges $f'$ and $ e$ run outside of any connected component of $G_{n, p}$. Since $e\notin \mst_n$, a) implies that $U_e-p> \delta_n$. Letting $p\rightarrow U_{f'}$  we get that $U_e-U_{f'}\ge \delta_n$.

\emph{b)$\Rightarrow$a)} For any fixed $p$, consider an edge $e=(u, v)\notin \mst_n$ running between different connected components of $G_{n, p}$. Then there exists $f\in \path_{\mst_n}(u, v)$ which runs between different connected components of $G_{n, p}$. Then b) implies that $\delta_n\le U_e-U_{f}<U_e-p$.
\end{proof}

\begin{proof}[Proof of Proposition \ref{prop:wst_mst_E-R}]
    If $\delta_n\ll \frac{1}{n^2}$, then the computations of Theorem \ref{thm:wst_vs_mst} a) show that $$\P\left(\exists e\notin \mst_n:\;\chi_e-m_e<\delta_n\right)\le \P(E_n^c)+\Theta(n^2\delta_n)\rightarrow 0,$$
    where $E_n=\{U_f\le \frac{2\log n}{n} \;\forall f\in E(\mst_n)\}$ as before. Hence Lemma \ref{lemma:equv} b)  is satisfied with probability $\rightarrow 1$ which is equivalent to a), i.e.,~$\P(B_n)\rightarrow 1$ as $n\rightarrow \infty$.
    
    Repeating the proof of Theorem \ref{thm:wst_vs_mst} b), if $\delta_n\gg \frac{1}{n^2}$, then Lemma \ref{lemma:equv} b) is not satisfied with probability $\rightarrow1$ giving us $\P(B_n)\rightarrow 0$. 
\end{proof}

\subsection{The local weak limit and expected total length and for large \texorpdfstring{$\beta_n$}{}'s}\label{subsec:mst_length}

\begin{proof}[Proof of Theorem \ref{thm:local_limit}]
For $\beta_n\gg n\log n$, by formula (\ref{formula:expected_diff}), there exist some $t_n=o(n)$ such that 
\begin{gather}\label{formula:for_local_lim}
\P(|E(\wst^{\beta_n}_n)\triangle E(\mst_n)|\le t_n\given \{U_e\}_{e\in E(K_n)})\rightarrow 1
\end{gather}
for a.a.s.~labels $\{U_e\}_{e\in E(K_n)}$, i.e.~with probability $\rightarrow 1$. Therefore, the local convergence follows from the following observation:

\emph{Consider a sequence of graphs $G_n$ with a locally finite limit local weak limit.
Suppose that there exist some  $t_n=o(|V (G_n)|)$ such that $\P(|E(G_n)\triangle E(H_n)| \le t_n)\rightarrow 1$. Then $H_n$ has the same local weak limit as $G_n$.}

In order to show this statement,  denote by $S_n$ the set of vertices formed by the endpoints of $E(G_n)\triangle E(H_n)$. Then $\P(|S_n|<2t_n) \rightarrow 1$.
By the local finiteness, for any $r>0$, we have that
$$\lim_{M\rightarrow\infty} \sup_n \P(|V (B_{G_n}(o, r))| > M) = 0.$$
Notice that $\P(B_{G_n}(o, r) \cap S_n\ne \emptyset, |V (B_{G_n}(o, 2r))| \le M) \le \frac{|S_n|M}{|V (G_n)|}$ which follows from the double counting of $A_n:=\{ (o,s) : o \in V_n, s \in S_n\cap B_{G_n}(o ,r), |B_s(r)| \le M \}$:
$$|V_n|\P(B_{G_n}(o, r) \cap S_n\ne \emptyset, |V (B_{G_n}(o, 2r))| \le M)\le |A_n| \le |S_n|M.$$

Therefore,
\begin{align*}
\P(B_{G_n}(o, r) \cap S_n\ne \emptyset)&\le 
    \P(B_{G_n}(o, r) \cap S_n\ne \emptyset, |V (B_{G_n}(o, 2r))| \le M)+\P(|V (B_{G_n}(o, 2r))| >M) \\
    &\le \frac{|S_n|M}{|V (G_n)|}+\P(|V (B_{G_n}(o, 2r))| >M)\rightarrow \P(|V (B_{G_n}(o, 2r))| >M) 
\end{align*}
on $\{|S_n|<2t_n\}$. Letting  $M\rightarrow\infty$ gives us that $\P(B_{G_n}(o, r) \cap S_n\ne \emptyset)\rightarrow 0$.

From this statement, we obtain Theorem \ref{thm:local_limit} the following way: for a given $M_r$, we denote by $A_n$ the set of labels which satisfy $\P(|V (B_{\mst_n}(o, 2r))| >M_r|\given \{U_e\}_e)\rightarrow 0$ and (\ref{formula:for_local_lim}). Note that $M_r$ can be chosen such that $\P(A_n)\rightarrow 1$, e.g.~$M_r=r^{2}\log r$ is a good choice by Theorem 1.3 of \cite{addario2013local}. Then,
\begin{align*}
&\E\left[\P\left(B_{\mst_n}(o, r)\ne B_{\wst_n^{\beta_n}}(o, r)\Given \{U_e\}_{e\in E(K_n)}\right)\right]\\
    &\hspace{5cm}\le P(A_n^c)+\P(|V (B_{\mst_n}(o, 2r))| >M_r|\given A_n)+o(1)=o(1).
\end{align*}
\end{proof}

To study the expected total length and the typical diameter of the minimum spanning tree, an essential connection between the minimum spanning tree and the Erdős Rényi graphs is used: for any $p$, the vertices of the connected components of $G_{n, p}$ and of $\mst_n\cap G_{n, p}$ are the same. This connection follows from the greediness of Prim's invasion algorithm and can be formulated as $V\left(\mathcal{C}_v^{E(G_{n, p})}\right)=V\left(\mathcal{C}_v^{E(G_{n, p})\cap E(\mst_n)}\right)\; \forall v\in V(K_n), \forall p$.

This property does not hold for $\wst^{\beta_n}_n$, but we can prove a relaxed version of it, illustrated in Figure \ref{fig:wst_mst_layers}, which is suitable to study the expected total length and the typical diameter of $\wst_n^{\beta_n}$.

In Subsection \ref{subsec:ideas}, we have already given a short proof of Theorem \ref{thm:length} a) for $\beta_n\gg n\log^2 n$. In this subsection, the new proof works even for $\beta_n\gg n\log n$, which does not seem a big improvement. On the other hand, the work done here is necessary for Subsection \ref{subsec:wst_diam_abr}: Lemma \ref{lemma:E-R_comps_A-B} will be used to prove Lemma \ref{lemma:H_comps}.

\begin{lemma}\label{lemma:E-R_comps_A-B} 
We write $\P_{\mathbf{u}}(\cdot)=\P(\;\cdot\given U_e=u_e \forall e\in E(K_n))$, and $\mathcal{C}_v^E$ for the connected component w.r.t.~$E$ containing $v$. For any $E\subseteq E(K_n)$ that gives a
connected subgraph and any $V\subseteq V(K_n)$, we denote by $\langle V\rangle_E$ the smallest connected graph w.r.t.~$E$ containing $V$.

Then there exists $N_0$ such that for any $\mathbf{u}\in [0,1]^{\binom{n}{2}}$ and $n\ge N_0$
  \begin{gather*}
  {\P}_{\mathbf{u}}\left(\left\langle V\left(\mathcal{C}_v^{E(G_{n, p})}\right)\right\rangle_{E\left(\wst_{n}^{\beta_n}\right)}\subseteq G_{n, p+\frac{50\log n}{\beta_n}} \ \forall v\in V(K_n) \;\forall p\in[0, 1]\right)\ge 1-n^{-2}.
	\end{gather*}
\end{lemma}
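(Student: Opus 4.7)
The plan is to reduce the claim to a direct application of Lemma~\ref{lemma:middlecond} after conditioning on $\mathbf{U}=\mathbf{u}$. Once $\mathbf{u}$ is fixed, the conductances $c(e)=e^{-\beta_n u_e}$ are deterministic and $\P_{\mathbf{u}}(\cdot)$ refers only to the randomness of the walk used by the Aldous--Broder algorithm to generate $\wst_n^{\beta_n}$.

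For a fixed $v\in V(K_n)$ and $p\in[0,1]$, set $a := e^{-\beta_n p}$ and $\eps_0 := n^{-50}$ in Notation~\ref{notation:bcond}. Then $E_{\bcond}=E(G_{n,p})$ and $E_{\scond}=\{e:u_e\ge p+50\log n/\beta_n\}$, so in particular $E(K_n)\setminus E_{\scond}\subseteq E(G_{n,\,p+50\log n/\beta_n})$. Applying formula~(\ref{formula:relaxed_greedy}) with $t=v$ yields
\begin{equation*}
\P_{\mathbf{u}}\Bigl(V(\mathcal{C}_v^{E(G_{n,p})})\subseteq V(\mathcal{C}_v^{E(\wst_n^{\beta_n})\setminus E_{\scond}})\Bigr)\ge 1-\eps_0^{1/2}n^{7/2}=1-n^{-43/2}.
\end{equation*}
On this good event, the vertices of $\mathcal{C}_v^{E(G_{n,p})}$ all lie in a single connected component of the forest $(V(K_n),E(\wst_n^{\beta_n})\setminus E_{\scond})$. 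Since $\wst_n^{\beta_n}$ is a tree, the unique path inside $\wst_n^{\beta_n}$ between any two of these vertices must avoid $E_{\scond}$, and hence the minimal connected subgraph $\langle V(\mathcal{C}_v^{E(G_{n,p})})\rangle_{E(\wst_n^{\beta_n})}$---the union of these paths---is contained in $E(K_n)\setminus E_{\scond}\subseteq E(G_{n,\,p+50\log n/\beta_n})$, which is exactly what we want.

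To make the conclusion uniform in $p$, I would observe that as $p$ ranges over $[0,1]$ the component $\mathcal{C}_v^{E(G_{n,p})}$ is a piecewise-constant function of $p$, changing only at the values in $\{u_e:e\in E(K_n)\}$, while $p+50\log n/\beta_n$ is monotone in $p$; therefore it is enough to verify the inclusion at these at most $\binom{n}{2}$ critical $p$-values (for $p$ strictly below every label, both sides are trivial). A union bound over $v\in V(K_n)$ and over these critical $p$'s then bounds the bad event by $n\binom{n}{2}\cdot n^{-43/2}\le n^{-2}$ for $n$ large enough. No genuine obstacle arises; the only real choices are (i) $\eps_0=n^{-50}$, which is small enough that $\eps_0^{1/2}n^{7/2}$ absorbs the cubic union-bound factor while making the \emph{middle} band $E_{\mcond}$ have label-width exactly $50\log n/\beta_n$, and (ii) the monotonicity reduction from continuous $p$ to the $\binom{n}{2}$ critical values.
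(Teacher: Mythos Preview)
Your proof is correct and follows the same core idea as the paper: apply Lemma~\ref{lemma:middlecond} (with $\eps_0$ a suitable power of $n^{-1}$) to each pair $(v,p)$ and then take a union bound. The one substantive difference is the discretisation of $p$. The paper places $p$ on a uniform grid of mesh $\delta_n=\tfrac{25\log n}{\beta_n}$, which forces a separate treatment of the case $\beta_n>n^6$ (where the grid has too many points for the union bound); you instead discretise at the $\binom{n}{2}$ label values $\{u_e\}$, using that $\mathcal{C}_v^{E(G_{n,p})}$ is constant between consecutive labels and the right-hand side is monotone in $p$. Your choice is cleaner: the number of critical $p$'s is always $\binom{n}{2}$ regardless of $\beta_n$, so no case split is needed and the union bound $n\binom{n}{2}\cdot n^{-43/2}\le n^{-2}$ holds uniformly.
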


\begin{proof}
Fix $n$, $\mathbf{u}\in [0,1]^{\binom{n}{2}}$, $\kappa>0$, $v\in V(K_n)$ and $i\le [\delta_n^{-1}]$, where $\delta_n:=\frac{\kappa \log n}{\beta_n}$ and $p_{n, i}:=i\delta_n$. We run the Aldous-Broder algorithm on $\c_{\mathbf{u}}:=\{\exp(-\beta_nu_e)\}$ starting at $v$. By Lemma \ref{lemma:middlecond} with 
\begin{align*}
	&E_{\bcond}:=\{e\in E(K_n)\given u_e\le p_{n, i}\}=\{e\in E(K_n)\given \c_{\mathbf{u}}(e)\ge \exp(-\beta_np_{n, i})\}, \\
	&E_{\scond}:=\left\{e\in E(K_n)\given u_e\ge p_{n, i}+\frac{\kappa\log n}{\beta_n}\right\}=\{e\in E(K_n)\given \c_{\mathbf{u}}(e)\le \exp(-\beta_np_{n, i}-\kappa\log n)\}
\end{align*}
and $\eps_0=\exp(-\kappa\log n)=n^{-\kappa}$, we obtain that
\begin{align*}
\P_{v, c_\mathbf{u}}\left( \langle V(\mathcal{C}_v^{E(G_{n, p_{n, i}})})\rangle_{E\left(\wst_{n}^{\beta_n}\right)}\not \subseteq G_{n, p_{n, i+1}}\right)=\P\left(V\left(\C_{v}^{E_{\bcond_i}}\right) \not\subseteq \C_{v}^{E(\wst_{\beta_n})\setminus E_{\scond_i}}\right)\le n^{(-\kappa+7)/2}.
\end{align*}

If $(i-1)\delta_n\le p\le i\delta_n$, then we have that $\mathcal{C}_v^{E(G_{n, p})}\subseteq \mathcal{C}_v^{E(G_{n, i\delta_n})}$, so the previous line gives us that $\langle V(\mathcal{C}_v^{E(G_{n, p})})\rangle_{E\left(\wst_{n}^{\beta_n}\right)}\subseteq G_{n, p+2\delta_n}$ with probability $\ge 1-n^{(7-\kappa)/2}$.

The $n$'s with $\beta_n\ge n^6$ also satisfy $\beta_n\gg n^5\log n$, so the Aldous-Broder and Prim's invasion algorithm build the same edges with probability $1-O(n^{-2})$ by (\ref{formula:number_small_gaps}), and we even have $\langle V(\mathcal{C}_v^{E(G_{n, p})})\rangle_{E\left(\wst_{n}^{\beta_n}\right)}\subseteq G_{n, p}$ on this event.

Hence, we may assume that $\beta_n\le n^6$. Choosing $\kappa:=25$, by the union bound for $v\in V(K_n)$ and $i=1, \ldots , n^6$,
\begin{align*}
&\P_{\mathbf{u}}\left(\exists v\in V(K_n) \;\exists p\in[0, 1]:\ \langle V(\mathcal{C}_v^{E(G_{n, p})})\rangle_{E\left(\wst_{n}^{\beta_n}\right)}\not \subseteq G_{n, p+2\delta_n}\right)\\
&\hspace{3cm}\le \P_{\mathbf{u}}\left(\exists v\in V(K_n) \;\exists i\le n^6:\ \left(\langle V(\mathcal{C}_v^{E(G_{n, p_{n, i}})})\rangle_{E\left(\wst_{n}^{\beta_n}\right)}\right)\not \subseteq G_{n, p_{n, i+1}}\right)\\
&\hspace{3cm}\le n^7 n^{(7-\kappa)/2}\le n^{(21-\kappa)/2}= n^{-2}.
\end{align*}
\end{proof}

\begin{proof}[Proof of Theorem \ref{thm:length} a) and b)]
We show that $\E\left[L(\wst_n^{\beta_n})\right]-\E\left[L(\mst_n)\right]=O\left(\frac{\log n}{\beta_n}\right)+O(n^{-1})$.

We denote by $k(\wst^{\beta_n}_n,p)$ and $k(\mst_n,p)$ the number of the connected components of $\wst^{\beta_n}_n \cap G_{n, p}$ and $\mst_n \cap G_{n, p}$, respectively. We also use the notation $N(\wst^{\beta_n}_n, p):=|E(\wst^{\beta_n}_n \cap G_{n, p})|$ which can be written also as $N(\wst^{\beta_n}_n, p)=\sum_{e\in \wst^{\beta_n}_n}\mathds{1}[U_e\le p]$.

The integral (8) of \cite{steele2002minimal} expressing $L\left(\mst_n\right)$ can be generalized to the weighted spanning trees as  $L\left(\wst_n^{\beta_n}\right)=\int_{0}^1 k(\wst^{\beta_n}_n, t) \;d t-1$. Indeed,
\begin{align*}
L\left(\wst_n^{\beta_n}\right)&=\sum_{e\in \wst^{\beta_n}_n}U_e=\sum_{e\in \wst^{\beta_n}_n}\int_0^1 \mathds{1}\left[U_e>t\right] \;dt\\
&=\int_0^1 n-1-N(\wst^{\beta_n}_n, t) \;dt=\int_0^1 k(\wst^{\beta_n}_n, t)-1 \;dt.
\end{align*}

Lemma \ref{lemma:E-R_comps_A-B} with $\delta_n=\frac{25\log n}{\beta_n}$ implies that for large $n$ and any $\{u_e\}_{e\in E(K_n)}$,
$$\P\left(k(\wst^{\beta_n}_n, p+2\delta_n)\le k\left(\mst_n, p\right) \ \forall p\in [0, 1]\given U_e=u_e \;\forall e\in E(K_n)\right)\ge 1-n^{-2},$$
so our statement for $\beta_n\ge \log n$ is obtained by
\begin{align*}
\E\left[L(\wst_n^{\beta_n})-L(\mst_n)\right]&\le \E\left[\int_{0}^1 k_{\mst_n}\left(t-2\delta_n\right)-k_{\mst_n}\left(t\right) \;dt\right]+n\cdot n^{-2}\\
&\le\E\left[\int_{0}^{2\delta_n}n \;dp\right]+n^{-1}=O\left(\frac{n\log n}{\beta_n}\right)+n^{-1}
\end{align*}
For $\beta_n\le \log n$, the statement follows from the trivial estimate $\E[L(\wst_n^{\beta_n})]\le n-1$.
\end{proof}

\subsection{The typical diameter of the \texorpdfstring{$\wst^{\beta_n}_n$}{} for large \texorpdfstring{$\beta_n$}{}'s}
\label{subsec:wst_diam_abr}

The main result of \cite{addario2009critical} is the following.
\begin{theorem*}[Theorem 1 of \cite{addario2009critical}]
	$$\E[\diam(\mst_n)]=\Theta(n^{1/3}).$$  
\end{theorem*}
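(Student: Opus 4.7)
The plan is to exploit the fundamental coupling between $\mst_n$ and the Erdős--Rényi graphs $G_{n,p}$: because Prim's invasion algorithm is greedy, the vertex sets of the connected components of $G_{n,p}$ and of $\mst_n\cap G_{n,p}$ coincide for every $p\in[0,1]$. Let $H_{n,p}$ denote the largest component of $G_{n,p}$ and fix the geometric interpolation $p_i:=\tfrac{1}{n}+(5/4)^i/n^{4/3}$ running from a bottom scale $p_0$ inside the critical window up to $p_*:=C\log n/n$ at which $G_{n,p_*}$ is connected a.a.s. Writing $T_p:=\mst_n\cap G_{n,p}[V(H_{n,p})]$, which is a spanning tree of $V(H_{n,p})$, every vertex outside $V(H_{n,p_*})$ attaches to $T_{p_*}$ through a single $\mst_n$-branch, yielding the decomposition
\begin{equation*}
\diam(\mst_n)\le \diam(T_{p_*})+2\max_{v\in V(K_n)}\mathrm{dist}_{\mst_n}\!\bigl(v,\,V(H_{n,p_*})\bigr),
\end{equation*}
together with the telescoping bound $\diam(T_{p_{i+1}})\le \diam(T_{p_i})+2\,\lp\!\bigl(H_{n,p_{i+1}}[V(K_n)\setminus V(H_{n,p_i})]\bigr)$.

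This reduces the upper bound to three ingredients. First, inside the critical window one has $\lp(H_{n,p_0})=O(n^{1/3})$ with probability $1-o(1)$; this follows either from Aldous's Brownian excursion scaling limit for the critical clusters or from a direct second-moment estimate on counts of self-avoiding paths of length $\lceil Kn^{1/3}\rceil$ in $G_{n,p_0}$. Second, the telescoping sum $\sum_i \lp\!\bigl(H_{n,p_{i+1}}[V(K_n)\setminus V(H_{n,p_i})]\bigr)$ is $O(n^{1/3})$ with high probability: the subgraphs appearing in the sum behave like near-critical Erdős--Rényi graphs on $n_i\approx n\bigl(1-\Theta((p_in-1))\bigr)$ vertices whose longest paths are $O(n_i^{1/3})$, and the geometric factor $(5/4)^i$ together with the shrinkage of $n_i$ past criticality makes the series sum to $O(n^{1/3})$. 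Third, once $p=p_*=C\log n/n$ with $C$ large, $G_{n,p_*}$ is connected with probability $1-o(1)$, so $\max_v\mathrm{dist}_{\mst_n}(v,V(H_{n,p_*}))=0$ on that event; the contribution of the exceptional event to the expectation is absorbed by the crude bound $\diam(\mst_n)\le n-1$ and a tail estimate on connectivity.

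For the matching lower bound $\E[\diam(\mst_n)]\ge cn^{1/3}$, it suffices to show $\P(\diam(\mst_n)\ge cn^{1/3})\ge c'>0$. By known scaling results for critical Erdős--Rényi graphs, the graph diameter of $H_{n,p_0}$ is $\Omega(n^{1/3})$ with probability bounded away from zero; picking graph-diametral $u,v\in V(H_{n,p_0})$, the unique $u$--$v$ path in $\mst_n$ lies entirely inside the subtree $T_{p_0}$ by the component coupling, so its length is at least the graph distance $\mathrm{dist}_{H_{n,p_0}}(u,v)\ge cn^{1/3}$, since $T_{p_0}$ is a spanning tree of the subgraph $H_{n,p_0}$ and tree distances in a spanning tree dominate graph distances. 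The principal obstacle is the telescoping step: the annular subgraphs $H_{n,p_{i+1}}[V(K_n)\setminus V(H_{n,p_i})]$ sit precisely in the barely supercritical regime, where component sizes and longest paths fluctuate delicately, so a uniform-in-$i$ control of the sum $\sum_i\lp(\cdot)$ requires careful multi-scale estimates on near-critical Erdős--Rényi graphs; this is the technical heart of the Addario-Berry--Broutin--Reed proof and is precisely what Subsection~\ref{subsec:wst_diam_abr} adapts to $\wst_n^{\beta_n}$ via Lemma~\ref{lemma:H_comps}.
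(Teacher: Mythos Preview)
Your upper-bound strategy is exactly the multiscale decomposition of Addario-Berry--Broutin--Reed as summarized in the paper: trap $\diam(\mst_n\cap H_{n,p_0})$ by $\lp(H_{n,p_0})=O(n^{1/3})$, then telescope via Lemma~\ref{lemma2_abr} using the bound $\lp(G_{n,p_{i+1}}[V(K_n)\setminus V(H_{n,p_i})])\le n^{1/3}/\sqrt{f_i}$ of Lemma~\ref{lemma3_abr}. Two minor deviations are worth flagging. First, the paper (and ABBR09) stops the telescope at $f_t\sim n^{1/3}/\log n$ and handles the remainder by a separate argument (Lemma~4 of \cite{addario2009critical}, our Lemma~\ref{lemma:from_supcrit_to_whole}): Prim's invasion from any vertex hits the giant in $o(n^{1/3})$ steps. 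Your choice to push the telescope all the way to the connectivity threshold $p_*=C\log n/n$ also works, but note that the ``well-behaved'' condition~(I), $|H_{n,p_i}|\ge \tfrac32 n^{2/3}f_i$, becomes vacuous once $f_i\gtrsim n^{1/3}$, so Lemma~\ref{lemma3_abr} as stated does not cover those scales; you would need the (easy) supplementary observation that once $p_i-1/n\gg 1/n$ the leftover graph is genuinely subcritical and its longest path is $O(\log n)$. Second, your heuristic ``longest paths are $O(n_i^{1/3})$'' for the annuli is too weak to make the telescoping sum converge (it would give $O(n^{1/3}\log n)$); the actual mechanism is that removing $H_{n,p_i}$ leaves a \emph{barely subcritical} graph with parameter $\epsilon_i\asymp f_i/n^{1/3}$, whence $\lp\lesssim n^{1/3}/\sqrt{f_i}$, and it is the geometric decay of $1/\sqrt{f_i}$ that makes the sum $O(n^{1/3})$---you correctly identify this as the technical heart.

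Your lower bound differs from the one the paper actually carries out (for the $\wst$ analogue): there the argument goes through the \emph{sub}critical side $p=1/n-2f_0/n^{4/3}$, uses that $H_{n,p}$ is a.a.s.\ a tree of size $\Theta(n^{2/3}\log f_0/f_0^2)$, hence equals $\mst_n\cap H_{n,p}$ and is distributed as a uniform labelled tree, whose diameter is $\Theta(\sqrt{|V|})=\Theta(n^{1/3}\sqrt{\log f_0}/f_0)$. Your route via the graph diameter of the \emph{critical} $H_{n,p_0}$ and the inequality ``tree distance $\ge$ graph distance'' is equally valid and arguably cleaner, but it imports the Nachmias--Peres type result $\diam(H_{n,1/n})=\Theta(n^{1/3})$ as a black box, whereas the subcritical-tree argument is self-contained once one knows Szekeres/Aldous for the diameter of uniform random trees.
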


Our key observation is that the proof appearing in that paper can be modified to prove our Theorem \ref{thm:has_diam_mst}, claiming that, for any $\beta_n\gg n^{4/3}\log n$ and $\eps>0$, there exist $c=c(\eps)$ and $C=C(\eps)$ such that $$\P(cn^{1/3}\le \wst_{n}^{\beta_n}\le Cn^{1/3})\ge 1-\eps.$$ However, similarly to Subsection \ref{subsec:mst_length}, an extra difficulty appears compared to the case of $\mst_n$: in \cite{addario2009critical}, the main connection between $\mst_n$ and the Erdős-Rényi graphs is that $\mst_n\cap H_{n, p_{i}}$ is connected, which does not hold for $\wst_n^{\beta_n}$. Fortunately, this can be replaced by Lemma \ref{lemma:H_comps}, which
is the key novel ingredient in
our proof.

The idea behind the lower bound of the typical diameter is the following. We will see in Lemma~\ref{lemma:H_comps} that $V(H_{1/n-2a_n/n^{4/3}})$ is connected w.r.t.~$E(G_{n, 1/n-a_n/n^{4/3}}\cap \wst^{\beta_n}_n)$ with probability $\rightarrow$ 1 if $\beta_n\ll n^{4/3}\log n$ and $a_n\ge 1$. On the other hand, by Subsection 5.3 of \cite{janson2011random}, $H_{1/n-2a_n/n^{4/3}}$ has $\Theta\left(\frac{n^{2/3}}{a_n^2}\log a_n\right)$ vertices and even the connected component of $G_{1/n-a_n/n^{4/3}}$ containing $H_{1/n-2a_n/n^{4/3}}$ is a tree for any $1\ll a_n\ll n^{1/3}$ with probability $\rightarrow 1$. Therefore, the edges of the $\wst$ and the $\mst$ are the same inside $H_{1/n-2a_n/n^{4/3}}$, and distributed as the uniform spanning tree on $\Theta(n^{2/3}a_n^{-2}\log a_n)$ vertices. Since $a_n$ can tend to infinity arbitrary slowly, it is reasonable to believe that the typical diameter of $\wst_n^{\beta_n}$ grows at least like $\left(\Omega\left(n^{2/3}\right)\right)^{1/2}=\Omega\left(n^{1/3}\right)$. The precise proof is detailed at the end of this subsection.

The upper bound for the typical diameter of $\wst_{n}^{\beta_n}$ can also be done similarly to the one for $\mst_n$.  We summarize now the proof of \cite{addario2009critical}, rephrasing it to deal with $\E[\diam(\wst_{n}^{\beta_n})|A^{\mathrm{incr}}_n]$ for some $A^{\mathrm{incr}}_n$ with $\P(A^{\mathrm{incr}}_n)\rightarrow 1$.
We accept the results of \cite{addario2009critical} on the Erdős-Rényi graph as facts, and we only detail how the result on the diameter of $\wst_n^{\beta_n}$ (or 
$\mst_n$) follows from these facts.

\begin{notation}\label{notation:p_i's}
Consider $f_0:=f_0^{(n)}\gg 1$ and $f_i:=f_i^{(n)}:=(5/4)^i f_0^{(n)}$ for $i\in \frac{\N}{2}$. 

Let $t:=t^{(n)}$ be the first integer such that $f_t^{(n)}\ge \frac{n^{1/3}}{\log n}$.

Let $p_i:=p_i^{(n)}:=\frac{1}{n}+\frac{f_i}{n^{4/3}}$ for $i=0, \frac{1}{2}, \ldots, t$.
In order to shorten the formulation of Lemma~\ref{lemma:H_comps}, slightly inconsistently, we also introduce $p_{-1/2}=\frac{1}{n}-\frac{f_0}{n^{4/3}}$ and $p_{-1}=\frac{1}{n}-\frac{2f_0}{n^{4/3}}$.

For any $E\subseteq E(K_n)$ that gives a
connected subgraph and any $V\subseteq V(K_n)$, we denote by $\langle V\rangle_E$ the smallest connected graph w.r.t.~the edges $E$ containing the vertices $V$.
\end{notation}

Then we have a sequence of probabilities $\{p_i\}_{i=0, \frac{1}{2}, \ldots, t}$ interpolating between $p_0=\frac{1}{n}+\frac{f_0}{n^{4/3}}$ and $p_t=\frac{1}{n}+\frac{1}{n \log n}$.

\begin{lemma}\label{lemma:H_comps} We consider $\beta_n\gg n^{4/3}\log n$, use Notation \ref{notation:p_i's} and write $\P_{\mathbf{u}}(\cdot):=\P[\cdot|U_e=u_e \;\forall e\in E(K_n)]$. Then as $n\rightarrow \infty$ we have that
  \begin{gather*}
		\inf_{\mathbf{u}\in [0,1]^{\binom{n}{2}}}{\P}_{\mathbf{u}}\left(\forall i\in \left\{-1, -\frac{1}{2}, \ldots, t\right\}:\;\langle V(H_{n, p_i})\rangle_{E\left(\wst_{n}^{\beta_n}\right)}\subseteq G_{n, p_{i+\frac{1}{2}}}\right)\ge 1-o(n^{-2}).
	\end{gather*}
\end{lemma}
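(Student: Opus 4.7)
The plan is to apply Lemma \ref{lemma:middlecond} individually for each index $i \in \{-1,-\tfrac{1}{2},0,\tfrac{1}{2},\ldots,t\}$, bypassing Lemma \ref{lemma:E-R_comps_A-B} whose $1-n^{-2}$ bound is too weak for the $1-o(n^{-2})$ conclusion we need. Fix $\mathbf{u}\in[0,1]^{\binom{n}{2}}$, and for each $i$ pick an arbitrary vertex $v_i\in V(H_{n,p_i})$ (if $H_{n,p_i}$ is a single vertex, the inclusion is vacuous). Apply Lemma \ref{lemma:middlecond} to the deterministic electric network with conductances $c(e):=\exp(-\beta_n u_e)$, started at $v_i$, with thresholds $a:=\exp(-\beta_n p_i)$ and $\eps_0^{(i)}:=\exp\!\bigl(-\beta_n(p_{i+1/2}-p_i)\bigr)$. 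Under these choices $E_{\bcond}=\{e:u_e\le p_i\}=E(G_{n,p_i})$ (so $\C_{v_i}^{E_{\bcond}}=H_{n,p_i}$) and $E_{\scond}=\{e:u_e\ge p_{i+1/2}\}=E(K_n)\setminus E(G_{n,p_{i+1/2}})$. The lemma then yields, with probability at least $1-(\eps_0^{(i)})^{1/2}n^{7/2}$, that every vertex of $V(H_{n,p_i})$ lies in the $v_i$-component of the graph $(V(K_n),E(\wst_n^{\beta_n})\setminus E_{\scond})$; since $\wst_n^{\beta_n}$ is a tree, this forces the unique subtree spanning $V(H_{n,p_i})$ to avoid $E_{\scond}$, i.e.~$\langle V(H_{n,p_i})\rangle_{E(\wst_n^{\beta_n})}\subseteq G_{n,p_{i+1/2}}$.

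Next, the probability bookkeeping. From Notation \ref{notation:p_i's} the gap $p_{i+1/2}-p_i$ equals $f_0/n^{4/3}$ for $i=-1$, equals $2f_0/n^{4/3}$ for $i=-\tfrac{1}{2}$, and equals $\bigl((5/4)^{1/2}-1\bigr)f_i/n^{4/3}\ge \tfrac{1}{9}\,f_0/n^{4/3}$ for $i\ge 0$, so a single constant $c>0$ bounds it from below uniformly: $p_{i+1/2}-p_i\ge c\,f_0/n^{4/3}$. Because $\beta_n\gg n^{4/3}\log n$ and $f_0\gg 1$, we obtain $\beta_n(p_{i+1/2}-p_i)\ge c f_0 \beta_n/n^{4/3}\gg \log n$ with arbitrarily large multiplicative constant, whence
\[
(\eps_0^{(i)})^{1/2}n^{7/2}=\exp\!\Bigl(-\tfrac{\beta_n(p_{i+1/2}-p_i)}{2}\Bigr)\,n^{7/2}
\]
decays faster than any fixed polynomial in $n$. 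A union bound over the $2t+3=O(\log n)$ indices therefore gives total failure probability $o(n^{-2})$, uniformly in $\mathbf{u}$ since the estimate in Lemma \ref{lemma:middlecond} is deterministic in the conductances.

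I do not expect a serious obstacle. The key improvement over Lemma \ref{lemma:E-R_comps_A-B} is that here we need the inclusion only at the fixed coarse grid $\{p_i\}$ with gaps of order $f_0/n^{4/3}$, a scale that beats the $\log n/\beta_n$ slack in Lemma \ref{lemma:E-R_comps_A-B}, so the effective exponent $\kappa$ in $\eps_0^{(i)}=n^{-\kappa}$ can be taken as large as desired while keeping the union bound over only $O(\log n)$ events. The only mildly delicate bookkeeping points are verifying the uniform gap lower bound down to $i=-1$ (where $H_{n,p_i}$ is subcritical and possibly trivial—then the conclusion is vacuous) and observing that the choice of $v_i$ from $V(H_{n,p_i})$ is legitimate because, after conditioning on $\mathbf{u}$, $H_{n,p_i}$ is deterministic.
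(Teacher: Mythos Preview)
Your proof is correct and follows essentially the same underlying idea as the paper: both rely on Lemma~\ref{lemma:middlecond} applied with $a=\exp(-\beta_n p_i)$ and $\eps_0$ determined by the gap $p_{i+1/2}-p_i$. The only difference is packaging. The paper derives Lemma~\ref{lemma:H_comps} as a one-line corollary of Lemma~\ref{lemma:E-R_comps_A-B} (noting that $p_{i+1/2}-p_i\ge p_{1/2}-p_0=\Theta(f_0 n^{-4/3})\ge 50\log n/\beta_n$ for large $n$), whereas you skip that intermediate lemma and invoke Lemma~\ref{lemma:middlecond} directly at each of the $O(\log n)$ grid points. Your route has a small bookkeeping advantage: since Lemma~\ref{lemma:E-R_comps_A-B} is stated with the specific bound $1-n^{-2}$ (coming from the fixed choice $\kappa=25$), the paper's proof as literally written yields only $1-n^{-2}$ rather than the claimed $1-o(n^{-2})$; one would have to remark that $\kappa$ can be taken larger there. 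Your direct application with $\eps_0^{(i)}=\exp(-\beta_n(p_{i+1/2}-p_i))$ makes the superpolynomial decay transparent and gives $1-o(n^{-2})$ without that extra remark.
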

\begin{proof}
This is a simple corollary of Lemma \ref{lemma:E-R_comps_A-B}, since $(p_{i+1/2}-p_i)\ge p_{1/2}-p_0=\Theta(f_0 n^{-4/3})\ge \frac{50\log n}{\beta_n}$ if $\beta_n\gg n^{4/3}\log n$ and $n$ is large enough.
\end{proof}

\begin{notation*}
Consider $T_{n, p_i}:=\langle V(H_{n, p_i})\rangle_{E(\wst_{n}^{\beta_n})}$. For its diameter, we write $D_i:=\diam(T_{n, p_i})$. 
 
The length of the longest path in a not necessarily connected graph is denoted by $\lp$.

Let $A^{\mathrm{incr}}_n$ be the event $A^{\mathrm{incr}}_n:=A^{\mathrm{incr}}_{n, f_0}:=\bigcap_{i=0, \frac{1}{2}, \ldots, t} \left\{E(T_{n, p_i})\subseteq E(G_{n, p_i+\frac{1}{2}})\right\}\cap \left\{H_{p_i}\subseteq H_{p_{i+\frac{1}{2}}}\right\}.$
\end{notation*}

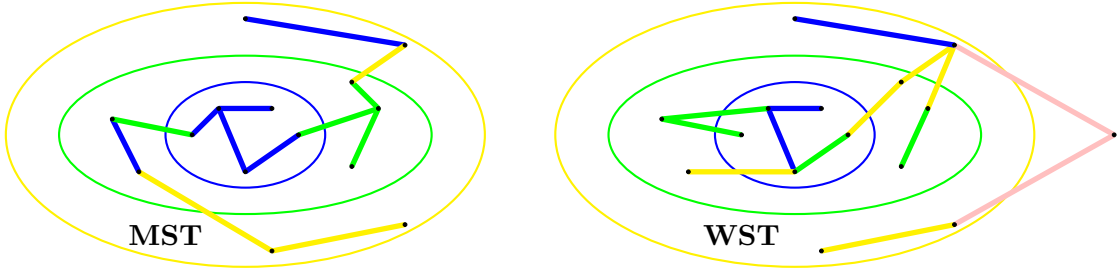
\begin{figure}\label{fig:wst_mst_layers}
\centering
\begin{tikzpicture}[scale=0.7]
\coordinate (1) at (-1, 0);
\coordinate (2) at (-0.5, 0.5);
\coordinate (3) at (0.5, 0.5);
\coordinate (4) at (1, 0);
\coordinate (5) at (0, -0.7);
\coordinate (6) at (2, 1);
\coordinate (7) at (2.5, 0.5);
\coordinate (8) at (2, -0.6);
\coordinate (9) at (-2, -0.7);
\coordinate (10) at (-2.5, 0.3);
\coordinate (11) at (0.5, -2.2);
\coordinate (12) at (0, 2.2);
\coordinate (13) at (3, -1.7);
\coordinate (14) at (3, 1.7);

\draw[thick, yellow] (0, 0) ellipse (4.5 and 2.5); 
\draw[thick, green] (0, 0) ellipse (3.5 and 1.5);
\draw[thick, blue] (0, 0) ellipse (1.5 and 1); 

\draw[blue, line width=2pt](1) -- (2);
\draw[blue, line width=2pt](2) -- (3);
\draw[blue, line width=2pt](2) -- (5);
\draw[blue, line width=2pt](5) -- (4);
\draw[blue, line width=2pt](9) -- (10);
\draw[blue, line width=2pt](12) -- (14);

\draw[green, line width=2pt](7) -- (8);
\draw[green, line width=2pt](6) -- (7);
\draw[green, line width=2pt](1) -- (10);
\draw[green, line width=2pt](7) -- (4);

\draw[yellow, line width=2pt](9) -- (11);
\draw[yellow, line width=2pt](6) -- (14);
\draw[yellow, line width=2pt](11) -- (13);

\foreach \x in {1,...,14} \filldraw[] (\x) circle (1pt); 
\node[] at (-1.5, -1.9) {\textbf{MST}};
\end{tikzpicture}
\qquad
\begin{tikzpicture}[scale=0.7]
\coordinate (1) at (-1, 0);
\coordinate (2) at (-0.5, 0.5);
\coordinate (3) at (0.5, 0.5);
\coordinate (4) at (1, 0);
\coordinate (5) at (0, -0.7);
\coordinate (6) at (2, 1);
\coordinate (7) at (2.5, 0.5);
\coordinate (8) at (2, -0.6);
\coordinate (9) at (-2, -0.7);
\coordinate (10) at (-2.5, 0.3);
\coordinate (11) at (0.5, -2.2);
\coordinate (12) at (0, 2.2);
\coordinate (13) at (3, -1.7);
\coordinate (14) at (3, 1.7);
\coordinate (15) at (6, 0);

\draw[thick, yellow] (0, 0) ellipse (4.5 and 2.5); 
\draw[thick, green] (0, 0) ellipse (3.5 and 1.5);
\draw[thick, blue] (0, 0) ellipse (1.5 and 1); 

\draw[blue, line width=2pt](2) -- (3);
\draw[blue, line width=2pt](2) -- (5);

\draw[green, line width=2pt](1) -- (10);
\draw[green, line width=2pt](2) -- (10);
\draw[green, line width=2pt](5) -- (4);
\draw[green, line width=2pt](7) -- (8);

\draw[yellow, line width=2pt](4) -- (6);
\draw[yellow, line width=2pt](6) -- (14);
\draw[yellow, line width=2pt](7) -- (14);
\draw[yellow, line width=2pt](5) -- (9);
\draw[yellow, line width=2pt](11) -- (13);

\draw[pink, line width=2pt](13) -- (15);
\draw[pink, line width=2pt](14) -- (15);
\draw[blue, line width=2pt](12) -- (14);
\foreach \x in {1,...,15} \filldraw[] (\x) circle (1pt);
\node[] at (-1, -1.9) {\textbf{WST}};
\end{tikzpicture}
\caption{$\mathcolor{blue}{\mathbf{E(G_{n, p_0})}}$, $\mathcolor{green}{\mathbf{E(G_{n, p_{1/2}})\setminus E(G_{n, p_{0}})}}$, $\mathcolor{black!10!yellow}{\mathbf{E(G_{n, p_{3/2}})\setminus E(G_{n, p_{1}})}}$ and  $\mathcolor{black!5!pink}{\mathbf{E(G_{n, p_{2}})\setminus E(G_{n, p_{3/2}})}}$ are the edge sets denoted by the respective colors and we draw blue, green and yellow ellipses for $H_{n, p_{0}}$, $H_{n, p_{1/2}}$ and $H_{n, p_{1}}$. On $A^{\mathrm{incr}}_n$, the largest connected components $\{H_{n, p_{j/2}}\}_{j\ge 0}$ form a growing sequence of subgraphs of $K_n$, and can be thought as layers. The vertices of a layer can be connected by the edges of $\wst^{\beta_n}$ coming from at most one layer above. This is slightly more difficult compared to the case of $\mst_n$, where the vertices of each layer can be connected inside the layer by the edges of $\mst_n$. }
\end{figure}

\begin{prop}\label{prop:to_supcrit} For any $f_0=f_{0}^{(n)}\gg 1$ there exists $N_0$ such that for $n\ge N_0$,
$$\E[D_t\given A^{\mathrm{incr}}_n]\le 3f_0^4n^{1/3}.$$
\end{prop}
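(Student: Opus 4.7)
The plan is to imitate the telescoping diameter bound of \cite{addario2009critical} for $\mst_n$, using Lemma \ref{lemma:H_comps} (specifically, the containments it guarantees on $A^{\mathrm{incr}}_n$) as a replacement for the $\mst$-specific fact that $\mst_n \cap H_{n, p}$ is connected. The starting point is the trivial decomposition
\[
D_t \;\le\; D_0 \;+\; \sum_{i \in \{0, 1/2, \ldots, t - 1/2\}} (D_{i+1/2} - D_i).
\]
For the base term, on $A^{\mathrm{incr}}_n$ we have $T_{n, p_0} \subseteq G_{n, p_{1/2}}$, hence $D_0 \le \lp(G_{n, p_{1/2}})$, which is controlled by the standard near-critical Erd\H{o}s--R\'enyi longest-path estimates of \cite{addario2009critical} at scale $f_0$, giving an $O(f_0 n^{1/3})$ contribution in expectation.

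The heart of the argument is the per-step increment bound. The key observation is that $V(H_{n, p_i}) \subseteq V(H_{n, p_{i+1/2}})$ on $A^{\mathrm{incr}}_n$ forces $T_{n, p_i} \subseteq T_{n, p_{i+1/2}}$ as subtrees of $\wst_n^{\beta_n}$. Since $\wst_n^{\beta_n}$ is a tree, any path inside $T_{n, p_{i+1/2}}$ enters and leaves the subtree $T_{n, p_i}$ at most once by acyclicity, so a diameter-realizing path of $T_{n, p_{i+1/2}}$ splits into a middle portion lying inside $T_{n, p_i}$ (of length $\le D_i$) and two ``hanging branches''. Each such branch consists of vertices in $V(K_n) \setminus V(H_{n, p_i})$ joined by edges of $E(T_{n, p_{i+1/2}}) \subseteq E(G_{n, p_{i+1}})$ (again by Lemma \ref{lemma:H_comps}), hence is a path in $G_{n, p_{i+1}}[V(K_n) \setminus V(H_{n, p_i})]$. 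This yields
\[
D_{i+1/2} - D_i \;\le\; 2\, \lp\bigl(G_{n, p_{i+1}}[V(K_n) \setminus V(H_{n, p_i})]\bigr) + 2.
\]

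The final step is to invoke the Erd\H{o}s--R\'enyi longest-path estimates from \cite{addario2009critical} on the non-giant part of $G_{n, p_{i+1}}$ at scales $f_i = (5/4)^i f_0$: these decay geometrically in $f_i$, so that the telescoping sum is bounded by a constant multiple of $n^{1/3}$, with total multiplicative constant comfortably absorbed by $f_0^4$. Combining with the base estimate and using $\P(A^{\mathrm{incr}}_n)^{-1} = 1 + o(1)$ (which follows from $\P(A^{\mathrm{incr}}_n) \to 1$, itself a consequence of Lemma \ref{lemma:H_comps} together with standard control of the nesting $H_{n, p_i} \subseteq H_{n, p_{i+1/2}}$ in the critical window from \cite{luczak1990component}) then yields $\E[D_t \mid A^{\mathrm{incr}}_n] \le 3 f_0^4 n^{1/3}$ for $n \ge N_0$.

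The main obstacle is the increment bound: it must be set up deterministically on $A^{\mathrm{incr}}_n$ and combined with the acyclicity of $\wst_n^{\beta_n}$ to justify the Steiner-tree decomposition, since (unlike in the $\mst$ case) the subtree $T_{n, p_i}$ is only guaranteed to lie in $G_{n, p_{i+1/2}}$ rather than being the restriction of $\wst_n^{\beta_n}$ to $H_{n, p_i}$. Once this deterministic structure is in place, the remainder of the proof follows the same Erd\H{o}s--R\'enyi longest-path computations of \cite{addario2009critical}, now applied to the slightly shifted containment coming from Lemma \ref{lemma:H_comps} in place of Prim's greediness.
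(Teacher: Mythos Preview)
Your telescoping decomposition and the increment bound
\[
D_{i+1/2}-D_i \le 2\,\lp\bigl(G_{n,p_{i+1}}[V(K_n)\setminus V(H_{n,p_i})]\bigr)+2
\]
are correct and match the paper's use of Lemma~\ref{lemma2_abr}; your acyclicity argument is just a direct tree-specific derivation of that lemma. The gap is in how you pass to expectations. You propose to sum the increments from $i=0$, and for the base term you bound $D_0\le \lp(G_{n,p_{1/2}})$. But the only tail input the paper invokes is Lemma~\ref{lemma3_abr}, namely $\P(G_{n,p_i}\text{ not well-behaved})\le 6e^{-\sqrt{f_i/8}}$. On the complement of this event the longest path can be as large as $n$, so from these tools alone you only get
\[
\E\bigl[\lp(G_{n,p_{i+1}}[V(K_n)\setminus V(H_{n,p_i})])\mid A^{\mathrm{incr}}_n\bigr]
\;\le\; \frac{n^{1/3}}{\sqrt{f_i}} + O\bigl(n\,e^{-\sqrt{f_i/8}}\bigr),
\]
and similarly $\E[\lp(G_{n,p_{1/2}})\mid A^{\mathrm{incr}}_n]\le f_{1/2}^4 n^{1/3}+O(n\,e^{-\sqrt{f_0/8}})$. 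The tail term $n\,e^{-\sqrt{f_0/8}}$ is \emph{not} dominated by $f_0^4 n^{1/3}$ when $f_0=o((\log n)^2)$, which the hypothesis $f_0\gg 1$ permits. So your sketch, as written, does not close.

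The paper fixes exactly this by introducing $i^*$, the first half-integer from which all $G_{n,p_j}$ are well-behaved. The point is that on $\{i^*=i<t\}$ one has \emph{by definition} $\lp(H_{n,p_{i^*}})\le f_{i^*}^4 n^{1/3}$, so the base term $D_{i^*}$ is bounded by $f_{i^*}^4 n^{1/3}$ deterministically on that event; weighting by $\P(i^*=i)\le f_i^{-5}$ and summing gives a convergent series in $n^{1/3}/f_i$. The telescoping is then only performed over $i\ge i^*$, where the well-behaved bound $n^{1/3}/\sqrt{f_i}$ holds pointwise, so no tail term of order $n$ ever appears. The case $i^*=t$ costs only $n\cdot\P(i^*=t)=O(1)$. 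This two-part split (random starting index plus deterministic telescoping above it) is the missing idea in your outline.
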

\begin{remark*}
Since $p_0^{(n)}-\frac{1}{n}\gg \frac{1}{n^{4/3}}$, we have that $\P\left(H_{n, p_i}\subseteq H_{n, p'}\; \forall p_0\le p\le p'\right)\rightarrow 1$ by \cite{luczak1990component}. This and Lemma \ref{lemma:H_comps} imply that $\P(A^{\mathrm{incr}}_n)\rightarrow 1$ for $\beta_n \gg n^{4/3}\log n$.
\end{remark*}
In order to prove Proposition \ref{prop:to_supcrit}, we recall some results of \cite{addario2009critical}.

\begin{defi}
	An instance of $G_{n, p_i}$ is \emph{well-behaved} if
	\begin{itemize}
		\item[(I)]$|H_{n, p_i}|\ge \frac{3}{2}n^{2/3}f_i$, $\lp(H_{n, p_i})\le f_i^4n^{1/3}$ and
		\item[(II)] $\lp(G_{n, p_{i+1}}[V(K_n)\setminus V(H_{n, p_i})])\le \frac{n^{1/3}}{\sqrt{f_i}}$.
	\end{itemize}
 We denote by $i^*$ the smallest half-integer such that $G_{n, p_j}$ is well-behaved for all $i^*\le j \le t-\frac{1}{2}$ (or $i^*=t$ if $G_{n, p_{t-\frac{1}{2}}}$ is not well behaved).
\end{defi}

\begin{lemma}[Lemma 2 of \cite{addario2009critical}]\label{lemma2_abr}
	Let $G\subseteq G'$ be graphs and consider some connected components $H\subseteq H'$ of $G$ and $G'$ respectively. Then $$\diam(H')\le \diam(H)+2\lp(G'[V(H')\setminus V(H)])+2.$$
\end{lemma}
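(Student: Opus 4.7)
The plan is to argue by a simple case analysis on where two diameter-realizing vertices of $H'$ sit relative to $V(H)$, combined with a short ``return-to-$H$'' observation that replaces the role of connectedness of $H \cap $ anything.

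First I would set up notation: let $W := V(H') \setminus V(H)$ and $K := G'[W]$. The key preliminary fact to establish is that every $w \in W$ has $d_{H'}(w, V(H)) \le \lp(K) + 1$. To see this, let $C_w$ be the connected component of $w$ in $K$. Since $H'$ is connected and $V(H)$ is nonempty with $V(H) \subseteq V(H')$, there must exist at least one edge of $H'$ joining $V(C_w)$ to $V(H)$, for otherwise $V(C_w)$ would be separated from $V(H)$ inside $H'$. Walking inside $C_w$ to the endpoint of such a crossing edge and then taking that edge gives a path in $H'$ from $w$ to $V(H)$ of length at most $\lp(C_w) + 1 \le \lp(K) + 1$.

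Next, pick $u, v \in V(H')$ realizing $\diam(H')$ and split into three cases. If $u, v \in V(H)$, then $d_{H'}(u,v) \le d_H(u,v) \le \diam(H)$, using $H \subseteq H'$. If exactly one of them, say $v$, lies in $W$, concatenate a shortest $H'$-path from $v$ to some $y \in V(H)$ with a shortest $H$-path from $y$ to $u$ to get $d_{H'}(u,v) \le \diam(H) + \lp(K) + 1$. Finally, if both $u, v \in W$, apply the preliminary fact to both endpoints, obtaining $y_u, y_v \in V(H)$ each at $H'$-distance at most $\lp(K)+1$, and bridge them via $d_H(y_u, y_v) \le \diam(H)$, yielding $d_{H'}(u,v) \le \diam(H) + 2\lp(K) + 2$. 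This last case dominates and gives the stated bound.

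The only subtle point I foresee is the justification of the ``return-to-$H$'' step: one must use the connectedness of $H'$ together with $H \subseteq H'$ to produce a \emph{single} crossing edge from $V(C_w)$ into $V(H)$, rather than an arbitrary $H'$-path, which is what keeps the additive overhead linear in $\lp(K)$ rather than in, say, $\diam(H')$ itself (which would be circular). Everything else is a straightforward triangle-inequality concatenation inside the spanning subgraph $H' \subseteq G'$, with no probabilistic content entering.
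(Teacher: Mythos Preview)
Your proof is correct. The paper does not supply its own proof of this lemma; it simply quotes it as Lemma~2 of \cite{addario2009critical}, so there is nothing to compare against beyond checking your argument stands on its own. Your ``return-to-$H$'' step is justified exactly as you say: since $C_w$ is a connected component of $G'[W]$, any edge of $H'=G'[V(H')]$ leaving $V(C_w)$ must land in $V(H')\setminus W=V(H)$, and at least one such edge exists by connectedness of $H'$. The bound $d_{C_w}(w,x)\le\diam(C_w)\le\lp(C_w)\le\lp(K)$ then gives the claimed $\lp(K)+1$, and the three-case triangle-inequality argument finishes it. This is the standard proof.
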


\begin{lemma}[Lemma 3  of \cite{addario2009critical}]\label{lemma3_abr}
$$\P(G_{n, p_j}\text{ is not well-behaved})\le 6e^{-\sqrt{f_j/8}}.$$
\end{lemma}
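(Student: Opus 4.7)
The plan is to decompose the bad event into its three defining parts---the giant is too small, the giant has too long a path, and there is a long path outside the giant---and control each via an exploration-process argument, then combine through a union bound at total cost $6 e^{-\sqrt{f_j/8}}$. All three estimates exploit the breadth-first search on $G_{n,p_j}$ in the barely-supercritical window $p_j = \frac{1}{n} + \frac{f_j}{n^{4/3}}$: active-vertex counts during BFS behave like a random walk with drift $\approx f_j/n^{1/3}$ for $o(n)$ steps, and its fluctuations can be controlled by Azuma-type martingale concentration.

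For $|H_{n,p_j}| \ge \tfrac{3}{2} f_j n^{2/3}$ I would start BFS from a typical vertex, dominate the active-count walk from below by a random walk with drift $\tfrac{1}{2} f_j/n^{1/3}$ (absorbing the negligible correction coming from the $O(f_j n^{2/3})$ already-explored vertices), and conclude that the walk stays positive for at least $\tfrac{3}{2} f_j n^{2/3}$ steps with probability $1 - e^{-\Omega(\sqrt{f_j})}$. Hence the explored component has at least $\tfrac{3}{2} f_j n^{2/3}$ vertices with the required tail.

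For $\lp(H_{n,p_j}) \le f_j^4 n^{1/3}$ I would bound the longest path by the size of the $2$-core plus twice the maximal depth of a tree hanging off it. The surplus $|E(H_{n,p_j})|-|V(H_{n,p_j})|+1$ arising from excess back-edges recorded during BFS is $O(f_j^3)$ with exponential tail in $\sqrt{f_j}$ (a classical estimate in the scaling window), which bounds the $2$-core size. The tree-mantle depths are dominated by a Galton-Watson tree with offspring mean $np_j = 1 + O(f_j/n^{1/3})$; a ballot-problem / hitting-time estimate yields $O(n^{1/3})$ maximum depth uniformly over mantle roots with the required tail, producing the bound $|V(\text{core})| + O(n^{1/3}) = O(f_j^3 n^{1/3}) \le f_j^4 n^{1/3}$ for large $n$.

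For $\lp(G_{n, p_{j+1}}[V(K_n)\setminus V(H_{n,p_j})]) \le n^{1/3}/\sqrt{f_j}$, I would condition on $V(H_{n,p_j})$ and view the residual as Erd\H{o}s-R\'enyi on $n - |H_{n,p_j}|$ vertices at density $p_{j+1}$. A direct computation using $|H_{n,p_j}| = (2+o(1)) f_j n^{2/3}$ shows its mean degree equals $1 - \tfrac{3 f_j}{4 n^{1/3}} + o(f_j/n^{1/3})$, i.e., subcritical with gap $\epsilon = \Theta(f_j/n^{1/3})$. Standard subcritical Erd\H{o}s-R\'enyi estimates then give that the longest path in every component is $O(1/\epsilon) = O(n^{1/3}/f_j)$, comfortably below $n^{1/3}/\sqrt{f_j}$, with exponential tail in $\sqrt{f_j}$. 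The main obstacle is matching universal constants through the three exploration-walk concentration arguments so that each tail rate comes out exactly as $e^{-\sqrt{f_j/8}}$: the qualitative statements are classical near-critical Erd\H{o}s-R\'enyi facts, but the numerical factor $1/\sqrt{8}$ demands careful bookkeeping of the Azuma constant and of the corrections induced by conditioning on the giant.
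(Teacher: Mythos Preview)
The paper does not give its own proof of this lemma: it simply quotes it as Lemma~3 of \cite{addario2009critical} and remarks that ``most of the computations of \cite{addario2009critical} is to prove Lemma~3.'' So there is no proof in the present paper to compare against; the relevant comparison is with the argument in \cite{addario2009critical}.

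Your outline follows the correct high-level structure---a three-part union bound analysed via exploration processes in the barely supercritical window---and this is indeed the spirit of the proof in \cite{addario2009critical}. However, there is a genuine gap in your treatment of condition~(I), the bound $\lp(H_{n,p_j})\le f_j^4 n^{1/3}$. You write that the surplus $|E(H)|-|V(H)|+1=O(f_j^3)$ ``bounds the $2$-core size,'' but this is false in general: a single cycle of length $m$ has surplus $1$ and $2$-core of size $m$. In the barely supercritical regime the $2$-core of the giant has order $f_j^2 n^{1/3}$ vertices, not $O(f_j^3)$. What the surplus controls is the \emph{kernel} (the multigraph obtained by suppressing the degree-$2$ vertices of the $2$-core), which has $O(f_j^3)$ vertices and edges; to bound the longest path one must additionally control the maximal length of a kernel-edge subdivision, which is a separate estimate. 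Your decomposition ``$2$-core size plus twice the maximal tree depth'' is a valid upper bound on $\lp$, but you have not actually bounded the first term.

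A smaller issue: in part~(II) you treat $G_{n,p_{j+1}}[V(K_n)\setminus V(H_{n,p_j})]$ as an Erd\H{o}s--R\'enyi graph on $n-|H_{n,p_j}|$ vertices at density $p_{j+1}$. This is not literally true, since conditioning on which vertices lie outside the giant of $G_{n,p_j}$ biases the $p_j$-edges among them. One does have stochastic domination by the unconditioned Erd\H{o}s--R\'enyi graph, which suffices for upper bounds on path lengths, but this should be stated. Finally, as you yourself note, recovering the specific rate $e^{-\sqrt{f_j/8}}$ requires following the detailed bookkeeping of \cite{addario2009critical} rather than the soft arguments sketched here.
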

Most of the computations of \cite{addario2009critical} is to prove Lemma 3.

\begin{proof}[Proof of Proposition \ref{prop:to_supcrit}.] In the proof of \cite{addario2009critical}, they compared $\diam(\mst_n\cap H_{n, p_i})$ for $i=0, 1, \ldots, t$. We are modifying their reasoning to $\diam(T_{n, p_i})=\diam\left(\langle V(H_{n, p_i})\rangle_{E(\wst_{n}^{\beta_n})}\right)$ for $i=0, \frac{1}{2}, \ldots, t$ on $A^{\mathrm{incr}}_n$.

\emph{1) On the event $A^{\mathrm{incr}}_n$, we have that ${D_t-D_{i^*}\le n^{1/3}}$ for large enough $n$'s.}
	
On $\{i\ge i^*\}\cap A^{\mathrm{incr}}_n$, the graphs $G:=G_{n, p_{i+\frac{1}{2}}}[V(T_{n, p_{i}})]$, $H=T_{n, p_i}$, $G'=G_{n, p_{i+1}}[V(T_{n, p_{i+\frac{1}{2}}})]$ and $H'=T_{n, p_{i+\frac{1}{2}}}$ satisfy the conditions of Lemma \ref{lemma2_abr}, $V(H_{n, p_{i}})\subseteq V(H)$ and $H'\subseteq H_{n, p_{i+1}}$, so 
\begin{gather}\label{formula:difference_diams}
    D_{i+\frac{1}{2}}-D_{i}\le 2\lp(G_{n, p_{i+1}}[V(H_{n, p_{i+1}})\setminus V(H_{n, p_i})])+2\le 2\frac{n^{1/3}}{\sqrt{f_i}}+2.
\end{gather}
	
Since $t=O(\log n)$ and $f_0\gg 1$, on $A^{\mathrm{incr}}_n$, for large enough $n$, we have that
	$$D_t-D_{i^ *}\le 2\sum_{i=i^*,i^*+\frac{1}{2},\ldots , t-\frac{1}{2}}\left(2\frac{n^{1/3}}{\sqrt{f_i}}+2\right)\le O(\log n)+2\frac{n^{1/3}}{\sqrt{f_{0}}}\sum_{i=0}^{\infty}\left(\frac{4}{5}\right)^{i/2}\stackrel{\text{large $n$}}{\le} n^{1/3}.$$ 
	
	\emph{2) It is also true that $\E[D_{i^*}|A^{\mathrm{incr}}_n]\le 2f_0^{4}n^{1/3}$ for large enough $n$.}
	
 By Lemma \ref{lemma3_abr}, with large enough $f_0^{(n)}$, and since $\lim_{x\rightarrow\infty}e^{-\sqrt{x}}x^5=0$,
\begin{align}\label{formula_istar}
     \P\left(i^*=j+1/2\right)\le \P\left(G_{n, p_j}\text{ is not well-behaved}\right)\le 6e^{-\sqrt{f_j/8}}\le \left(\frac{5}{4}\right)^{5/2}f_{j}^{-5}=f_{j+\frac{1}{2}}^{-5}.
 \end{align}
Moreover, $f_t\sim\frac{n^{1/3}}{\log n}$, so
\begin{align}\label{formula_istar2}
         &\P(i^*=t)\le 6e^{-\sqrt{f_{t-1/2}/8}}=O\left(\frac{1}{n}\right).
\end{align}
For large enough $n$, we have that $\P(A^{\mathrm{incr}}_n)\ge \frac{1}{2}$, hence
	\begin{align*}
		\E[D_{i^*}|A^{\mathrm{incr}}_n]&\le\E[\lp(H_{n, p_{i^*}})|A^{\mathrm{incr}}_n]\le f_0^4n^{1/3}+\sum_{i=\frac{1}{2},\ldots, , t-\frac{1}{2}}f_i^4n^{1/3}\P(i^*=i|A^{\mathrm{incr}}_n)+n\P(i^*=t|A^{\mathrm{incr}}_n)\\
		&\stackrel{(\ref{formula_istar}), (\ref{formula_istar2})}{\le} f_0^4n^{1/3}+\sum_{i=\frac{1}{2},1, \ldots}f_{i}^4n^{1/3}\frac{1}{f_{i}^5}\frac{1}{\P(A^{\mathrm{incr}}_n)}+n\frac{1}{n}\frac{1}{\P(A^{\mathrm{incr}}_n)}\\
        &\stackrel{\text{large }n}{\le} f_0^4n^{1/3}+2n^{1/3}\frac{1}{f_0}\sum_{i=0}^{\infty} \left(\frac{5}{4}\right)^{i/2}+2\stackrel{\text{large }n}{\le}  2f_0^4n^{1/3}.
	\end{align*}

Combining 1) and 2) gives us the statement for large enough $n$'s.
\end{proof}

In Lemma 4 of \cite{addario2009critical}, for $D_{t}^{(\mathsf{MST})}:=\mst_n\cap H_{n, p_t}$, it is shown that  $$\E\left[\diam(\mst_n)\right]-\E\left[D_{t}^{(\mathsf{MST})}\right]=O\left(n^{1/6}(\log n)^{5/2}\right).$$
The proof is based on that Prim's invasion algorithm started at $s\notin V(H_{n, t})$ reaches $H_{n, t}$ fast, since in any step, conditioning on the already built subtree, any vertex outside the subtree is added with the same probability. This holds also for the Aldous-Broder algorithm, so we can also prove the analugous statement for $\wst_n^{\beta_n}$.

\begin{lemma}[The analogue of Lemma 4 of \cite{addario2009critical}]\label{lemma:from_supcrit_to_whole} 
	$$\E\left[\diam(\wst_{n}^{\beta_n})\Given A^{\mathrm{incr}}_n\right]-\E\left[D_t\Given A^{\mathrm{incr}}_n\right]=O\left(n^{1/6}(\log n)^{5/2}\right).$$
\end{lemma}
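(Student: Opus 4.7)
The plan is to follow the strategy of Lemma~4 of \cite{addario2009critical}, replacing Prim's invasion with the Aldous-Broder algorithm. Since $T_{n,p_t}$ is a subtree of $\wst_n^{\beta_n}$ containing $V(H_{n,p_t})$, the triangle inequality in $\wst_n^{\beta_n}$ gives
\[
\diam\!\bigl(\wst_n^{\beta_n}\bigr) - D_t \;\le\; 2 \max_{v \in V(K_n)} d_{\wst_n^{\beta_n}}\!\bigl(v, V(H_{n,p_t})\bigr),
\]
where $d_{\wst_n^{\beta_n}}$ is the graph distance in $\wst_n^{\beta_n}$. Because $0 \le \diam(\wst_n^{\beta_n}) - D_t \le n$ and $\P(A^{\mathrm{incr}}_n) \to 1$, the conditional expectation on $A^{\mathrm{incr}}_n$ differs from the unconditional one by $o(1)$, so it suffices to bound the unconditional expectation of the right-hand side by $O(n^{1/6}(\log n)^{5/2})$.

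For a fixed $v \in V(K_n)$, I generate $\wst_n^{\beta_n}$ by the Aldous-Broder algorithm started at $v$, producing the vertex order $v_0 = v, v_1, \ldots, v_{n-1}$. Since each edge $f_k$ joins $v_k$ to some $v_j$ with $j < k$, a direct induction gives $d_{\wst_n^{\beta_n}}(v, v_k) \le k$, hence $d_{\wst_n^{\beta_n}}(v, V(H_{n,p_t})) \le \tau_v := \min\{k : v_k \in V(H_{n,p_t})\}$. The symmetry property the author alludes to with ``this holds also for the Aldous-Broder algorithm'' is that, by the exchangeability of the i.i.d.~$\unif[0,1]$ labels under any permutation of $V(K_n)$ fixing $V_k$, the conditional distribution of $v_{k+1}$ given the AB-subtree built up to step $k$ is uniform on $V(K_n) \setminus V_k$. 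Combined with well-behavedness condition (I) for $G_{n,p_t}$, which by Lemma~\ref{lemma3_abr} ensures $|V(H_{n,p_t})| = \Omega(n/\log n)$ with probability tending to $1$ (since $f_t \sim n^{1/3}/\log n$), this symmetry dominates $\tau_v$ by a hitting time with exponential tails of rate $\Omega(1/\log n)$; taking $k = n^{1/6}(\log n)^{5/2}$ makes $\P(\tau_v > k) = o(n^{-2})$, and a union bound over $v \in V(K_n)$ together with the trivial bound $\tau_v \le n$ on the complement of the good event gives $\E[\max_v \tau_v] = O(n^{1/6}(\log n)^{5/2})$.

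The main technical obstacle is that $V(H_{n,p_t})$ and the Aldous-Broder ordering are both functions of the same labels $\{U_e\}$, hence not independent, so the uniform-next-vertex symmetry must be invoked carefully together with the conditioning on $V(H_{n,p_t})$. The argument goes through once one verifies that, given the AB history up to step $k$ and $V(H_{n,p_t})$, the conditional probability of $v_{k+1} \in V(H_{n,p_t}) \setminus V_k$ is at least $|V(H_{n,p_t}) \setminus V_k|/|V(K_n) \setminus V_k|$; this is the direct analogue of the step used in \cite{addario2009critical} for Prim's invasion, and should follow from a refined exchangeability argument that partitions $V(K_n) \setminus V_k$ into its $V(H_{n,p_t})$-part and its complement and then handles the conditional law of the labels within each piece. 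Apart from this subtlety, the proof parallels \cite{addario2009critical} essentially verbatim.
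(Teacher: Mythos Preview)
Your overall plan is right and close to the paper's, but the step you flag as a ``technical obstacle'' is actually a genuine gap, and the inequality you propose to fill it is false in the regime $\beta_n\gg n^{4/3}\log n$.

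Concretely, you assert (or hope) that
\[
\P\bigl(v_{k+1}\in V(H_{n,p_t})\;\big|\;V_k,F_k,V(H_{n,p_t})\bigr)\ \ge\ \frac{|V(H_{n,p_t})\setminus V_k|}{|V(K_n)\setminus V_k|}.
\]
This fails whenever $V_k$ does \emph{not} exhaust the $G_{n,p_t}$-component of some vertex in $V_k$ (and $V_k\cap V(H_{n,p_t})=\emptyset$). In that case there are edges from $V_k$ into its own component with labels $\le p_t$, while every edge from $V_k$ into $H_{n,p_t}$ is a cross-component edge with label $>p_t$. Since $\beta_n\gg n^{4/3}\log n$, the conductance ratio between a typical within-component boundary edge and any edge into $H_{n,p_t}$ is at least $e^{\beta_n\cdot\Theta(1/n)}\ge e^{\Omega(n^{1/3})}$, so the Aldous--Broder walk overwhelmingly discovers the rest of the current (small) component before it discovers any vertex of $H_{n,p_t}$. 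Thus the conditional probability above is essentially $0$, not at least $|H|/n$; your ``refined exchangeability'' cannot rescue this because, under the conditioning on $V(H_{n,p_t})$, boundary edges from $V_k$ into $H$ and into $V\setminus(V_k\cup H)$ are \emph{not} identically distributed.

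The paper sidesteps exactly this issue by grouping building steps into component jumps. Starting from $b_0$ in a non-giant component $C_0$, one lets $C_i$ be the union of $C_{i-1}$ and the $G_{n,p_t}$-component of the first vertex $b_i$ outside $C_{i-1}$ reached by the walk. Because each $C_{i-1}$ is a union of \emph{full} components, every edge of $\partial C_{i-1}$ has label $>p_t$, and these labels are conditionally i.i.d.\ $\unif(p_t,1]$ given the component structure; hence $b_i$ is uniform on $V(K_n)\setminus C_{i-1}$, and the number $r$ of jumps until hitting $H_{n,p_t}$ is dominated by a geometric variable with parameter $\ge\frac{3}{2\log n}$. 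The $\wst$-distance from $b_0$ to $H_{n,p_t}$ is then at most $r\cdot\bigl(\lp(G_{n,p_t}[V\setminus H_{n,p_t}])+1\bigr)\le r\cdot\bigl(n^{1/6}\sqrt{\log n}+1\bigr)$, and taking $r=\Theta((\log n)^2)$ plus a union bound over starting components gives the $O(n^{1/6}(\log n)^{5/2})$ bound. In short, the uniformity you need holds at the level of component exits, not at the level of individual Aldous--Broder building steps; replacing $\tau_v$ by the component-jump count is the missing idea.
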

\begin{proof}
    Consider $E_n:=\{G_{n, p_t}\text{ is well-behaved}\}$, which can be rewritten as
    $$E_n=\left\{|H_{n, p_t}|>\frac{3}{2}\frac{n}{\log n}\text{ and } \lp(G_{n, p_{t}}[V(K_n)\setminus V(H_{n, p_t})])\le n^{1/6}\sqrt{\log n}\right\},$$
    where $\P(E_n^c)=\P(G_{n, p_t}\text{ is not well-behaved}) \le 6e^{-\sqrt{f_t/8}}=O(1/n)$ by Lemma \ref{lemma3_abr} with $f_t\sim\frac{n^{1/3}}{\log n}$.
	
	Consider a connected component $C_0$ of $G_{n, p_t}\setminus V(H_{n, p_t})$, $b_0\in C_0$, and $r\in \mathbb{N}$. For $1\le i \le r$, let $(a_{i-1}, b_i)$ be the first edge of $\partial C_{i-1}$ crossed by the random walk on our electric network started at $b_{i-1}$, where $a_{i-1}\in C_{i-1}$ and $b_i\notin C_{i-1}$. We define $C_i$ as the union of $C_{i-1}$ and the connected component of $b_i$ in $G_{n, p_t}$. If $C_{r}\cap H_{n, p_t}\ne \emptyset$, then $b_0$ can be connected to $H_{n, p_t}$ with a path of length $\le r(n^{1/6}\sqrt{\log n}+1)$.
 
    Under conditioning on $(H_{n, p_t}, C_{i-1}, a_{i-1})$, the edge $(a_{i-1}, b_i)$ has distribution $ \unif\{(a_{i-1}, v):v\notin C_{i-1}\}$, so for any labeling $\in A^{\mathrm{incr}}_n\cap E_n$
	$$\P\left(C_r\cap H_{n, p_t}=\emptyset\Given A^{\mathrm{incr}}_n\cap E_n\right)\le \left(1-\frac{3n/(2\log n)}{n}\right)^r \le \exp\left(-\frac{3r}{2\log n}\right).$$
	For $r_n:=4(\log n)^2/3$, using the union bound,
	\begin{align*}
		&\P\left(\diam(\wst_{n}^{\beta_n})-D_t\ge r_n(n^{1/6}\sqrt{\log n}+1)\Given A^{\mathrm{incr}}_n\cap E_n\right)\\
        &\hspace{3cm}\le \P\left(
		\exists C_0: C_{r_n}\cap H_{n, p_t}=\emptyset\Lvert A^{\mathrm{incr}}_n\cap E_n\right)\le n \exp\left(-\frac{3r_n}{2\log n}\right)=n e^{-2\log n}=\frac{1}{n},
	\end{align*}
	and since $r_n(n^{1/6}\sqrt{\log n}+1)=O(n^{1/6}(\log n)^{5/2})$, 
	\begin{align*}  
        \E\left[\diam(\wst_{n}^{\beta_n})-D_t\Lvert A^{\mathrm{incr}}_n\right]&\le \E\left[\diam(\wst_{n}^{\beta_n})-D_t\Lvert A^{\mathrm{incr}}_n\cap E_n\right]+n\P(E_n^c|A^{\mathrm{incr}}_n)\\
        &=O(n^{1/6}(\log n)^{5/2}).
	\end{align*}
\end{proof}

\begin{proof}[Proof of Theorem \ref{thm:has_diam_mst}] By Theorem 1.1 of \cite{michaeli2021diameter}, for any $\eps>0$, there exist $0<\tilde{c}(\eps)<\tilde{C}(\eps)$ such that for any $n$,
\begin{gather}\label{formula:ust_typical_lower}
\P\left(\tilde{c}(\eps)n^{1/2}<\diam(\ust_n)<\tilde{C}(\eps)n^{1/2}\right)>1-\eps.
\end{gather}

Suppose that the lower bound on the typical diameter of $\wst_n^{\beta_n}$ does not hold. Then there exists $\eps>0$ such that if $g_n$ tends to 0 slowly enough, then $\P\left(\diam(\wst_n^{\beta_n})\le g_n n^{1/3}\right)>\eps$ holds for infinitely many $n$'s. Hence,  and since $\tilde{c}(\eps/2)$ is a constant,  there exists $1\ll f_0\ll n^{1/3}$ such that \begin{gather}\label{formula:lower_indir}
\P\left(\diam(\wst_n^{\beta_n})\le \tilde{c}\left(\frac{\eps}{2}\right)\frac{\sqrt{\log f_0}}{2f_0}n^{1/3}\right)>\eps
\end{gather} for infinitely many $n$'s.

Denoting by $C_n^{\mathrm{contain}}$ the connected component  of $G_{n, 1/n-f_0/n^{4/3}}$ containing the subgraph $H_n:=H_{n, 1/n-2f_0/n^{4/3}}$, we show that $\P(C_n^{\mathrm{contain}}\text{ is a tree})\rightarrow1$. To do that, we consider $$E_n:=\left\{\frac{n^{2/3}\log f_0}{4f_0^2}\le |V(H_{n})|\le 2\frac{n^{2/3}\log f_0}{f_0^2}\text{ and } H_n\text{ is a tree}\right\}.$$
By Theorem 5.6 of \cite{janson2011random}, $\P(E_n)\rightarrow 1$ as $n\rightarrow \infty$.

On $E_n$, using Lemma \ref{lemma:conditioning}, we can exclude the appearance of $e\in C_n^{\mathrm{contain}}\setminus E(\mst_n)$, i.e.,~the edges of labels $\frac{1}{n}-\frac{2f_0}{n^{4/3}}\le U_e\le \frac{1}{n}-\frac{f_0}{n^{4/3}}$:
\begin{align*}
\P\left(C_n^{\mathrm{contain}}\text{ is not a tree}\Given E_n\right)&\le \E\left[\sum_{e\in C_n^{\mathrm{contain}}\setminus E(\mst_n)}\P\left(\frac{1}{n}-\frac{2f_0}{n^{4/3}}\le U_e\le \frac{1}{n}-\frac{f_0}{n^{4/3}}\right)\Given E_n\right]\\
&\le\left(2\frac{n^{2/3}\log f_0}{f_0^2}\right)^2\Theta\left(\frac{f_0}{n^{4/3}}\right)=\Theta\left(\frac{\log^2 f_0}{f_0^3}\right)\rightarrow 0,
\end{align*}
obtaining also that $\P\left(C_n^{\mathrm{contain}} \text{ is a tree}\right)\rightarrow 1$.

Then, we get that  
$\P(T_{n, 1/n-2f_0/n^{4/3}}=H_n)\rightarrow1$, too. Indeed, applying Lemma \ref{lemma:H_comps} with $i=-1$, we have that $\P\left(T_{n, 1/n-2f_0/n^{4/3}}\subseteq C_n^{\mathrm{contain}}\right)\rightarrow 1$, and if $C_n^{\mathrm{contain}}$ is a tree, then the only way making $T_{1/n-2f_0/n^{4/3}}$ connected inside $C_n^{\mathrm{contain}}$ is to agree with $H_n$.

Under conditioning on $E_n$, $H_n$ is distributed as the $\ust$ on $|V(H_n)|\ge \frac{n^{2/3}\log f_0}{4f_0^2}$ vertices, so by the union bound, we have that
\begin{gather*}
\P\left(\diam\left(T_{n, \frac{1}{n}-\frac{2f_0}{n^{4/3}}}\right)\le \tilde{c}\left(\frac{\eps}{2}\right)\frac{n^{1/3}\sqrt{\log f_0}}{2f_0}\right)\stackrel{(\ref{formula:ust_typical_lower})}{\le} \frac{\eps}{2}+\P\left(E_n^c\right)+\P\left(T_{n, 1/n-2f_0/n^{4/3}}\ne H_n\right)\rightarrow \frac{\eps}{2},
\end{gather*} contradicting (\ref{formula:lower_indir}).

For the upper bound of the typical diameter, suppose that there exists an $\eps>0$ for which the upper bound does not hold. For this $\eps$, $\exists f_0\gg 1$ such that $$\P\left(\diam(\wst_n^{\beta_n})\ge \frac{8f_0^4}{\eps}n^{1/3}\right)>\eps$$ for infinitely many $n$'s. But for large $n$'s, combining Proposition \ref{prop:to_supcrit} and Lemma \ref{lemma:from_supcrit_to_whole}, we have that
$\E\left[\wst_n^{\beta_n}\Given A^{\mathrm{incr}}_{n, f_0}\right]\le 4f_0^4 n^{1/3}$, which is a contradiction. Indeed, by Markov's inequality,
$$\P\left(\wst_n^{\beta_n}\ge\frac{8f_0^4}{\eps}n^{1/3}\right)\le \P\left(\wst_n^{\beta_n}\ge\frac{8f_0^4n^{1/3}}{\eps}\Given A^{\mathrm{incr}}_n\right)+\P(\overline{A^{\mathrm{incr}}_n}) \stackrel{\text{large $n$}}{\le} 4f_0^4n^{1/3}\frac{\eps}{8f_0^4n^{1/3}}+\frac{\eps}{2}=\eps.$$
\end{proof}

\begin{remark}[The difference between $\mst_n$ and $\wst_n^{\beta_n}$]
Although $\E[\mst_n]=\Theta(n^{1/3})$ is proven in \cite{addario2009critical}, we have only shown $\E\left[\wst_n^{\beta_n}\Given A^{\mathrm{incr}}_{n, f_0}\right]\le 4f_0^4 n^{1/3}$ with some $\P(A^{\mathrm{incr}}_{n, f_0})\rightarrow 1$ for any $f_0\gg 1$. If $f_0\rightarrow \infty$ slowly, then the convergence $\P(A^{\mathrm{incr}}_{n, f_0})\rightarrow 1$ can be slow too, that is why $\E[\wst_n^{\beta_n}]$ is not handled. 

In \cite{addario2009critical}, it was not discussed in detail how to overcome this issue. Our model should be studied more carefully in this regard, since the layers in Figure \ref{fig:wst_mst_layers} for $\wst_n^{\beta_n}$ are less ordered than for $\mst_n$. Therefore, we elaborate here how to deal with this technical difficulty.

For $\mst_n$, a possibility to omit the conditioning on $A^{\mathrm{incr}}_{n}$ is to consider a monotone sequence $\{J_{n, p_{i}}\}_{i}$ of graphs even if sometimes $H_{n, p_i}\not\subseteq H_{n, p_{i+1}}$. Let $J_{n, p_{t}}:=H_{n, p_{t}}$ be the largest  connected component of $G_{n, p_{t}}$. If we have already defined $J_{n, p_{i+1}}$, let $J_{n, p_{i}}$ be the  largest connected component of $G_{n, p_{i}}[V(J_{n, p_{i+1}})]$. When we study the evolution of $\diam\left(J_{n, p}\cap \mst_n\right)$ instead of $\diam\left(H_{n, p}\cap \mst_n\right)$, the main difference is that in (\ref{formula:difference_diams}) one has to estimate $\lp(G_{n, p_{i+1}}[V(J_{n, p_{i+1}})\setminus V(J_{n, p_i})])$ instead of $\lp(G_{n, p_{i+1}}[V(H_{n, p_{i+1}})\setminus V(H_{n, p_i})])$. This is not a problem, since if $J_{n, p_i}\ne H_{n, p_i}$, then $H_{n, p_i}\cap J_{n, p_{i+1}}=\emptyset$, so we even have that
$$\lp(G_{n, p_{i+1}}[V(J_{n, p_{i+1}})])\le \lp(G_{n, p_{i+1}}[V(K_n)\setminus V(H_{n, p_{i}})])\le \frac{n^{1/3}}{\sqrt{f_i}}.$$

Unfortunately, this idea cannot be generalized to $\wst_n^{\beta_n}$: if we define $J'_{n, p_{i}}$ as the largest connected component of $G_{n, p_{i}}[V(J'_{n, p_{i+{1/2}}})]$, then we cannot estimate $\lp(G_{n, p_{i+1}}[V(J'_{n, p_{i+1}})\setminus V(J'_{n, p_i})])$ nicely: it could happen that $J'_{n, p_i}\ne H_{n, p_i}$ and $H_{n, p_i}\subseteq J'_{n, p_{i+1}}\setminus J'_{n, p_{i+\frac{1}{2}}}$. Then a long path of $H_{n, p_i}$ could appear in $G_{n, p_{i+1}}[V(J'_{n, p_{i+1}})\setminus V(J'_{n, p_i})]$. That is why we need the conditioning on $A^{\mathrm{incr}}_{n}$.
\end{remark}

\subsection{A related work: fixed, heavy tailed conductances} \label{subsec:heavy_tail}

In Subsection 6.1 of \cite{makowiec2023diameter}, the random electric network with $\c_{\mathrm{fix}, n, \gamma}:=\left\{\exp\left(U_e^{-\gamma}\right)\right\}_{e\in E(K_n)}$ is studied. In this subsection, we show that there are similar phases for $\c_{\mathrm{fix}, n, \gamma}$ and for $\c_{\mathrm{run}, n,  \gamma+1}:=\left\{\exp\left(-n^{\gamma+1}U_e\right)\right\}_{e\in E(K_n)}$, simply by comparing the ratio of conductances in these two electric networks.

Indeed, we are checking that, for $U_f, U_e\le \frac{\log n}{n}$, it is easier to distinguish the edges $e$ and $f$ in $\c_{\mathrm{fix}, n, \gamma}$ than in $\c_{\mathrm{run}, n,  \gamma+1-a_n}$ for some positive $a_n=o(1)$, so
our proofs using the distinguishability of the edges can be extended to the model of \cite{makowiec2023diameter}. These proofs are about the properties (algorithms, models, typical diameters) being the same for the $\wst$ and the $\mst$.

Moreover, for $\frac{2}{n}\le U_f<U_e$, the comparability of the edges $e$ and $f$ happens easier in $\c_{\mathrm{fix}, n, \gamma}$ then in $\c_{\mathrm{run}, n,  \gamma+1}$ if we choose the constants in the comparabilities appropriately. Hence, the examples of the edges where the algorithms/models differ in $\c_{\mathrm{run}, n, \gamma+1}$ gives examples also in $\c_{\mathrm{fix}, n, \gamma}$.

These phases are formalized in Theorem \ref{thm:fixed_distr}, which will be an easy consequence of Theorems \ref{thm:ab_vs_inv}, \ref{thm:wst_vs_mst}, \ref{thm:has_diam_mst} and the following lemma. 

\begin{lemma}\label{lemma:connection-gamma-alpha}
Consider $c_{\mathrm{fix}, n, \gamma}(e):=\exp\left(U_e^{-\gamma}\right)$ and $c_{\mathrm{run}, n,  \alpha}(e):=\exp\left(-n^{\alpha}U_e\right)$ for any $e\in E(K_n)$.
\begin{itemize}
    \item [a)] For any $\gamma>\alpha-1>0$, there exists an $N_0:=N_0(\gamma, \alpha)$ such that for any $n\ge N_0$ and $e, f\in E(K_n)$ with $0<U_{f}\le U_{e}\le \frac{2\log n}{n}$, we have that
    $$ 0\le \frac{c_{\mathrm{fix}, \gamma}(e)}{c_{\mathrm{fix}, \gamma}(f)}\le \frac{c_{\mathrm{run}, n,  \alpha}(e)}{c_{\mathrm{run}, n,  \alpha}(f)}\le 1.$$
    \item [b)] For any $\gamma>0$, there exists a $c_0:=c_0(\gamma)>0$ such that for any $n$ and $e, f\in E(K_n)$ with $\frac{2}{n}\le U_f\le U_e\le 1$, we have that
    $$0\le \left(\frac{c_{\mathrm{run}, n,  \gamma+1}(e)}{c_{\mathrm{run}, n,  \gamma+1}(f)}\right)^{c_0}\le \frac{c_{\mathrm{fix}, n, \gamma}(e)}{c_{\mathrm{fix}, n, \gamma}(f)}\le 1.$$
\end{itemize}
\end{lemma}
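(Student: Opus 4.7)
The plan is to reduce both inequalities to a one-variable calculus bound on the gap $U_f^{-\gamma}-U_e^{-\gamma}$, since after taking logarithms the claim becomes
\[
 n^{\alpha}(U_e-U_f)\;\le\; U_f^{-\gamma}-U_e^{-\gamma}\;\le\; c_0^{-1} n^{\gamma+1}(U_e-U_f)
\]
for part a) and part b) respectively (together with the trivial monotonicity observation that both $c_{\mathrm{fix}}$ and $c_{\mathrm{run}}$ are decreasing in the label, which gives the two outer inequalities $\le 1$ directly).

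For the middle bound in part a), I would write
\[
 U_f^{-\gamma}-U_e^{-\gamma} \;=\; \gamma\int_{U_f}^{U_e} x^{-\gamma-1}\,dx \;\ge\; \gamma\,U_e^{-\gamma-1}(U_e-U_f),
\]
so it suffices to verify $\gamma U_e^{-\gamma-1}\ge n^{\alpha}$, i.e.\ $U_e\le \gamma^{1/(\gamma+1)}n^{-\alpha/(\gamma+1)}$. Under the hypothesis $U_e\le \tfrac{2\log n}{n}$ this reduces to $\tfrac{2\log n}{n}\le \gamma^{1/(\gamma+1)}n^{-\alpha/(\gamma+1)}$, equivalently $n^{1-\alpha/(\gamma+1)}\ge \tfrac{2\log n}{\gamma^{1/(\gamma+1)}}$. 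The exponent $1-\alpha/(\gamma+1)$ is strictly positive precisely because we assumed $\gamma>\alpha-1$, so this holds for all $n\ge N_0(\gamma,\alpha)$.

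For part b), I would use the opposite side of the same integral estimate:
\[
 U_f^{-\gamma}-U_e^{-\gamma} \;\le\; \gamma\,U_f^{-\gamma-1}(U_e-U_f).
\]
Thus it suffices to choose $c_0$ so that $\gamma U_f^{-\gamma-1}\le c_0^{-1} n^{\gamma+1}$, i.e.\ $U_f\ge (\gamma c_0)^{1/(\gamma+1)}/n$. Since by hypothesis $U_f\ge 2/n$, taking $c_0:=\gamma/2^{\gamma+1}$ (a constant depending only on $\gamma$) makes the inequality hold uniformly in $n$ and in $U_f,U_e$.

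There is no real obstacle: the only subtlety is in lining up the two hypotheses ($U_e\le 2\log n/n$ in a), $U_f\ge 2/n$ in b)) with the correct side of the mean-value-type estimate for $x\mapsto x^{-\gamma}$, so that we bound $x^{-\gamma-1}$ either at the right endpoint or at the left endpoint of the interval $[U_f,U_e]$. Once the correct endpoint is used, the condition $\gamma>\alpha-1$ in a) appears exactly as the condition that the exponent $1-\alpha/(\gamma+1)$ be positive, and the condition $\gamma>0$ in b) guarantees that $c_0>0$ can be chosen.
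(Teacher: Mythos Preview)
Your proof is correct and follows essentially the same mean-value-type argument as the paper's own proof, arriving at the identical constant $c_0=\gamma/2^{\gamma+1}$. One small slip: in your displayed reduction the factor for part~b) should be $c_0$ rather than $c_0^{-1}$ (so the derived threshold should read $U_f\ge(\gamma/c_0)^{1/(\gamma+1)}/n=2/n$), but this does not affect the validity of the argument.
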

\begin{proof} Let $g(x):=x^{-\gamma}$, then $|g'(x)|=|\gamma x^{-\gamma-1}|$. To check a), for $0<U_f\le U_e\le \frac{2 \log n}{n}$,
\begin{gather*}
    U_{f}^{-\gamma}-U_{e}^{-\gamma}\ge \gamma (2\log n)^{-\gamma-1}n^{\gamma+1}(U_{e}-U_{f})\stackrel{n\ge N_0}{\ge} n^{\alpha}(U_{e}-U_{f}).
\end{gather*}
We obtain the statement by plugging in this inequality to the decreasing $x\mapsto \exp(-x)$.

We prove b) similarly. For $x\ge\frac{2}{n}$, we have that $|g'(x)|\le g'\left(\frac{2}{n}\right)$, hence for $\frac{2}{n}\le U_f\le U_e$,
$$U_{f}^{-\gamma}-U_{e}^{-\gamma}\le \frac{\gamma}{2^{\gamma+1}}n^{\gamma+1}(U_{e}-U_{f}),$$
plugging in to $x\mapsto \exp(-x)$
gives us the result of the lemma with $c_0(\gamma):=\frac{\gamma}{2^{\gamma+1}}$.
\end{proof}

\begin{proof}[Proof of Theorem \ref{thm:fixed_distr}]
In the proof of Theorem \ref{thm:ab_vs_inv} a) in Subsection \ref{subsec:ideas}, in order to show that the Aldous-Broder and Prim's invasion algorithms typically do the same building steps on $\c_{\mathrm{run}, n, \alpha}$ for $\alpha>3$, we checked that $\frac{\c_{\mathrm{run}, n, \alpha}(h_j)}{\c_{\mathrm{run}, n, \alpha}(g_j)}\le n^{-11/2}\;\forall j\le n-1$ with probability $\rightarrow 1$. By Theorem 2.8.1 of \cite{durrett2010random}, with probability $\rightarrow 1$, the Erdős-Rényi graph $G_{n, 2\log n/n}$ is connected, implying that $\max_{e\in \mst_n }U_{e}\le 2\log n/n$. Combining these with Lemma \ref{lemma:connection-gamma-alpha} a), for any $\gamma>2$ we also have that $\frac{\c_{\mathrm{fix}, n, \gamma}(h_j)}{\c_{\mathrm{fix}, n, \gamma}(g_j)}\le n^{-11/2}\;\forall j\le n-1$, so the algorithms build the same edges in each step also in $\c_{\mathrm{fix}, n, \gamma}$ for $\gamma>2$.

On the other hand, if $\gamma<2$, then $\alpha=\gamma+1<3$. In the proof of Theorem \ref{thm:ab_vs_inv} b) in Subsection~\ref{subsec:ab_vs_invasion}, in order to show that the Aldous-Broder and Prim's invasion algorithms typically differ in some building steps on $\c_{\mathrm{run}, n, \alpha}$, we had to check that there are at least $k_n\rightarrow \infty$ invasion steps $g_j$ with $\frac{\c_{\mathrm{run}, n, \alpha}(h_j)}{\c_{\mathrm{run}, n, \alpha}(g_j)}>\eps_0$ for a fixed positive $\eps_0$ with probability $\rightarrow 1$. We did it for $\eps_0=1/2$, but this specific value is not important. Since the $k_n$ pairs of edges in the proof of Theorem \ref{thm:ab_vs_inv} b) have labels $\ge\frac{2}{n}$, by Lemma \ref{lemma:connection-gamma-alpha} b), these edges also satisfy $\frac{\c_{\mathrm{fix}, n, \gamma}(h_j)}{\c_{\mathrm{fix}, n, \gamma}(g_j)}> \eps_0$ for $\eps_0:=\frac{1}{2^{c_0}}$, giving us that the two algorithms typically differ also in $\c_{\mathrm{fix}, n, \gamma}$.

One can similarly argue that the phase transition for $\c_{\mathrm{run}, n, \alpha}$ around $\alpha \approx 2$ regarding the agreement of the models gives us a similar phase transition for $\c_{\mathrm{fix}, n, \gamma}$ around $\gamma\approx 1$. The only missing detail is that we can apply Lemma \ref{lemma:connection-gamma-alpha} b), or in other words, (\ref{formula:prob_wstnotmst}) about the divergence of the expected number of $\frac{1}{2}$-significant edges can be strengthened to hold even for the $1/2$-significant edges $e$ with $U_{e}\ge 2/n$. To check this, note that there exist $d_1, d_2\in (0, 1)$ such that the event $A_n$ defined as `$|V(H_{n, 2/n})|>d_1n$ and $|I_{n, 2/n}|>d_2n$’ satisfies $\P(A_n)\rightarrow 1$ by Theorem 2.3.2 and Subsection 2.8  of \cite{durrett2010random}. Since $|\{(h, i)\notin E(\mst_n):\;h\in V(H_{n, 2/n}), i\in I_{n, 2/n}\}|=\Theta(n^2)$ on $A_n$, running the summation on this edge set in (\ref{formula:prob_wstnotmst}) can modify the expectation only by a constant factor, and the label of these edges automatically satisfy $U_{(h, i)}>\max_{f\in \path(h, i)}U_f>2/n$.

In the proof of the growth of the typical $\diam(\wst(\c_{\mathrm{run}, n, \alpha}))$ for $\alpha>\frac{4}{3}$, the exact values of the conductances are used only in  Lemma \ref{lemma:H_comps}, where we need that the edges from $E(K_n)\setminus E(G_{n, p_{1/2}})$ and from $E(G_{n, p_0})$ are distinguishable, i.e.,~$\kappa \log n\le n^{\alpha}|p_{1/2}-p_0|=n^{\alpha}\Theta\left(\frac{1}{n^{4/3}}\right)$. To prove the similar statement for $\wst(\c_{\mathrm{fix}, n, \gamma})$, repeating the computation of the proof of Lemma \ref{lemma:connection-gamma-alpha} a), we need that $\kappa \log n \le |p_{1/2}^{-\gamma}-p_{0}^{-\gamma}|=\Theta\left(n^{\gamma+1}(p_{1/2}-p_0)\right)=\Theta\left(n^{\gamma+1-4/3}\right)$ which holds for $\gamma>\frac{1}{3}$.
\end{proof}

\begin{remark}[Conjecture: locally $\mst_n$, but globally of intermediate diameter growth]\label{rmrk:intermediate} For $\beta_n\gg n\log n$, we expect the local properties of $\wst_n^{\beta_n}$ to behave like the ones for $\mst_n$  by (\ref{formula:expected_diff}).

We think that this does not hold for the global geometry: for $0<\gamma <1/3$ and $\beta_n=n^{1+\gamma}$, we believe that Theorem \ref{thm:has_diam_mst} cannot be extended but typically $\diam(\mst_n)\ll \diam(\wst_n^{\beta_n})\ll \diam(\ust_n)$ holds for these parameters. However, the structure of our random electric network has to be understood carefully to determine the typical diameter. By Lemma \ref{lemma:connection-gamma-alpha}, this is in line with the conjecture of Remark 6.2 of \cite{makowiec2023diameter}: for any $0<\gamma<1/3$ and $c_{\mathrm{fix}, n, \gamma}(e)=\exp\left(U_e^{-\gamma}\right) \;\forall e\in E(K_n)$, they believe that the typical diameter of $\wst(c_{\mathrm{fix}, n, \gamma})$ grows like $n^{\delta}$ for some $\delta\in (1/3, 1/2)$.

It would also be nice to show that, for any $\delta>0$, there exists an $\eps>0$ such that if $\gamma\in (0, \eps)$ and $\beta_n=n^{1+\gamma}$, then the typical diameter of $\diam(\wst_n^{\beta_n})$ grows at least like $n^{1/2-\delta}$. This would result in an interesting model where the diameter typically grows almost at least as the diameter of $\ust_n$ but the model behaves like $\mst_n$ locally. This conjecture might be easier compared to determining the typical diameter for $\beta_n=n^{1+\gamma}$ $\forall \gamma\in (0, 1/3)$, since $H_{n, 1/n+\log n/\beta_n}$ has a small mixing time if $\gamma$ is close to 0: it is of order $n^{3\gamma+o(1)}$ by \cite{ding2011anatomy}, therefore there are similarities to the case of slowly growing $\beta_n$'s.
\end{remark}

\section{\texorpdfstring{$\wst^{\beta_n}_n$}{} for small \texorpdfstring{$\beta_n$}{}'s and balanced expander random networks}\label{sec:wst_vs_ust}
In Section \ref{sec:wst_vs_mst}, we determined the parameters for which $\wst_n^{\beta_n}$ behaves similarly to $\mst_n$ in different aspects:  regarding the generating algorithms, the models, the typical diameters and the expected total lengths. These similarities are based on the `relaxed greediness’ of the Aldous-Broder algorithm, making it possible to understand $\wst_n^{\beta_n}$ for fast growing $\beta_n$'s using the properties of the Erdős-Rényi graphs.

In Theorem \ref{thm:wst_vs_mst}, we could prove that the $\wst^{\beta_n}_n$ and $\mst_n$ contain the same edges for $\beta_n\gg n^2\log n$. Now, we show that the analogous theorem to couple the $\ust_n$ and the $\wst^{\beta_n}_n$'s for slowly growing $\beta_n$'s cannot hold unless $\beta_n=O(n^{-1/2})$. More precisely, if $\beta_n\gg n^{-1/2}$, then it is impossible to have $\P(\ust_n=\wst(\{\exp(-\beta_nU_e)\}_e)\given \{U_e\}_e)\rightarrow 1$ for most of the $\{U_e\}_e$'s, since for $L(T)=\sum_{e\in T}U_e$, we have that 
$\P\left(L(\ust_n)<\frac{n-1}{2}-\sqrt{n-1}\right)=\P\left(L(\ust_n)>\frac{n-1}{2}+\sqrt{n-1}\right)>C$ for a $C>0$ not depending on $n$, while
$$\frac{\P\left(L(\wst_n^{\beta_n})<\frac{n-1}{2}-\sqrt{n-1}\right)}{\P\left(L(\wst_n^{\beta_n})>\frac{n-1}{2}+\sqrt{n-1}\right)}\le e^{-2\beta_n \sqrt{n-1}}\frac{\P\left(L(\ust_n)<\frac{n-1}{2}-\sqrt{n-1}\right)}{\P\left(L(\ust_n)>\frac{n-1}{2}+\sqrt{n-1}\right)}=e^{-2\beta_n \sqrt{n-1}}\rightarrow 0.$$

Therefore, for slowly growing $\beta_n$'s, the interesting questions are about the typical diameter of $\wst^{\beta_n}_n$ and about $\E[L(\wst^{\beta_n}_n)]$.

In Proposition \ref{prop:expansion}, we prove that $\{\exp(-\beta_nU_e)\}_e$ forms a balanced expander electric network for $\beta_n\ll n/\log n$, which is enough to conclude that typically $\diam(\wst^{\beta_n}_n)=\Theta(n^{1/2})$ and to give a lower bound on $\E[L(\wst^{\beta_n}_n)]$.

\subsection{Balanced expanders and the typical diameter of the \texorpdfstring{$\wst_n^{\beta_n}$}{} for small \texorpdfstring{$\beta_n$}{}'s}

As we mentioned in Section \ref{sec:intro}, studying $\diam(\ust(G))$ for different $G$'s has a long history. We collect some results when $G$ is expander. The first result on general regular expanders was to determine $\E[\diam(\ust(G))]$ up to a polylogarithmic factor.

\begin{theorem*}[Theorem 15 of \cite{aldous}]\label{subsec:ust_diam} There exists a $K>0$ such that for any regular graph $G$ with $n$ vertices and relaxation time $\tau=(1-\lambda_2)^{-1}$,
 $$\frac{n^{1/2}}{K\tau \log n}\le\E [\diam(\ust(G))] \le K\tau^{1/2} n^{1/2}\log n.$$
\end{theorem*}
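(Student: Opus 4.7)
The plan is to use Wilson's algorithm for both bounds, since it gives the cleanest link between tree-distances in $\ust(G)$ and lengths of loop-erased random walks (LERWs): if one runs Wilson's algorithm with the ordering that processes $r$ first and $v$ next, the tree-path from $v$ to $r$ in $\ust(G)$ coincides exactly with $\mathrm{LE}(X_{[0,\tau_r]})$ for the random walk $X$ on $(G,\c)$ started at $v$. Thus the whole theorem reduces to probabilistic estimates on LERW length, with only the spectral gap of $G$ as input.

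For the upper bound, I would fix an arbitrary root $r$ and write $L_v := |\mathrm{LE}(X^{(v)}_{[0,\tau_r]})|$. By the triangle inequality $\diam(\ust(G)) \le 2\max_v L_v$, so it suffices to bound $\max_v L_v$. The LERW length is at most the number of distinct vertices visited by the walk before hitting $r$. Using the regularity of $G$, the walk mixes in time $t_{\mathrm{mix}} = O(\tau \log n)$, and after that point the walk hits $r$ at rate $\approx 1/n$. A birthday-style calculation, made quantitative via the Green's function estimate $\sum_{y} G_r(v,y)^2 = O(\tau \cdot E_v[\tau_r])$ that follows from the spectral bound on $\|P^t - \pi\|_2$, gives $E[L_v] = O(\tau^{1/2} n^{1/2}\log n)$. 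A Markov + union bound over $v$, combined with a restart argument to upgrade the first-moment bound to a high-probability bound on $\max_v L_v$, yields the desired $K\tau^{1/2}n^{1/2}\log n$.

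For the lower bound, I would again invoke Wilson's identity: pick two independent uniform vertices $U,V$; then $d_{\ust(G)}(U,V)$ has the same law as $|\mathrm{LE}(X^{(U)}_{[0,\tau_V]})|$, and $\diam \ge d_{\ust(G)}(U,V)$. I would lower bound $E[|\mathrm{LE}|]$ by Cauchy--Schwarz on the random walk decomposition $\tau_V = \sum_x N_x$, where $N_x$ is the local time at $x$ before $\tau_V$: this gives $\tau_V^2 \le |\{x:N_x>0\}|\sum_x N_x^2$, while $E\sum_x N_x^2$ is controlled by a second-moment Green's function estimate scaling like $\tau \cdot E[\tau_V]$. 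Since $E[\tau_V] \asymp n$ by the commute-time identity on a regular graph, one gets $E[|\mathrm{LE}|] = \Omega(n^{1/2}/(\tau\log n))$ after accounting for the gap between distinct-vertex count and true LERW length (closed via a standard inequality relating them on fast-mixing graphs), which transfers to a lower bound on the expected diameter.

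The main obstacle I foresee is exactly in the last step above: controlling the LERW length $|\mathrm{LE}|$ rather than the much cruder distinct-vertex count or hitting time $\tau_V$. The LERW length sits strictly between these, and obtaining a sharp two-sided estimate with only the spectral gap as input requires carefully quantifying how loop-erasure is governed by the Green's function of $(G,\c)$, together with dispersion after the mixing time. This is where the $(\log n)$ factors and the precise spectral dependence $\tau^{1/2}$ vs.\ $\tau$ in the two bounds are going to come from, and getting them simultaneously for every regular graph rather than tuning the argument to specific families like $K_n$ is the technical heart of the proof.
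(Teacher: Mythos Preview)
This theorem is not proved in the paper: it is quoted verbatim from \cite{aldous} as background material in Section~\ref{subsec:ust_diam}, with no argument given. So there is no ``paper's own proof'' to compare against; the paper simply cites the result and moves on to the more refined Theorem~2.3 of \cite{makowiec2023diameter}, which is what is actually applied to $\wst_n^{\beta_n}$.

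As for your proposal on its own merits: the Wilson's-algorithm framing is anachronistic relative to Aldous's 1990 paper (which predates Wilson's algorithm and uses the Aldous--Broder construction), but that is not fatal. The genuine gap is the one you yourself flag at the end. For the lower bound you control the number of \emph{distinct vertices} visited by the walk before $\tau_V$ via Cauchy--Schwarz on local times, but the loop-erased length $|\mathrm{LE}|$ can be dramatically smaller than the range of the walk --- on a graph where the walk revisits old vertices in long excursions before returning, loop-erasure deletes almost everything. Passing from ``range $\gtrsim n^{1/2}/(\tau\log n)$'' to ``LERW length $\gtrsim n^{1/2}/(\tau\log n)$'' is not a technicality you can close with a ``standard inequality''; it is essentially the whole content of the lower bound, and your sketch does not supply the missing idea. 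Aldous's actual argument works differently: it analyzes the first-visit structure of the Aldous--Broder walk directly and uses the spectral gap to control how quickly new vertices are added and how the tree-path to a target grows, rather than going through LERW at all.
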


For scaling limit results, one has to know the growth of the typical diameter up to a constant factor, which is definitely a greater challenge. Similarly to $\ust(K_n)$, one would expect that the growth of the typical diameter is like $\Theta(|V(G)|^{1/2})$. However, in Subsection 1.4 of \cite{michaeli2021diameter}, a sequence of regular graphs $G_n$ with \emph{polylogarithmic} relaxation times is given for which the typical diameter of $\ust(G_n)$ is not $\Theta(|V(G_n)|^{1/2})$, showing that one has to be careful if they want to strengthen Theorem 15 of \cite{aldous}.

In \cite{michaeli2021diameter} some nice general conditions are given which are sufficient to show the growth of the typical diameter of $\ust(G)$ being $\Theta(|V(G)|^{1/2})$. Balanced expanders, hypercubes and tori of dimension $\ge 5$ satisfy these conditions. Recently, in \cite{makowiec2023diameter} the results of \cite{michaeli2021diameter} are extended to the weighted spanning trees of electric networks.

In order to state the theorems precisely, we start with some notation. From a random walk on an electric network $\c$, one gets a lazy random walk if, at each time, independently with probability $\frac{1}{2}$, the step is taken according to the transition rule, and with probability $\frac{1}{2}$ the walker stays at the current vertex. Denote by $q_t$ the transition probabilities of the lazy random walk, by $\pi$ the stationary distribution, and by $t_{\mathrm{mix}}$ the uniform mixing time of the lazy random walk, i.e., 
\begin{gather*}
    t_{\mathrm{mix}}(G, \c) := \min \left\{ t \ge 0 : \max_{u,v \in V(G)} \left| \frac{q_t(u,v)}{\pi(v)} - 1\right| \le \frac{1}{2}\right\}.
\end{gather*}

Generalizing the conditions of \cite{michaeli2021diameter}, in \cite{makowiec2023diameter} the following conditions are considered for some $D, \alpha, \theta>0$:
\begin{itemize}
    \item[](bal) $(G, \c)$ is balanced if $ \frac{\max_{u \in V} \sum_{v} c(u, v)}{\min_{u \in V} \sum_{v} c(u, v)} \le D$,
    \item[](mix) $(G, \c)$ is mixing if $t_{\mathrm{mix}}(G, \c) \le |V(G)|^{\frac{1}{2}-\alpha}$,
    \item[](esc) $(G, \c)$ is escaping if $\mathcal{B}(G, \c):=\sum_{t=0}^{t_{\mathrm{mix}}} (t+1) \sup_{v \in V}q_t(v,v) \le \theta$.
\end{itemize}

\begin{theorem*}[Theorem 2.3 of \cite{makowiec2023diameter}, generalizing Theorem 1.1 of \cite{michaeli2021diameter}]
For any fixed $\alpha>0$, there exist $k, C, \gamma>0$ such that if an electric network $(G,\c)$ of $n$ vertices satisfies (mix) with $\alpha$ and 
conditions (bal) and (esc) with some $D=D(n)\le n^{\gamma}$ and $\theta=\theta(n) \le n^{\gamma}$, then for any $\eps>n^{-\gamma}$
\begin{gather*}
    \P\big( (CD \theta \epsilon^{-1})^{-k} \sqrt{n} \le \diam(\wst(\c)) \le (CD \theta \epsilon^{-1})^{k} \sqrt{n}\big) \ge 1 - \eps.    
\end{gather*}
\end{theorem*}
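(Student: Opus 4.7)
The plan is to prove this general theorem via Wilson's algorithm for generating $\wst(\c)$: build the tree by running successive loop-erased random walks (LERWs) and grafting them onto the growing forest. Under this construction, tree distances between pairs of vertices reduce to lengths of LERWs, so the whole theorem boils down to two-sided estimates on LERW length under the hypotheses (bal), (mix), (esc). The central quantitative input is that, from any vertex $v$, the LERW until hitting a set $S$ of size $s$ has length of order $\sqrt{n/s}$ up to factors polynomial in $D$, $\theta$, and $\epsilon^{-1}$. The escape quantity $\mathcal{B}(G,\c)\le \theta$ is precisely designed to control both sides of this: it bounds the expected number of distinct vertices visited by the walk in time $t\le t_\mathrm{mix}$ from above and, via second-moment arguments, from below; after loop-erasure these translate into matching path-length estimates.

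For the upper bound, I would fix a root $r$ and iteratively attach vertices. Using (mix) to reach near-stationarity within $t_\mathrm{mix}\le n^{1/2-\alpha}$ steps and (esc) to bound the occupation measure, one shows that a random walk from any vertex hits a set of $\pi$-measure $\mu$ within time $O(\theta/\mu)$ with high probability. Thus the first LERW reaches a seed set of size $\sqrt{n}$ in length $O(\sqrt{n}\cdot\mathrm{poly}(D,\theta))$. Subsequent LERWs attach to the growing tree ever more quickly, and a union bound over vertices, using (bal) to pass between $\pi$ and the uniform distribution, shows every vertex is within distance $(CD\theta\epsilon^{-1})^k\sqrt{n}$ of $r$ with probability $\ge 1-\epsilon/2$, yielding the upper diameter bound.

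For the lower bound, I would sample two uniformly random vertices $u,v$ and estimate the distance between them in $\wst(\c)$, which by Wilson's algorithm equals the length of the LERW from $u$ to $v$. The key is a lower bound on LERW length: a second-moment computation on $N_t:=|\{X_0,\ldots,X_t\}|$ using $\mathcal{B}(G,\c)\le\theta$ shows that the walk visits $\Theta(t/\theta)$ distinct vertices for $t\le t_\mathrm{mix}$. In particular, the walker cannot hit a fixed small set too quickly; combined with a standard combinatorial identity relating loop-erasure length to the number of new vertices created, the LERW until hitting $v$ has length $\Omega(\sqrt{n}/(D\theta\epsilon^{-1})^k)$ with probability $\ge 1-\epsilon/2$. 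This gives a pair of vertices far apart in the tree, hence the matching lower bound on the diameter.

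The main obstacle is the lower bound. The upper bound reduces to standard hitting-time and mixing estimates once (bal), (mix), (esc) are in hand, but the lower bound requires showing LERWs are genuinely long --- which can fail if the walk accumulates many loops that collapse under erasure. The quantity $\mathcal{B}(G,\c)$ quantifies exactly this loop formation, but exploiting it requires a delicate variance computation on LERW length and uniform control over starting vertices, where (bal) is essential to compare $\pi$ with the uniform distribution and where the polynomial slack in $D$, $\theta$, and $\epsilon^{-1}$ absorbs the accumulated logarithmic and constant factors.
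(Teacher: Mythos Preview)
The paper does not prove this theorem at all: it is quoted verbatim as Theorem~2.3 of \cite{makowiec2023diameter} (generalizing Theorem~1.1 of \cite{michaeli2021diameter}) and used as a black box to deduce Theorem~\ref{thm:has_diam_ust} once Proposition~\ref{prop:expansion} verifies the hypotheses (bal), (mix), (esc) for the random network $\{\exp(-\beta_n U_e)\}_e$. So there is no proof in this paper to compare your proposal against.

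That said, your sketch is a fair summary of the strategy actually used in \cite{michaeli2021diameter} and its weighted extension \cite{makowiec2023diameter}: Wilson's algorithm, two-sided LERW length estimates driven by the bubble sum $\mathcal{B}(G,\c)$, hitting-time bounds via mixing for the upper bound, and a second-moment argument on the range/capacity for the lower bound. One point to be more careful about: the lower bound in those papers does not literally use ``the LERW from $u$ to $v$'' for two uniform vertices, since tree distance in Wilson's algorithm is not simply the length of a single LERW between them; rather, one controls the length of the LERW from a vertex to a small seed set and shows that with good probability this path survives largely intact in the final tree. Your description of the LERW lower bound via a variance computation on the number of distinct vertices is correct in spirit, but the passage from ``many distinct vertices visited'' to ``long loop-erasure'' is exactly where the bubble condition (esc) does real work and where the argument in \cite{michaeli2021diameter} is most delicate; your sketch glosses over this step.
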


\begin{prop}\label{prop:expansion} If $\beta_n\ll n/\log n$, then for $\c_{n}:=\{\exp(-\beta_nU_e)\}_{e\in E(K_n)}$ ,
\begin{gather*}
\P\left(\lambda_2(\c_n)<0.92\text{ and }\frac{\max_{u \in V} \sum_{v} c_n(u, v)}{\min_{u \in V} \sum_{v} c_n(u, v)} \le 1.05\right)\rightarrow 1,
\end{gather*}
where $1=\lambda_1(\c_n)\ge\lambda_2(\c_n)\ge \ldots \ge\lambda_n(\c_n)$ are the eigenvalues  of the transition matrix of the random walk w.r.t.~$\c_n$.
\end{prop}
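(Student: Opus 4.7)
The plan is to verify the two conditions separately via concentration for the random conductances $c_n(e) := \exp(-\beta_n U_e)$. Write $\bar c_n := \mathbb{E}[c_n(e)] = (1 - e^{-\beta_n})/\beta_n$; each $c_n(e)$ lies in $(0,1]$, the variance satisfies $\mathrm{Var}(c_n(e)) \le \mathbb{E}[c_n(e)^2] \le \bar c_n$, and the hypothesis $\beta_n \ll n/\log n$ is equivalent to $(n-1)\bar c_n \gg \log n$.

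First I would establish the balance condition. The weighted degree $S_u := \sum_{v \ne u} c_n(u,v)$ is a sum of $n-1$ i.i.d.\ random variables in $[0,1]$ with mean $\bar c_n$ and variance at most $\bar c_n$. Bernstein's inequality gives
\[
\P\bigl(|S_u - (n-1)\bar c_n| > 0.02\,(n-1)\bar c_n\bigr) \le 2\exp\bigl(-\Omega((n-1)\bar c_n)\bigr),
\]
and $(n-1)\bar c_n \gg \log n$ allows a union bound over $u \in V(K_n)$ to force $\max_u S_u / \min_u S_u \le 1.05$ with probability tending to $1$.

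Next I would control the spectral bound. Write $P = D^{-1} C$ with $C$ the symmetric conductance matrix and $D$ the diagonal matrix of degrees $S_u$; the eigenvalues of $P$ coincide with those of the symmetric matrix $D^{-1/2} C D^{-1/2}$. The expected conductance matrix $\mathbb{E}[C] = \bar c_n (J - I)$ has top eigenvalue $(n-1)\bar c_n$ with eigenvector $\mathbf{1}$ and all other eigenvalues equal to $-\bar c_n$. I would apply matrix Bernstein to
\[
C - \mathbb{E}[C] = \sum_{\{u,v\} \in E(K_n)} (c_n(u,v) - \bar c_n)(E_{uv} + E_{vu});
\]
each summand has operator norm at most $1$ and the matrix variance is $(n-1)\mathrm{Var}(c_n)\cdot I$ with $\mathrm{Var}(c_n) \le \bar c_n$, yielding
\[
\|C - \mathbb{E}[C]\|_{\mathrm{op}} = O\bigl(\sqrt{n \bar c_n \log n}\bigr) = O\bigl(\sqrt{n \log n / \beta_n}\bigr)
\]
with probability tending to $1$. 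Combining Weyl's inequality with the balance event $S_u = (1 + o(1))(n-1)\bar c_n$ gives
\[
|\lambda_2(P)| \le \frac{\bar c_n + O(\sqrt{n\log n/\beta_n})}{(1-o(1))(n-1)\bar c_n} = O\bigl(\sqrt{\beta_n \log n / n}\bigr) = o(1),
\]
comfortably below $0.92$.

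The main subtlety is that both concentration steps must remain sharp across the full range of $\beta_n$, with the limiting regime being $\beta_n = \Theta(n/\log n)$. There $\bar c_n \sim 1/\beta_n$ is small, the degree rate $(n-1)\bar c_n$ sits at order $\log n$, and the matrix-Bernstein error $\sqrt{n\log n/\beta_n}$ is precisely the largest deviation one can tolerate before the spectral-gap estimate degenerates. The quantitative threshold $\beta_n \ll n/\log n$ is exactly what Bernstein and matrix Bernstein need to succeed with room to spare, so the proposition captures the natural limit of this argument; pushing past this barrier would require a finer analysis of the random electric network.
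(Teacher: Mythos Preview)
Your argument is correct and takes a genuinely different route from the paper. The paper partitions the edges into layers $E_{n,j}=\{e:(j-1)p_n\le U_e<jp_n\}$ with $p_n\sim K\log n/n$, so that within each layer the conductances are within a constant factor of one another; it then quotes a spectral-gap result for Erd\H{o}s--R\'enyi graphs to get a Cheeger bound on each layer $(V,E_{n,j})$, combines these into a Cheeger bound on the full network, and finally applies Cheeger's inequality to obtain $\lambda_2\le 0.92$. Your approach is more direct: matrix Bernstein on $C-\mathbb{E}[C]$ followed by Weyl yields $\lambda_2(\c_n)=O(\sqrt{\beta_n\log n/n})=o(1)$, a strictly stronger conclusion, and it bypasses both the layer decomposition and the quadratic loss in Cheeger's inequality. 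One small point to tighten: the displayed inequality for $|\lambda_2(P)|$ is not literally a consequence of Weyl applied to $C$; what you really use is that on the balance event $D=(1+o(1))(n-1)\bar c_n\, I$, so $\tilde P=D^{-1/2}CD^{-1/2}$ and $M:=C/((n-1)\bar c_n)$ satisfy $\|\tilde P-M\|=o(1)$ (since $\|M\|=1+o(1)$), and then Weyl gives $\lambda_2(\tilde P)=\lambda_2(M)+o(1)=o(1)$. This requires the balance to be $(1+o(1))$ rather than merely within a factor $1.05$, but your Bernstein step delivers that just as easily.
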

As we detail in Remark \ref{rmrk:sharpness_spectral}, this statement is not sharp.

\begin{proof}[Proof of Theorem \ref{thm:has_diam_ust}]
We have to check that the spectral gap implies (mix) and (esc), which we do likewise the reasoning of Subsection 1.3 of \cite{michaeli2021diameter}. We denote by $\lambda_1(q_n)\ge \ldots \ge \lambda_n(q_n)$ the eigenvalues of the transition matrix of the lazy random walk on $\c_n$.

By Proposition \ref{prop:expansion}, $\pi(v)\in \left[\frac{1}{1.05}\frac{1}{n}, 1.05\frac{1}{n}\right]$ $\forall v\in V(K_n)$ and the absolute spectral gap of the lazy random walk satisfies $\gamma^{*, \mathrm{lazy}}_n:=1-\max (\lambda_2(q_n),|\lambda_n(q_n)|)=\frac{1}{2}(1-\lambda_2(\c_n))>\frac{1}{2}(1-0.92)=0.04$. In (12.13) of \cite{levin2017markov},
$$\left|\frac{q^{(t)}(x,y)}{\pi(y)}-1\right|\le \frac{\exp(-t\gamma^{*, \mathrm{lazy}}_n)}{\min_x \pi(x)}$$
is obtained for any $x,y\in V(K_n)$, therefore with probability $\rightarrow 1$, we have  $t_{\mathrm{mix}}(G, \c_n)\le c_0\log n$ and $q_{t}^{(\c_n)}(v,v)\le c_1n^{-1}+c_2e^{-c_3t}$ for some constants $c_0,\ldots, c_3>0$ not depending on $n$, giving us the estimate $\mathcal{B}(G, \c_n)\le (c_0\log n+1)^2c_1n^{-1}+c_2\sum_{t=0}^{\infty}(t+1)e^{-c_3t}$ which is bounded as $n\rightarrow \infty$. Then (bal), (mix) and (esc) of Theorem 2.3 of \cite{makowiec2023diameter} are satisfied for some fixed $D, \alpha, \theta$, so the typical diameter grows like $\Theta(n^{1/2})$.
\end{proof}

\subsection{The proof of the spectral gap}\label{subsec:spectral_gap}
Now, our aim is to study the expansion of $\{\exp(-\beta_nU_e)\}_{e\in E(K_n)}$. The spectral gap for Erdős-Rényi graphs is well-studied. For us the following result is important.
\begin{theorem}[Theorem 1.1 of \cite{hoffman2021spectral}]\label{thm:hkp}
Fix $\delta>0$ and let
$ p \ge \left(\frac{1}{2}+\delta \right)\frac{\log n}{n}.$ Let $d=p(n-1)$ denote the expected degree of a vertex. For every fixed $\epsilon > 0,$
there is a constant $C=C(\delta, \epsilon),$
so that
$$\P\left(\lambda\left(H_{n, p}\right)<\frac{C}{\sqrt{d}}\right)\ge 1-Cne^{-(2-\epsilon)d}-Ce^{-d^{1/4}\log n},$$
where $1=\lambda_1(G)\ge \lambda_2(G)\ge \ldots$ are the eigenvalues of the transition matrix of the simple random walk on $G$ and $\lambda(G):=\max_{i\ge 2}|\lambda_i\left(G\right)|$.
\end{theorem}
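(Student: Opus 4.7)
The plan is to bound the spectrum of the symmetric matrix $\mathcal{P} := D^{-1/2} A D^{-1/2}$ (which is similar to the transition matrix $P = D^{-1}A$ of the simple random walk on the giant $H = H_{n,p}$) restricted to the orthogonal complement of its top eigenvector. Writing $A = p J + N$ with $N = A - \E A$ on $H$, the operator $D^{-1/2}(pJ) D^{-1/2}$ is essentially the rank-one projection onto the Perron eigenvector and contributes $\lambda_1 = 1$, so the task reduces to showing $\| D^{-1/2} N D^{-1/2} \|_{\mathbf{1}^\perp} \le C/\sqrt{d}$.

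The first ingredient is degree control. A Chernoff estimate gives $\P(\deg(v) \le \eta d) \le e^{-(1-\eta)^2 d/2}$, and a vertex lying in the giant while having a single neighbour has probability $\sim d(1-p)^{n-2} \sim d e^{-d}$; taking a union bound over the $n$ vertices produces the first error term $Cn e^{-(2-\eps)d}$, after which every vertex of $H$ has degree in $[\eta d, (1+\eta) d]$ and $D^{\pm 1/2}$ is well-controlled entrywise. The second, main ingredient is a spectral norm bound on $N$ via the Kahn--Szemer\'edi / Feige--Ofek discrepancy method. I would split the coordinates of any unit test vectors $x, y$ into \emph{light} (magnitude $\le 1/\sqrt{n}$) and \emph{heavy} parts and control $\langle N x, y\rangle$ block by block: heavy--heavy by Bernstein applied to the at most $n/d$ heavy entries, and light--heavy and light--light by a discrepancy inequality of the shape
\[
\bigl| e(S, T) - p|S||T| \bigr| \;\le\; C \Bigl( \sqrt{d |S||T| \log(e n^2 / (|S||T|))} \;+\; \log n \Bigr) \qquad \forall S, T \subseteq V(H),
\]
which holds with probability $\ge 1 - C e^{-d^{1/4}\log n}$ by Chernoff combined with a carefully optimised union bound over all such set pairs, the exponent $d^{1/4}$ arising from balancing the discretisation scale against the strength of the Chernoff tail in the sparse regime. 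Combining the three block estimates with degree concentration yields the bound $C/\sqrt{d}$ on $\lambda$.

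The main obstacle is the sparseness $d \asymp \log n$, where both degree tails and discrepancy estimates sit at the very edge of what Chernoff inequalities deliver. Naive union bounds over all vertex subsets would be exponentially too weak, and a single near-isolated vertex in $H$ attached by one edge would produce a localised eigenvector with eigenvalue $\asymp 1$, destroying the spectral gap entirely. I would therefore pre-emptively remove any ``bad'' local structures (vertices of atypically low degree, short cycles of atypically high multiplicity) that survive the first union bound, apply the discrepancy argument to the pruned graph, and finally reinstate them by Cauchy interlacing / Weyl's inequality, controlling the rank of the perturbation by its cardinality. The two exponential error terms in the statement correspond precisely to the two sources of failure: $Cn e^{-(2-\eps)d}$ for the minimum-degree event (sharp because it counts near-leaves), and $Ce^{-d^{1/4}\log n}$ for the bilinear form / discrepancy event.
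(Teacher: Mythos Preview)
This theorem is not proved in the paper: it is quoted verbatim as Theorem~1.1 of \cite{hoffman2021spectral} and used as a black box in the proof of Lemma~\ref{lemma:exp_and_deg}. There is therefore no ``paper's own proof'' to compare your proposal against.

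For what it is worth, your sketch is a reasonable outline of the Kahn--Szemer\'edi / Feige--Ofek approach that Hoffman--Kahle--Paquette do in fact follow (degree concentration plus a light/heavy decomposition of the centred adjacency matrix), and you have correctly identified the two error terms with the two failure modes (near-isolated vertices in the giant, and the discrepancy union bound). But since the present paper only \emph{cites} this result, your proposal is addressing the wrong target: nothing in this paper needs to be checked here beyond the statement itself.
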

Using that result, we can prove that our electric network is also an expander. For convenience, we switch to estimating the Cheeger constant instead of dealing with eigenvalues.

\begin{defi}\label{def:cheeger}For $A, B\subseteq V(K_n)$ not necessary disjoint sets, let us use the notation 
$$c_E(A, B):=\sum_{\substack{x\in A, y\in B,\\ (x,y)\in E}}c(x,y)\ \text{ and } \ c(A, B):=c_{E(K_n)}(A, B).$$
Then the Cheeger constant is defined as
$$h(\c):=\min\left\{\frac{c(S, S^c)}{c(S, V(K_n))}: \; S\subseteq V(K_n)\text{ with } \pi(S)\le \frac{1}{2}\right\}.$$

The eigenvalues of the transition matrix of the random walk on $\c$ are denoted by $1=\lambda_1(\c)\ge \lambda_2(\c)\ge \ldots \ge \lambda_n(\c)$.
\end{defi}
Then the stationary measure can be expressed simply as $\pi(S)=\frac{c(S, V(K_n))}{c(V(K_n), V(K_n))}$. 

Recall Cheeger's inequality connecting the Cheeger constant and the spectral gap.

\begin{theorem*}[Cheeger's inequality, Theorem 13.10 of \cite{levin2017markov}] Using the notation of Definition~\ref{def:cheeger}, for any electric network with conductances $\c$, 
    $$\frac{h^2(\c)}{2}\le 1-\lambda_2(\c)\le 2h(\c).$$
\end{theorem*}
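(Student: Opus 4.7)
The plan is to prove the two inequalities separately using the variational characterization
\[
1-\lambda_2(\c)=\inf_{\substack{f\not\equiv 0\\ \sum_x\pi(x)f(x)=0}}\frac{\mathcal{E}(f,f)}{\|f\|_\pi^2},
\]
where $\pi(x)=c(x,V(K_n))/C$ with $C:=c(V(K_n),V(K_n))$, and the Dirichlet form is
\[
\mathcal{E}(f,f)=\tfrac{1}{2}\sum_{x,y}\pi(x)p(x,y)(f(x)-f(y))^2=C^{-1}\sum_{\{x,y\}}c(x,y)(f(x)-f(y))^2.
\]
This characterization follows from self-adjointness of the transition operator on $L^2(\pi)$ by reversibility. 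The upper bound $1-\lambda_2\le 2h(\c)$ then falls out from a single test function: choose a minimizer $S$ of the Cheeger ratio with $\pi(S)\le 1/2$ and plug in the mean-zero function $f:=\mathbf{1}_S-\pi(S)$. A direct computation gives $\mathcal{E}(f,f)=c(S,S^c)/C$ and $\|f\|_\pi^2=\pi(S)\pi(S^c)\ge \pi(S)/2$, so the Rayleigh quotient is at most $2c(S,S^c)/c(S,V)=2h(\c)$.

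For the harder bound $h(\c)^2/2\le 1-\lambda_2$, I would let $f$ be a mean-zero eigenfunction attaining the infimum, let $m$ be a $\pi$-median of $f$, and split $f-m=g_+-g_-$ into positive and negative parts. Because $g_+(x)g_-(x)=0$ for every $x$, a short case check on signs gives the pointwise inequality
\[
\bigl((f-m)(x)-(f-m)(y)\bigr)^2\ge (g_+(x)-g_+(y))^2+(g_-(x)-g_-(y))^2,
\]
so that $\mathcal{E}(f,f)\ge \mathcal{E}(g_+,g_+)+\mathcal{E}(g_-,g_-)$. Combined with $\|f-m\|_\pi^2=\|g_+\|_\pi^2+\|g_-\|_\pi^2\ge \|f\|_\pi^2$ (the inequality uses $\sum\pi f=0$), the problem reduces to a \emph{one-sided} Cheeger bound $\mathcal{E}(g,g)\ge \tfrac{1}{2}h(\c)^2\|g\|_\pi^2$ for any nonnegative $g$ with $\pi(\{g>0\})\le 1/2$: applying it to $g_+$ and $g_-$ (both supported on sets of $\pi$-mass $\le 1/2$ by the median choice) and summing gives $\mathcal{E}(f,f)\ge \tfrac{1}{2}h(\c)^2\|f\|_\pi^2$, i.e.\ $1-\lambda_2\ge h(\c)^2/2$.

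The core technical step---and the main obstacle---is this one-sided inequality; my plan is to couple Cauchy--Schwarz with a coarea/layer-cake identity. Factoring $g(x)^2-g(y)^2=(g(x)-g(y))(g(x)+g(y))$ and applying Cauchy--Schwarz to the edge sum gives
\[
\Bigl(\sum_{\{x,y\}}c(x,y)\,|g(x)^2-g(y)^2|\Bigr)^2\le \sum c(x,y)(g(x)-g(y))^2\cdot \sum c(x,y)(g(x)+g(y))^2,
\]
and $(g(x)+g(y))^2\le 2(g(x)^2+g(y)^2)$ together with $\sum_{\{x,y\}}c(x,y)(g(x)^2+g(y)^2)=\sum_x g(x)^2 c(x,V)=C\|g\|_\pi^2$ bounds the second factor by $2C\|g\|_\pi^2$. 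For the left-hand side, the layer-cake identity
\[
\sum_{\{x,y\}}c(x,y)\,|g(x)^2-g(y)^2|=\int_0^\infty c(S_t,S_t^c)\,dt,\qquad S_t:=\{g^2>t\},
\]
combined with $\pi(S_t)\le \pi(\{g>0\})\le 1/2$, the definition $c(S_t,S_t^c)\ge h(\c)\,c(S_t,V)=h(\c)\,C\pi(S_t)$, and Fubini $\int_0^\infty C\pi(S_t)\,dt=C\|g\|_\pi^2$, forces the left-hand side to be at least $h(\c)\,C\|g\|_\pi^2$. Chaining the two bounds and squaring yields $h(\c)^2\,C^2\|g\|_\pi^4\le 2C^2\,\mathcal{E}(g,g)\,\|g\|_\pi^2$, whence $\mathcal{E}(g,g)\ge \tfrac{1}{2}h(\c)^2\|g\|_\pi^2$. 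Assembling this with the median decomposition of the previous paragraph closes the proof.
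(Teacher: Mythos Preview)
The paper does not prove this statement at all: Cheeger's inequality is quoted there as a classical fact, with a citation to Theorem~13.10 of \cite{levin2017markov}, and is used as a black box in the proof of Proposition~\ref{prop:expansion}. So there is no ``paper's own proof'' to compare against.

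That said, your proof is correct and is essentially the standard argument (variational characterization of $1-\lambda_2$; indicator test function for the easy direction; eigenfunction, median split into $g_+,g_-$, and the Cauchy--Schwarz/coarea trick for the hard direction). The pointwise inequality you state for the split follows from $g_+g_-=0$ exactly as you use it, and the normalization bookkeeping with the total weight $C$ is handled consistently. One small clarification you might add: the eigenfunction $f$ for $\lambda_2$ is automatically $\pi$-mean-zero because it is orthogonal to the eigenspace of $\lambda_1=1$, which is the constants; you implicitly use this when asserting $\|f-m\|_\pi^2\ge\|f\|_\pi^2$.
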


\begin{notation}\label{notation:spectr}
Consider $p_n:=\lceil n/(K\log n)\rceil^{-1}\sim K\log n/n$, where we think of $K$ as a fixed, big number. We decompose the edges of $E(K_n)$ into
$$E_{n, j}:=\left\{e\in E(K_n): (j-1)p_n\le U_e<jp_n\right\} \ \text{ for }j=1, \ldots, \lceil n/(K\log n)\rceil.$$

The degree of $v$ in $G_{n, j}:=(V(K_n), E_{n, j})$ is denoted by $\deg^{(n, j)}(v)$. Let the eigenvalues of $G_{n, j}$ be $1=\lambda^{(n, j)}_1\ge \lambda^{(n, j)}_2, \ldots, \lambda^{(n, j)}_n$ and $\lambda^{(n, j)}:=\max_{i\ge 2}\left|\lambda^{(n, j)}_i\right|$. 
\end{notation}

\begin{lemma}\label{lemma:exp_and_deg} Using Notation \ref{notation:spectr}, for any $\eps>0$ there exist $K=K(\eps), C=C(\eps)$ such that
\begin{gather*}
    \P\left(\frac{\deg^{(n, j)}(v)}{\E \deg^{(n, j)}(v)}\notin \left(\frac{1}{1+\eps}, 1+\eps\right)\;\forall v\in V(K_n), \forall j\le \left\lceil \frac{n}{K\log n}\right\rceil\right)=1-o\left(\frac{1}{n}\right)\text{ and}\\
    \P\left(\lambda^{(n, j)}\le \frac{C}{{\sqrt{\log n}}} \;\forall j\le \left\lceil \frac{n}{K\log n}\right\rceil \right)=1-o\left(\frac{1}{n}\right).
\end{gather*}
\end{lemma}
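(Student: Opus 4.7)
The plan is to treat each $G_{n,j}$ separately as an Erdős–Rényi graph and then take union bounds over $j$ and $v$. The key observation is that, for each fixed $j$, the events $\{e \in E_{n,j}\} = \{U_e \in [(j-1)p_n, jp_n)\}$ are independent across $e \in E(K_n)$, each with probability $p_n$. Hence $G_{n,j}$ is distributed as $G(n, p_n)$ with $p_n \sim K \log n / n$, and $\deg^{(n,j)}(v) \sim \mathrm{Bin}(n-1, p_n)$ with mean $d := (n-1)p_n \sim K\log n$. Note that we have no need to reason about joint independence across $j$, since both claims are preserved under union bounds.

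\emph{Degrees.} A standard Chernoff bound gives $\P\bigl(|\deg^{(n,j)}(v) - d| > \tfrac{\eps}{2} d\bigr) \le 2\exp(-c_\eps d)$ for some $c_\eps > 0$ depending only on $\eps$; substituting $d \sim K\log n$ yields at most $2 n^{-c_\eps K(1+o(1))}$. Taking the union bound over the at most $n \cdot \lceil n/(K\log n)\rceil \le n^2$ pairs $(v,j)$ yields a failure probability $O(n^{2 - c_\eps K})$, which is $o(1/n)$ once $K = K(\eps)$ is chosen large enough. For labels in the good event, $\deg^{(n,j)}(v)/d \in (1 - \eps/2, 1 + \eps/2) \subseteq (1/(1+\eps), 1 + \eps)$, as desired.

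\emph{Spectrum.} I would apply Theorem~\ref{thm:hkp} to each $G_{n,j}$ with a fixed $\epsilon' \in (0,1)$. Since $p_n \sim K\log n / n$, the hypothesis $p \ge (1/2+\delta)\log n/n$ holds for $K$ above a fixed constant, and the conclusion reads
\[
\P\Bigl(\lambda^{(n,j)} \ge \tfrac{C_0}{\sqrt{K\log n}}\Bigr) \le C_0\, n\, e^{-(2-\epsilon')K\log n} + C_0\, e^{-(K\log n)^{1/4}\log n}.
\]
The first error term is $C_0\, n^{1-(2-\epsilon')K}$, and the second decays superpolynomially in $n$. Another union bound over the at most $n/(K\log n)$ values of $j$ gives total failure probability $o(1/n)$ once $K$ is large enough. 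Setting $C(\eps) := C_0 / \sqrt{K(\eps)}$ yields the claimed uniform bound $\lambda^{(n,j)} \le C(\eps)/\sqrt{\log n}$.

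\emph{Main obstacle.} There is no serious obstacle: both parts reduce to taking $K = K(\eps)$ large enough so that the exponential factor $e^{-c K\log n} = n^{-cK}$ in the concentration bounds dominates the polynomial losses in the union bounds. The only care required is to pick a single $K$ that simultaneously (i) satisfies the $(1/2+\delta)$-threshold hypothesis of Theorem~\ref{thm:hkp}, (ii) makes the Chernoff bound for the degrees absorb the $O(n^2)$ union bound, and (iii) makes both error terms of Theorem~\ref{thm:hkp} absorb the $O(n/\log n)$ union bound over $j$.
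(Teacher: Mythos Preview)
Your proposal is correct and follows essentially the same approach as the paper's proof: Chernoff for the degree concentration, Theorem~\ref{thm:hkp} for the spectral bound, and a union bound over all $v\in V(K_n)$ and all $j\le \lceil n/(K\log n)\rceil$, with $K=K(\eps)$ chosen large enough to absorb the polynomial losses. The only cosmetic differences are that the paper fixes the parameter $\epsilon'=1$ in Theorem~\ref{thm:hkp} and phrases the Chernoff step via a specific constant $f(\eps)$, while you leave $\epsilon'\in(0,1)$ generic and make the identification $G_{n,j}\sim G(n,p_n)$ explicit.
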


\begin{proof}
In this proof we emphasize the dependence on $n$ and $K$ of the quantities by the notation $p(n, K)=\lceil n/(K\log n)\rceil^{-1}$ and $d(n, K)=(n-1)p(n, K)$.

The concentration of the degrees follows from Chernoff's bound. Namely, fix $v$ and $j$. Since $\deg^{(n, j)}(v)\sim \text{Bin}\left(n-1, p(n, K)\right)$, by Corollary A.1.14. of \cite{alon2016probabilistic}, there exists $f(\eps)>0$ such that
\begin{gather*}
    \P\left(\frac{\deg^{(n, j)}(v)}{\E \deg^{(n, j)}(v)}\notin \left(\frac{1}{1+\eps}, 1+\eps\right)\right)\le
    2e^{-f(\eps)d(n, K)}\le 2e^{-f(\eps)(K\log n)/2}\stackrel{\text{large }K}{=}O(n^{-3}),
\end{gather*}
where we also used that $d(n, K)\ge (K\log n)/2$.

By the union bound running on $v\in V(K_n)$ and $1\le j \le \lceil n/(K\log n)\rceil$,
\begin{gather*}\P\left(\exists v\in V(K_n), \exists j \le \left\lceil \frac{n}{K\log n}\right\rceil: \ \frac{\deg^{(n, j)}(v)}{\E \deg^{(n, j)}(v)}\notin \left(\frac{1}{1+\eps}, 1+\eps \right) \right)=o(n^2)O(n^{-3})=O(n^{-1}).
\end{gather*}

Since $d(n, K)\ge 3\log n$ for $K>3$, by Theorem \ref{thm:hkp}
with $\eps=1$ and by the union bound
\begin{align*}
    \P\left(\lambda^{(n, j)}\le \frac{C}{{\sqrt{\log n}}} \ \forall j\le \left\lceil \frac{n}{K\log n}\right\rceil\right)&\ge 1 - \frac{n}{K\log n} \left(Cne^{-(2 - \eps)d(n, K)} + Ce^{-d(n, K)^{1/4} \log n}\right)\\
    &=1-o(n^{-1}).
\end{align*}
\end{proof}

Now, we turn to the proof of Proposition \ref{prop:expansion}. 

\begin{remark}[Proposition \ref{prop:expansion} is not sharp]\label{rmrk:sharpness_spectral}
In order to make the proof more readable, we have chosen to write the proof with specific numbers. With the method of our proof, a more careful computation could result in $\P\left(1-\lambda_2(\c_n)\ge \frac{1}{8}\right)\rightarrow 1$. 

If one needs a delicate result on the second eigenvalue, then it is reasonable to estimate the eigenvalues directly from Rayleigh coefficients instead of using Cheeger's inequality. 

To understand the typical diameter of $\wst_n^{\beta_n}$, we only need the existence of the spectral gap and not the exact value of it, hence we do not care about the sharpness of this statement.
\end{remark}

\begin{proof}[Proof of Proposition \ref{prop:expansion}] 
Let $A_{n}^{\mathrm{deg}}:=\left\{\frac{\deg^{(n, j)}(v)}{\E[\deg^{(n, j)}(v)]}\in \left(\frac{99}{100}, \frac{100}{99}\right)\;\forall v\in V(K_n), \forall j\le \left\lceil \frac{n}{K\log n}\right\rceil \right\}$ and $A_n:=\left\{\lambda^{(n, j)}\le \frac{C}{{\sqrt{\log n}}}\; \forall j\le \left\lceil \frac{n}{K\log n}\right\rceil \right\}\cap A_{n}^{\mathrm{deg}}$, where $K$ and $C$ are chosen to have $\P(A_n)\rightarrow 1$ which is possible by Lemma~\ref{lemma:exp_and_deg}. Let $n$ be so large that $\left(1-\frac{C}{\sqrt{\log n}}\right)\ge 0.95$ and $e^{-\beta_n p_n}\ge 0.98$ for $p_n=\lceil n/(K\log n)\rceil^{-1}$. Abbreviating $V:=V(K_n)$ and $N_{E_{n, j}}(A, B):=|\{u\in A, v\in B: \;(u,v)\in E_{n, j}\}|$,
\begin{align*}
    A_{n} &\stackrel{\text{(C)}}{\subseteq} \left\{\frac{N_{E_{n, j}}(S, S^c)}{N_{E_{n, j}}(S, V)}\ge \frac{0.95}{2} \ \ \ \ \forall S \text{ with }|S|\le \frac{|V|}{2}, \ \forall j\le \left\lceil \frac{n}{K\log n}\right\rceil \right\}\cap A_{n}^{\mathrm{deg}}\\
    &\stackrel{\text{(*)}}{\subseteq} \left\{\frac{c_{E_{n, j}}(S, S^c)}{c_{E_{n, j}}(S, V)} \ge \frac{0.95\cdot 0.85}{2} \ \ \ \ \forall S \text{ with }\pi(S)\le \frac{1}{2}, \ \forall j\le \left\lceil \frac{n}{K\log n}\right\rceil\right\}\cap A_{n}^{\mathrm{deg}}\\
    &\subseteq \left\{\frac{\sum_{j=1}^{\lceil n/(K\log n)\rceil}c_{E_{n, j}}(S, S^c)}{\sum_{j=1}^{\lceil n/(K\log n)\rceil} c_{E_{n, j}}(S, V)} \ge \frac{0.95\cdot 0.85}{2} \ \ \ \ \forall S \text{ with }\pi(S)\le \frac{1}{2}\right\}\cap A_{n}^{\mathrm{deg}}\\ 
    &\stackrel{\text{(C)}}{\subseteq}\left\{1-\lambda_2^{(n)}\ge \frac{1}{2} \left(\frac{0.95\cdot 0.85}{2}\right)^{2} \right\}\cap A_{n}^{\mathrm{deg}}\subseteq\left\{1-\lambda_2^{(n)}\ge 0.08\right\}\cap A_{n}^{\mathrm{deg}},
\end{align*}
where in the steps marked by (C) we use Cheeger's inequality. Now, we check that $(*)$ holds. If $|S|>\frac{1}{2}\frac{100^2}{99^2}0.98^{-1}n$, then 
\begin{gather*}
    \pi_{E_{n, j}}(S)=\frac{c_{E_{n, j}}(S, V)}{c_{E_{n, j}}(V, V)}\ge \frac{\frac{99}{100}|S|(n-1)p_ne^{-jp_n\beta_n}}{\frac{100}{99}|V|(n-1)p_ne^{-(j-1)p_n\beta_n}}\ge\frac{99^2}{100^2}0.98\frac{|S|}{n}>\frac{1}{2}.
\end{gather*}
So $\pi(S)\le \frac{1}{2}$ can happen only for $|S|\le \frac{1}{2}\frac{100^2}{99^2}0.98^{-1}n<0.53n$. If $|S|\le \frac{1}{2}n$, then of course
\begin{gather*}
\frac{c_{E_{n, j}}(S, S^c)}{c_{E_{n, j}}(S, V)}\ge \frac{e^{-jp_n\beta_n}N_{E_{n, j}}(S, S^c)}{e^{-(j-1)p_n\beta_n}N_{E_{n, j}}(S, V)}\ge 0.98 \frac{N_{E_{n, j}}(S, S^c)}{N_{E_{n, j}}(S, V)},
\end{gather*}
and if $\frac{1}{2}n\le |S|\le 0.53n$, then $|S^c|\le 0.47n$, hence
\begin{align*}
    \frac{c_{E_{n, j}}(S, S^c)}{c_{E_{n, j}}(S, V)}&\ge 0.98\frac{N_{E_{n, j}}(S, S^c)}{N_{E_{n, j}}(S^c, V)} \frac{\frac{99}{100}|S^c|(n-1)p_n}{\frac{100}{99}|S|(n-1)p_n}\\
    &\ge 0.98\frac{99^2}{100^2}\frac{0.47}{0.53} \frac{N_{E_{n, j}}(S, S^c)}{N_{E_{n, j}}(S^c, V)}\ge 0.85 \frac{N_{E_{n, j}}(S, S^c)}{N_{E_{n, j}}(S^c, V)},
\end{align*}
therefore $(*)$ holds both for $|S|\le \frac{1}{2}n$ and for $\frac{1}{2}n\le |S|\le 0.53n$.

Moreover, on $A_{n}^{\mathrm{deg}}$, for $e^{-\beta_np_n}\le 0.98$, we have $\frac{\max_{u \in V} \sum_{v} c_n(u, v)}{\min_{u \in V} \sum_{v} c_n(u, v)}\le (0.99^2\cdot 0.98)^{-1}<1.05$.
\end{proof}

\subsection{Expected total length for small \texorpdfstring{$\beta_n$}{}'s} \label{subsec:ust_length}

In Subsection \ref{subsec:mst_length}, we have shown Theorem \ref{thm:length} a) and b) by giving an upper bound on the expected total length. In this Subsection, we prove Theorem \ref{thm:length} c) for $\beta_n\ll n/\log n$.

In Theorem \ref{thm:length} c) of the first version of this paper, we have only shown that $\E\left[L(\wst_n^{\beta_n})\right]=\Omega\left(\frac{n}{\beta_n+1}\right)$ by giving the estimate $\mathcal{R}_{\mathrm{eff}}(u\leftrightarrow v)\ge \frac{\beta_n}{1.1e n}$ for any $u, v\in V(K_n)$ with probability $1-O(n^{-2})$. Using the results of \cite{makowiec2024local} on effective resistances, one can obtain a better result, see also Theorem 1.4 of \cite{makowiec2024local}.

\begin{proof}[Proof of Theorem \ref{thm:length} c)] We consider
\begin{align*}
A_{n}:=\left\{\mathcal{R}_{\mathrm{eff}}(v\leftrightarrow w)\in \left(\frac{(1-\delta_n)2\beta_n}{(1-e^{-\beta_n})n}, \frac{(1+\delta_n)2\beta_n}{(1-e^{-\beta_n})n}\right) \; \forall v, w\in V(K_n)\right\},
\end{align*}
where we can choose $\delta_n\rightarrow 0$ slowly enough to have $\P(A_n)\ge 1-o(n^{-1})$ by Theorems 3.1 and 3.5 of \cite{makowiec2024local}.

Kirchhoff’s Effective Resistance Formula, described in Section 4.2 of \cite{PTN}, gives us that
\begin{align*}
    \P\left((v,w)\in \wst_n^{\beta_n}\right)=c(v, w)\mathcal{R}_{\mathrm{eff}}(v\leftrightarrow w).
\end{align*}

For any label configuration $\mathbf{u}:=\{u_e\}_{e\in E(K_n)}$ from $A_n$, for $\mathbf{E}_{\mathbf{u}}[\cdot]:=\E[\cdot|U_e=u_e \forall e\in E(K_n)]$,
\begin{align*}
\mathbf{E}_{\mathbf{u}}\left[L\left(\wst_n^{\beta_n}\right)\right]&=\sum_{(v,w)\in E(K_n)} \mathbf{E}_{\mathbf{u}}\left[U_{v,w}c(v, w)\mathcal{R}_{\mathrm{eff}}(v\leftrightarrow w)\right]=\binom{n}{2}\E\left[U_fe^{-\beta_nU_f}\right]\frac{(1+o(1))2\beta_n}{(1-e^{-\beta_n})n}\\
&=(1+o(1))\frac{n^2}{2}\frac{1-\beta_ne^{-\beta_n}+e^{-\beta_n}}{\beta_n^2}\frac{2\beta_n}{(1-e^{-\beta_n})n}, 
\end{align*}
giving us that
$$\E\left[L\left(\wst_n^{\beta_n}\right)\right]=\P(A_n^c)O(n)+\P(A_n)\E\left[L\left(\wst_n^{\beta_n}\right)\Given A_n\right]=(1+o(1))\frac{(1-\beta_ne^{-\beta_n}+e^{-\beta_n})n}{(1-e^{-\beta_n})\beta_n},$$
since $|L\left(\wst_n^{\beta_n}\right)|\le n-1$ and $\P(A_n^c)=o(n^{-1})$.
\end{proof}

\bibliographystyle{alpha}
\addcontentsline{toc}{section}{References}

\end{document}